\renewrobustcmd*{\bibinitdelim}{\,}
\title{Stallings automata for free-times-abelian groups: intersections and index
}
\author{Jordi Delgado$^*$ and Enric Ventura$^\dag$\\[7pt]
\emph{Dedicated to the memory of our late colleague and friend Paul Schupp (1937--2022)}}
\date{\vspace{-7pt}
    $^*$Departamento de Matemáticas, Universidad del País Vasco\\%
    $^\dag$Departament de Matemàtiques, Universitat Politècnica de Catalunya\\[3ex]%
    \today
}
\newcommand{\Addresses}{{
  \bigskip
  \footnotesize

  Jordi Delgado\\\nopagebreak
  \textsc{Departamento de Matemáticas, Universidad del País Vasco, España}\\\nopagebreak
  \url{jdelgado@crm.cat}

  \medskip

  Enric Ventura\\\nopagebreak
  \textsc{Departament de Matemàtiques, Universitat Politècnica de Catalunya, Spain}\\\nopagebreak
  \url{enric.ventura@upc.edu }

}}
\begin{document}

\maketitle

\begin{abstract}
\noindent We extend the classical Stallings theory (describing subgroups of free groups as automata) to direct products of free and abelian groups: after introducing \defin{enriched automata} (\ie automata with extra abelian labels), we obtain an explicit bijection between subgroups and a certain type of such enriched automata, which
--- as it happens in the free group ---
is computable in the finitely generated case.

\medskip
\noindent
This approach provides a neat geometric description of (even non finitely generated) intersections of finitely generated subgroups within this non-Howson family. In particular, we give a geometric solution to the subgroup intersection problem and the finite index problem, providing recursive bases and transversals, respectively.
\end{abstract}

\bigskip

\textsc{Keywords}: free group, free-abelian group, direct product, subgroup, intersection,
Stallings, automata.

\textsc{Mathematics Subject Classification 2010}: 20E05, 20E22, 20F05, 20F10.

\vspace{15pt}
\section{Introduction}

Stallings automata constitute the main modern tool to understand and work with the lattice of subgroups of a free group $\Free[X]$ (usually assumed to have finite rank $n = \card X$, denoted by $\Fn$). In the seminal paper~\cite{StallingsTopologyFiniteGraphs1983}, \citeauthor{StallingsTopologyFiniteGraphs1983} used a topological approach to construct a natural and algorithmic-friendly bijection $H\leftrightarrow \stallings{H,X}$ between subgroups of $\Free[X]$ and certain kind of $X$-automata;
see~\cite{KapovichStallingsFoldingsSubgroups2002, BartholdiRationalSubsetsGroups2010,
Delgado_list_2022}
for a more combinatorial approach closer to ours. This bijection has proved to be very useful to obtain modern solutions to both classical and new problems regarding subgroups of the free group. First easy applications are
computability of bases of finitely generated subgroups,
the solution to the membership problem in $\Fn$,
and description of finite index subgroups. Specially relevant to us is the product (or pull-back) technique, which makes it possible to construct $\stallings{H\cap K,X}$  from $\stallings{H,X}$ and $\stallings{K,X}$; this immediately implies that $\Fn$ is Howson, and allows to describe (compute a basis of) $H\cap K$ in terms of (bases of) $H$ and $K$. More recent applications of this fruitful theory are included in~\cite{
MiasnikovAlgebraicExtensionsFree2007,
SilvaAlgorithmDecideWhether2008,
PuderPrimitiveWordsFree2014,
DelgadoLatticeSubgroupsFree2020,
MargolisClosedSubgroupsProV2001,
RoigComplexityWhiteheadMinimization2007,
VenturaFixedSubgroupsMaximal1997}.
See \cite{Delgado_list_2022} for a recent survey on applications of Stallings automata.

This geometric approach constituted the seed for many successful attempts at generalization; see for example \cite{
KharlampovichStallingsGraphsQuasiconvex2017,
KapovichFoldingsGraphsGroups2005,
IvanovIntersectionFinitelyGenerated1999,
IvanovIntersectingFreeSubgroups2001,
DelgadoIntersectionProblemDroms2018,
SilvaFiniteAutomataSchreier2016}. In this paper, we present a generalization into another direction, namely to direct products of finitely generated free and abelian groups, \ie groups of the form $\FTA = \Fta$. This is part of a more ambitious project started in \cite[Chapter~5]{DelgadoExtensionsFreeGroups2017}, continued in \cite{delgado_Stallings_ABF}, and aiming at the much more general class of semidirect extensions of free groups, \ie groups of the form $\Fn \ltimes_A G$. We restrict ourselves to free-times-abelian groups (\ie $G$ finitely generated abelian and $A$ trivial). The more involved theory for the general semidirect scenario is in progress and will appear published in the near future; see~\cite{delgado_Stallings_free_ext}.
The theory of intersections of free-times-abelian groups, in turn, have a natural continuation in \cite{delgado_configurations_2021} (where the possible configurations of multiple intersections are studied) and \cite{delgado_universal_quotients} (where the results in \cite{delgado_configurations_2021} are used to obtain groups which, in some sense, admit a structure of quotients as complicated as possible).

So, we revisit the family of free-times-abelian groups (already considered by the same authors in \cite{DelgadoAlgorithmicProblemsFreeabelian2013})
now from a geometric point of view.
This approach provides new insight into the properties and behavior of subgroups that refines and clarifies some known results in the finitely generated realm,  and extends into the non finitely generated one.
The main idea is to suitably enrich classical Stallings automata with abelian labels
to make them expressive enough to represent every subgroup of $
\FTA$,
and flexible enough to make this representation unique (and algorithmic when restricted to finitely generated subgroups).
With this bijection at hand, we
interpret the notion of basis (for subgroups of $\FTA$), and geometrically rephrase the solution to the membership problem
 $\SMP(\FTA)$
given in~\cite{DelgadoAlgorithmicProblemsFreeabelian2013}. Then, we go on to analyze intersections; note that this must be more complicated than just computing products (of the corresponding enriched automata) since $\FTA$ is not a Howson group in general, whereas products of finite objects are again finite. Our approach makes it possible to geometrically understand arbitrary intersections of subgroups of $\FTA$ as (a certain technical variation of) Cayley digraphs of abelian groups. Moreover, when the intersecting subgroups
are finitely generated,
the obtained description is fully algorithmic and leads to a clean alternative proof 
for the solvability of the subgroup intersection problem $\SIP(\FTA)$; see \Cref{def: Howson group}.

\medskip

In~\Cref{sec: free-times-abelian groups} we introduce the family of free-times-abelian groups ($\FTA = \Fta$) together with some related terminology and notation. It turns out that this naive-looking family hides interesting features that translate into non-trivial problems; see \cite{carvalho_dynamics_FTA_2020,
DelgadoAlgorithmicProblemsFreeabelian2013, RoyFixedSubgroupsComputation2019,
RoyDegreesCompressionInertia2019, 
DelgadoRelativeOrderSpectrum2022,
delgado_configurations_2021}.

In \Cref{sec: enriched automata} we start by briefly surveying the classical Stallings theory for subgroups of the free group, to then introduce and study enriched automata (restricted to the free-times-abelian case). This leads to the classification \Cref{thm: enriched Stallings bijection}, which we use to derive first applications, such as the solvability of the membership problem and the computability of bases.

In \Cref{sec: Intersections FTA} we consider intersections of subgroups. After reviewing the classical pull-back technique for the free group, we develop the theory of enriched products to study intersections of subgroups in $\FTA$. The first important result is \Cref{thm: Stallings = Cayley} where we establish the relation between subgroup intersections and Cayley digraphs of abelian groups. Then, we focus on the algorithmic description of the intersection, which is summarized in \Cref{thm: enriched Stallings computable} and has two notable consequences:  a geometric proof of the solvability of the intersection problem $\SIP(\FTA)$, and the denial of any possible extension of the celebrated Hanna Neumann conjecture to any group containing $\Free[2] \times \ZZ$. Finally, we use a topological argument to extend the above ideas to non finitely generated intersections; this leads to \Cref{thm: enriched Stallings r.e.} providing a geometric description of arbitrary intersections within $\FTA$.

In \Cref{sec: index} we use these results to deduce a neat description of the cosets and index of a given finitely generated subgroup $\HH \leqslant \FTA$, which turns out to be transparently encoded in the enriched Stallings automata for $\HH$; see~\Cref{prop: index of HH}. A geometric solution for the finite index problem $\FIP(\FTA)$ and a description of a recursive transversal set easily follow.

Finally, in \Cref{sec: examples}, we provide some examples highlighting the most relevant aspects of our geometric construction.

\medskip
\goodbreak


We use lowercase boldface Latin font to denote abelian elements (${\vect{a},\vect{b}, \vect{c},\ldots}$), and
uppercase boldface Latin font to denote matrices with integer entries ($\matr{A},\matr{B},\matr{C},\ldots $). Capitalized calligraphic font is used to denote subgroups ($\HH,\mathcal{K},\mathcal{L},\ldots
$)
and subsets
($\mathcal{S},\mathcal{R}, \mathcal{T}, \ldots 
$) of $\FTA$, in contrast with  the corresponding objects in the factors, denoted by $H,K,L,\ldots$ and $R,S,T,\ldots$ respectively. Furthermore,
homomorphisms and matrices are assumed to act on the right;
that is, we denote by $(x)\varphi$ (or simply $x \varphi$) the image of the element $x$ by the homomorphism $\varphi$, and we denote by  $\varphi \psi$ the composition
\smash{$A \xto{\varphi} B \xto{\psi} C$}.
Accordingly, the image of the homomorphism associated to a matrix $\matr{A}$ is the \emph{row space} of $\matr{A}$, denoted by $\gen{\matr{A}}$.
Finally, we shall use the symbol $\infty$ to denote the \emph{countable} infinity.

\section{Free-times-abelian groups} \label{sec: free-times-abelian groups}

According to a very well-known classification theorem, any finitely generated abelian group is isomorphic to
\begin{equation*}
\label{eq: class fg abelian groups} \mathbb Z^{m'} \times (\mathbb Z/d_1\mathbb Z) \times \cdots \times (\mathbb Z /d_{m''}\mathbb Z) \,,
\end{equation*}
where $ m' , m'', d_1,\ldots ,d_{m''}$ are non-negative integers satisfying $2 \leq d_1 \,|\, d_2 \,|\, \cdots \,|\,d_{m''}$. We can think the elements of such a group as integral vectors of length $m=m'+m''$ whose $(m'+i)$-th coordinate works modulo $d_i$, for $i=1,\ldots ,m''$. For this reason, and assuming the list $d_1, \ldots ,d_{m''}$ of torsion orders fixed all along the  paper, we shall denote this abelian group simply as~$\AA$. We shall slightly abuse language and call an \defin{abelian basis} of $\AA$ any set of generators of the smallest possible cardinal, namely $m$.

We shall be interested in direct products of finitely generated free and abelian groups, namely groups of the form $\FTA = \Fta$.  The group $\FTA$ being non-abelian, it will be convenient to admit both additive and multiplicative notation for the elements in~$\AA \leqslant \FTA$; to this end, consider the standard presentation \begin{equation*}\label{eq: pres ATF}
\FTA
\,=\,
\Fta
\,=\,
\Pres{\!\!\begin{array}{c} x_1, \ldots ,x_n\\t_1, \ldots ,t_m
\end{array}} {\!\!
\begin{array}{ll}
t_i \, x_k = x_k \, t_i & \forall i\in [1,m],
\forall k\in [1,n] \!\!\\
t_{i} t_{j} = t_{j} t_{i} & \forall i,j \in [1,m] \\
(t_{m'+i})^{d_i}=1 & \forall i\in [1, m'']
\end{array} },
 \end{equation*}
and let us abbreviate their element normal forms $w(x)t_1^{a_1} t_2^{a_2} \cdots t_{m}^{a_m}$ just as $\fta{w}{a}$, where $\vect{a}=(a_1,\, \ldots ,\, a_m)\in \AA$, and $t$ is a formal symbol serving only as a pillar to hold the vector $\vect{a}$ up in the exponent. This way, the operation in $\FTA$ is given by $(\fta{u}{a})(\fta{v}{b})=\fta{uv}{a+b}$ in multiplicative notation, while the abelian part works additively, as usual, up in the exponent. In particular, the trivial element is $t^{(0,\ldots ,0)} \!=t^{\vect{0}}$, and  $t_i=t^{\vect{e_i}}$, where $\vect{e_i}=(0,\ldots ,1,\ldots ,0)$, $i=1,\ldots,m$, are the vectors in the canonical basis of $\AA$. We extend this notation to subsets $S \subseteq \AA \leqslant \FTA$, which are denoted by $\T^{S}$. For an element in normal form $\fta{w}{a}$, $w\in \Free_n$ is called its \emph{free part}, and the vector $\vect{a}\in \AA$ its \emph{abelian part}.

Note that the group $\FTA$ fits in the middle of the natural splitting short exact sequence
 \begin{equation}\label{eq: ses FTA}
\trivial \ \To \ \AA \ \xto{\,\iota\ } \ \FTA \ \xto{\prF} \ \Fn \ \To \ \trivial \ ,
 \end{equation}
where $\iota $ is the inclusion map, and $\prF$ is the projection to the free part $\fta{w}{a} \mapsto w$. The groups of this form are called \emph{free-times-abelian} and are the main object of study in the present paper.
It is straightforward to see that any subgroup $\HH \leqslant \FTA$ is again free-times-abelian; concretely, the restriction of \eqref{eq: ses FTA} to $\HH$ again gives a splitting (since $\HH\prF \leqslant \Fn$ is free) short exact sequence
 \begin{equation*}\label{eq: ses subgroups FTA}
\trivial \ \To \ \HH \cap \AA \ \xto{\,\iota\ } \ \HH
\ \xto{\pi_{\mid \HH}} \
\HH \prF \ \To \ \trivial \,,
 \end{equation*}
and it easily follows that
\begin{equation} \label{eq: subgroup decomposition}
\HH
\,=\, \HH \prF \sigma \times (\HH \cap \Zb^{m})
\,\isom \, \HH \prF \times (\HH \cap \Zb^{m}) \, ,
\end{equation}
where $\sigma$ is a (any) splitting of $\pi_{\mid \HH}$. Therefore, any subgroup $\HH \leqslant \FTA$ is iso\-mor\-phic to $\Free_{n'}\times A$, where $n'\in \NN\cup \{\infty\}$ and $A$ is a subgroup of $\AA$ (and so again finitely generated abelian, with a possibly different sequence of torsion orders). The claim below follows immediately and will become important in \Cref{sec: Intersections FTA}.

\begin{cor} \label{cor: H fg iff Hpi fg}
A subgroup $\HH \leqslant \Fta$ is finitely generated if and only if its projection $\HH \prF$ to the free part is finitely generated; otherwise, it is countably generated. \qed
\end{cor}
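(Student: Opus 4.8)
The plan is to read everything off the decomposition \eqref{eq: subgroup decomposition}, namely $\HH \isom \HH \prF \times (\HH \cap \AA)$, together with two standard facts. First, the abelian factor $\HH \cap \AA$ is a subgroup of the finitely generated abelian group $\AA$, hence is itself finitely generated (finitely generated abelian groups are Noetherian); in fact, as already observed right after \eqref{eq: subgroup decomposition}, it is again of the form \eqref{eq: class fg abelian groups}. Second, by Nielsen--Schreier the free factor $\HH \prF \leqslant \Fn$ is free of some rank $n' \in \NN \cup \{\infty\}$, and it is finitely generated precisely when $n' < \infty$.

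Next I would invoke the elementary fact that a direct product $G_1 \times G_2$ is finitely generated if and only if both factors are: a finite generating set of the product is obtained by joining finite generating sets of $G_1$ and $G_2$, while conversely each factor is a retract (in particular, a homomorphic image) of $G_1 \times G_2$, and homomorphic images of finitely generated groups are finitely generated. Applying this to \eqref{eq: subgroup decomposition}, and using that the abelian factor $\HH \cap \AA$ is always finitely generated, we conclude that $\HH$ is finitely generated if and only if $\HH \prF$ is, which is the claimed equivalence.

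For the remaining clause, assume $\HH \prF$ is not finitely generated. As a subgroup of the countable group $\Fn$ it is itself countable, hence free of (at most) countable rank, and therefore countably generated; concatenating a countable generating set of a lift $\HH \prF \sigma$ with a finite generating set of $\HH \cap \AA$ and using \eqref{eq: subgroup decomposition} exhibits $\HH$ as countably generated.

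I do not expect any genuine obstacle here: the statement is an immediate consequence of the decomposition \eqref{eq: subgroup decomposition} and the Noetherianity of finitely generated abelian groups. The only mild point to keep in mind is that ``$\HH \prF$ not finitely generated'' automatically forces its rank to be exactly countably infinite (and not larger), since $\Fn$ --- and hence every subgroup of it --- is countable.
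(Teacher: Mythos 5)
Your argument is correct and is exactly the route the paper intends: the corollary is stated as an immediate consequence of the decomposition \eqref{eq: subgroup decomposition}, and your proof simply spells out that immediacy (finitely generated abelian factor, product finitely generated iff both factors are, countable rank of non-finitely-generated subgroups of $\Fn$). No issues to report.
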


It is also obvious from \eqref{eq: subgroup decomposition} that
   \begin{equation*} \label{eq: rank subgroup}
   \rk (\HH)
   \,=\,
    \rk (\HH  \prF ) + \rk (\HH \cap \AA) \,.
   \end{equation*}
Taking respective basis for each factor
we reach our notion of basis for a subgroup of~$\FTA$.
\begin{defn}
A \defin{basis} of a subgroup $\HH \leqslant \Fta$ is a set of generators of $\HH$ of the form $\set{\fta{u_1}{a_1},\ldots ,\fta{u_p}{a_p};\fta{}{b_1},\ldots ,\fta{}{b_q}}$, where $\vect{a_1},\ldots ,\vect{a_p}\in \AA$, $\set{ u_1,\ldots ,u_p}$ is a free basis of~$H\prF$, and $\set{\vect{b_1},\ldots ,\vect{b_q}}$ is an abelian basis of $L_{\HH}=\HH\cap \AA$ (note that $\HH$ is finitely generated if and only if $p<\infty$). To avoid confusion, we reserve the word \defin{basis} for $\Fta$, in contrast with the terms \defin{abelian basis} and \defin{free basis} for the corresponding concepts in the abelian and free contexts, respectively.
\end{defn}

\begin{defn}
Given a subgroup $\HH \leqslant \FTA$ and an element $w \in \Fn$ we define the
\defin{(abelian) completion of $w$ in $\HH$} to be
$\cab{w}{\HH}=\set{\, \vect{a} \in \AA \st \fta{w}{a} \in \HH \,}$. We also say that $\vect{a}$ is \emph{a} \defin{completion} of $w$ in~$\HH$ if $\vect{a} \in \cab{w}{\HH}$.
\end{defn}

\begin{lem}~\label{lem: completion is a coset}
The completion $\cab{u}{\HH}$ is non-empty if and only if $u\in \HH \prF$ and, in this case, it is a coset of $L_{\HH} \coloneqq \HH \cap \AA$ in~$\AA$. In particular, if $\set{\fta{u_1}{a_1},\ldots, \fta{u_p}{a_p};\ta{b_1},\ldots,\ta{b_q}}$ is a basis for $\HH \leqslant \FTA$ and $w \in \Fn$, then
\begin{equation}\label{eq: 8}
\cab{w}{\HH} \,=\, \left\{\!
\begin{array}{ll}
\varnothing  & \text{if } w \notin \HH \prF \\
\bm{\omega} \matr{A} + L_{\HH} &  \text{if } w \in \HH \prF \ ,
\end{array}
\right.
\end{equation}
where $\matr{A}$ is the $p\times m$ matrix having $\vect{a_i}$ as $i$-th row, $L_{\HH}=\gen{\vect{b_1}, \ldots ,\vect{b_q}}\leqslant \AA$, and $\bm{\omega}=w \phi \rho$ is the abelianization of the expression of $w$ in base $\set{u_1, \ldots , u_p}$; that is, $\phi$ is the change of basis $\HH \prF \ni w \mapsto \omega$, where $w = \omega(u_1,\ldots,u_p)$, and $\rho$ is the abelianization $\Free[\set{u_1,\ldots,u_p}] \isom \Free[p] \onto \ZZ^{p}$; see~\Cref{fig: abelian completion diagram}.
\begin{figure}[H]
\centering
\begin{tikzcd}[row sep=0pt, column sep=25pt,ampersand replacement=\&]
   \Free[n] \, \geqslant \&[-30pt] \HH\prF \arrow[r,>->>,"\phi"] \& \Free[p]  \arrow[r,->>,"\rho"] \& \ZZ^p \arrow[r,->>,"\matr{A}"] \& \AA \arrow[r,"/L_{\HH}"] \& \AA\! / L_{\HH}\\
   \& w  \arrow[r,|->]\& \omega  \arrow[r,|->]\& \bm{\omega} \arrow[r,|->]
   \& \bm{\omega} \matr{A} \arrow[r,|->]
   \& \bm{\omega} \matr{A} + L_{\HH} \&[-30pt]  = \cab{w}{\HH} \, .
   \end{tikzcd}
   \caption{Completion diagram}
   \label{fig: abelian completion diagram}
\end{figure}
\end{lem}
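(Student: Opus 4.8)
The plan is to treat the two assertions in turn: the non\-emptiness criterion together with the coset structure of $\cab{u}{\HH}$ is a soft consequence of the normal form and the multiplication rule in $\FTA$, and the explicit formula \eqref{eq: 8} then follows by exhibiting one distinguished completion of $w$ and invoking the first assertion.

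For the first assertion I would argue directly. If $\vect{a} \in \cab{u}{\HH}$ then $\fta{u}{a} \in \HH$ and projecting gives $u = (\fta{u}{a})\prF \in \HH\prF$; conversely, if $u \in \HH\prF$ then by the very definition of $\prF$ there is some $\fta{u}{a} \in \HH$, so $\cab{u}{\HH} \neq \varnothing$. Assuming now $\cab{u}{\HH} \neq \varnothing$, fix $\vect{a_0}$ in it. For an arbitrary $\vect{a} \in \AA$ one computes $(\fta{u}{a_0})^{-1}(\fta{u}{a}) = \fta{}{a - a_0} = t^{\vect{a}-\vect{a_0}}$; since $\fta{u}{a_0} \in \HH$, this shows $\fta{u}{a} \in \HH$ if and only if $t^{\vect{a}-\vect{a_0}} \in \HH$, and as $t^{\vect{a}-\vect{a_0}} \in \AA$ the latter is equivalent to $\vect{a} - \vect{a_0} \in \HH \cap \AA = L_{\HH}$. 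Hence $\cab{u}{\HH} = \vect{a_0} + L_{\HH}$, a coset of $L_{\HH}$ in $\AA$.

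For the explicit formula, the case $w \notin \HH\prF$ is exactly the first assertion. So suppose $w \in \HH\prF$. Since $\set{u_1,\dots,u_p}$ is a free-basis of $\HH\prF$, the assignment $u_i \mapsto \fta{u_i}{a_i}$ extends to a well-defined homomorphism $\sigma\colon \HH\prF \to \HH$ (a splitting of $\pi_{\mid \HH}$). Composing $\sigma$ with the ``abelian part'' map produces a map $\tau\colon \HH\prF \to \AA$ which is a \emph{homomorphism} precisely because the product in $\FTA$ adds abelian parts, $(\fta{u}{c})(\fta{v}{d}) = \fta{uv}{c+d}$; in particular the abelian coordinate is unaffected by the free-part cancellations that reduce the word $\omega(u_1,\dots,u_p)$ to $w$ in $\Fn$. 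Being valued in the abelian group $\AA$, $\tau$ factors through the abelianization of $\HH\prF$, namely as $\HH\prF \xto{\phi} \Free[p] \xto{\rho} \ZZ^p \xto{\matr{A}} \AA$, where $\matr{A}\colon \ZZ^p \to \AA$ is the linear map sending $\vect{e_i} \mapsto \vect{a_i}$, i.e. right multiplication by the $p\times m$ matrix with rows $\vect{a_i}$; this is exactly the chain of maps in \Cref{fig: abelian completion diagram}. Therefore $w\sigma = \fta{w}{\bm{\omega}\matr{A}}$ with $\bm{\omega} = w\phi\rho$, so $\bm{\omega}\matr{A}$ is a completion of $w$ in $\HH$, and the first assertion gives $\cab{w}{\HH} = \bm{\omega}\matr{A} + L_{\HH}$ with $L_{\HH} = \gen{\vect{b_1},\dots,\vect{b_q}}$ by the definition of basis.

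The argument is essentially bookkeeping along the diagram; the only point needing a moment's care — and it is a genuinely elementary one — is checking that $\tau$ is well defined and is computed by exponent sums, i.e. that passing to abelian parts is compatible with the (non\-abelian) multiplication of $\FTA$ and descends to the abelianization of $\HH\prF$. I do not foresee a real obstacle beyond this.
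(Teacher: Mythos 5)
Your proposal is correct, and it follows exactly the route the paper intends: the paper states this lemma without a displayed proof, leaving it as a consequence of the preceding discussion (the restricted splitting short exact sequence \eqref{eq: ses subgroups FTA} and the decomposition \eqref{eq: subgroup decomposition}), and your argument — projecting to get the nonemptiness criterion, differencing two completions to get the coset structure, and using the splitting $u_i \mapsto \fta{u_i}{a_i}$ together with the fact that the abelian-part map is a homomorphism factoring through the abelianization to produce the distinguished completion $\bm{\omega}\matr{A}$ — is precisely that reasoning made explicit. No gaps.
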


An immediate consequence of the above discussion is the following useful equivalence.

\begin{rem}
Let $\fta{w}{a} \in \FTA$ and
$\HH \leqslant \FTA$
with basis
$\set{\fta{u_1}{a_1},\ldots, \fta{u_p}{a_p};\ta{b_1},\ldots,\ta{b_q}}$, then
\begin{equation*}
    \fta{w}{a} \in \HH
    \,\Biimp\,
    w \in \HH \prF
    \text{\, and \,}
    \vect{a} \in  w \phi \rho \matr{A} + L_{\HH}
\end{equation*}
\end{rem}

\begin{rem}
The natural extension of Lemma~\ref{lem: completion is a coset} works as well for non finitely generated subgroups $\HH \leqslant \Fta$. In this case, $p=\infty$, a basis for $\HH$ looks like $\set{\fta{u_1}{a_1},\ldots ;\ta{b_1},\ldots,\ta{b_q}}$, and~\cref{eq: 8} is true as written, understanding that $\matr{A}$ is an integral matrix with countably many rows and $m$ columns, and that $\ZZ^\infty$ means $\bigoplus_{i=1}^{\infty} \ZZ$. Note that, then, $\bm{\omega}$ is a row vector with countably many coordinates, all but finitely many of them being $0$; so, the product $\bm{\omega} \matr{A}$ still makes sense with the usual meaning.
\end{rem}

\section{Enriched automata} \label{sec: enriched automata}

In this section we briefly survey the basics on the classical Stallings automata, to then develop our enriched theory (restricted to free-times-abelian groups $\Fta$,
see \cite{DelgadoExtensionsFreeGroups2017,delgado_Stallings_free_ext} for a more general and detailed account, including the case of semidirect products).

This geometric approach dates back to the 1980's, with the ideas of Serre, Stallings and others (see \cites{SerreTrees1980, StallingsTopologyFiniteGraphs1983}) interpreting the subgroups of the free group $\Free[n] =\pres{X}{-}$ as covering spaces of the bouquet of $n$ circles. This topological viewpoint was later reformulated in a more combinatorial way in terms of pointed $X$-automata --- that is, digraphs labeled by letters in $X$ with a distinguished (initial and terminal) vertex --- and can be summarized in \Cref{thm: Stallings bijection};
see \cite{KapovichStallingsFoldingsSubgroups2002, BartholdiRationalSubsetsGroups2010} for details and proofs. The precise notion of automaton used in this context is stated below.

The \defin{involutive closure} of a set $X$ (usually understood as an alphabet) is the disjoint union $X^{\pm} \coloneqq X \sqcup X^{-1}$, where $X^{-1} \coloneqq \set{x^{-1} \st x\in X}$ is the set of \defin{formal inverses} of $X$.

\begin{defn} \label{def: involutive automaton}
        Let $X$ be a set.
        By an \defin{(involutive and pointed) $X$-automaton}
        $\Ati$
        we mean a
        $X^{\pm}$-labeled digraph
      such that
      for every arc
        $\smash{\edgi \equiv \verti \xarc{\,x\ } \vertii}$ 
        (reading $x \in X^{\pm}$) there exists a unique (\defin{inverse}) arc
        $\smash{\edgi^{-1} \equiv \verti \xcra{\,x^{-1}} \vertii}$
        (reading $x^{-1})$, with
       a distinguished vertex $\bp$ called the \defin{basepoint} of~$\Ati$ (which acts as the unique initial and accepting vertex for $\Ati$).     
\end{defn}


If $\edgi \equiv \verti \xarc{x\,} \vertii$ is an arc in $\Ati$, then we say
that $\verti$ and $\vertii$ are respectively the
\defin{initial vertex} or \defin{origin}  of $\edgi$ (denoted by $\init \edgi$), and the \defin{terminal vertex} or \defin{end} of $\edgi$ (denoted by $\term \edgi$); and that $x$ is the label of $\edgi$, denoted by $\lab_{X}(\edgi)$.
We also say that the vertices $\verti,\vertii$ are \defin{adjacent}, and that the arc
$\edgi$ is \defin{incident to}
both $\verti$ and $\vertii$.
The sets of vertices and arcs of $\Ati$ are denoted by $\Verts \Ati$ and $\Edgs \Ati$ respectively.
An involutive $X$-automaton
is said to be \defin{saturated}
(or \defin{complete})
 if every vertex is the origin of an $x$-arc, for every~$x\in X^{\pm}$.
 
A \defin{walk}
in an automaton $\Ati$
is a finite alternating sequence
   $\walki = \verti_0 \edgi_1 \verti_1 \cdots \edgi_{n} \verti_{n}$ such that
   $\init \edgi_i = \verti_{i-1}$ and 
   $\term \edgi_i = \verti_i$, for $i \in [1,n]$.
   If $\verti_0 = \verti_n$ we say that $\walki$ is a (closed) \defin{$\verti_0$-walk}.
   The \defin{length of a walk} is the number of arcs in the sequence. Walks of length $0$ correspond precisely to the vertices in~$\Ati$.
   A walk is said to exhibit \defin{backtracking} if it has two consecutive arcs which are inverses of each other, and is called \defin{reduced} otherwise.

The \defin{label}
(\resp \defin{free label})
of a walk $\walki$ is the element in $(X^{\pm})^*$
(\resp in $\Free[X]$)
given
(\resp represented)
by the sequence of labels in the arcs of~$\walki$,
assumed to be the empty string $\emptyword$
(\resp the trivial element $\trivial$)
if the walk
consists of just a vertex.
It is easy to see that the set of free labels of $\bp$-walks in an involutive $X$-automaton $\Ati$ is a subgroup of $\Free[X]$. It is called the \defin{subgroup recognized} by $\Ati$, denoted by~$\gen{\Ati}$.

We denote by $\Edgs^{+}(\Ati)$ the subset of arcs in $\Ati$ labeled by elements in $X$ (which we call the \defin{positive arcs} of $\Ati$).
Note that we can represent involutive automata using only the positive arcs of $\Ati$ 
(this is called the \defin{positive part} of $\Ati$),
with the convention that every 
$x$-arc $\edgi$ can also be crossed backwards, reading $x^{-1}$ (corresponding to the hidden
inverse arc $\edgi^{-1}$).
Unless stated otherwise, the automata appearing throughout the paper will be assumed to be pointed and involutive. We will refer to them simply as automata.
Note that if, in an involutive automaton $\Ati$, we identify mutually inverse arcs
and ignore the labeling and basepoint, then we obtain an undirected multigraph, which we call the \defin{underlying (undirected) graph}
of~$\Ati$.

If a graph $\Gri$ can be obtained by identifying a vertex of some graph $\Grii$ with a vertex of some disjoint non-trivial tree $\Treei$,
then we say that $\Treei$ is a \defin{hanging tree} of $\Gri$.
A hanging tree is \defin{maximal} if it is not contained in any other hanging tree. Both notions extend naturally to involutive automata via the corresponding  underlying graphs.

\begin{defn}
An $X$-automaton is said to be \defin{deterministic} if
no two arcs with the same label depart from
(or arrive at)
the same vertex;
and
\defin{core} if every vertex appears in some reduced \mbox{$\bp$-walk}. Note that being core is equivalent to being connected and having no hanging trees not containing the basepoint.
The \defin{core} of an automaton $\Ati$,
denoted by $\core (\Ati)$,
is the maximum core subautomaton of $\Ati$,
\ie the automaton obtained after taking the basepoint component of $\Ati$ and removing from it all the hanging trees not containing the basepoint.
Note that $\gen{\core (\Ati)} = \gen{\Ati}$.
Finally, an $X$-automaton is said to be \defin{reduced} if it is both deterministic and core.
\end{defn}

Important examples of pointed involutive automata
are Schreier and Stallings automata, which we define below.
\begin{defn}
Let $\Free$ be a free group with basis $\basis$
and let $H$ be a subgroup of $\Free$. The 
\defin{(right) Schreier automaton} of $H$ \wrt $\basis$,
denoted by $\schreier{H,\basis}$,
is the automaton with set of vertices
$H \backslash \Free$
(the set of right cosets of $H$);
an arc
$Hu \xarc{v\,} Huv$
(from $Hu$ to $Huv$ labeled by~$v$)
for every coset $Hu \in H \backslash \Free$
and every element $v \in \basis$;
and the coset $H$ as basepoint.
\end{defn}
Note that Schreier automata are always connected, deterministic, and saturated, but not necessarily core. The core of $\schreier{H,\basis}$ is a reduced (involutive and pointed) \mbox{$\basis$-automaton}, called the \defin{Stallings automaton} of~$H$ (\wrt $\basis$) and denoted by $\Stallings{H,\basis}$; that is,
$\stallings{H,\basis} = \core(\schreier{H,\basis})$.
Clearly, $\gen{\schreier{H,\basis}}=\gen{\Stallings{H,\basis}}=H$.
Note that both Schreier and Stallings automata
depend on the free basis chosen for the ambient group, \emph{and hence on the ambient group itself}. (We alert the reader that, throughout the paper, Stallings automata relative to different ambient groups and bases shall be considered for the same subgroup.)


\begin{thm}[\citenr{StallingsTopologyFiniteGraphs1983}]\label{thm: Stallings bijection}
Let $\Free[X]$ be a free group with basis $X$.
Then, the map
 \begin{equation}\label{eq: Stallings bijection}
\begin{array}{rcl}
\operatorname{St}\colon \set{\,\text{subgroups of } \Free[X] \,} & \leftrightarrow & \set{\,\text{(isomorphic classes of) reduced $X$-automata}\,} \\
H & \mapsto & \stallings{H,X} \coloneqq \core (\schreier{H,X})\\
\gen{\Ati} & \mapsfrom & \Ati
\end{array}
 \end{equation}
is a bijection. Furthermore, finitely generated subgroups correspond precisely to finite automata and, in this case, the bijection is algorithmic.
\end{thm}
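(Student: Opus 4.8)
The plan is to establish the two directions of the bijection separately, and then extract the ``furthermore'' part from an explicit folding construction. I would begin with the easy direction: that $\operatorname{St}$ is well defined and that $\gen{\stallings{H,X}}=H$ for every $H\leqslant\Free[X]$. Well-definedness is immediate, since $\schreier{H,X}$ is connected, deterministic and saturated, and passing to the core preserves determinism while enforcing coreness, so $\stallings{H,X}$ is a reduced $X$-automaton. As for the recognized subgroup, reading a word $w\in(X^{\pm})^{*}$ from the basepoint $H$ of $\schreier{H,X}$ ends at the coset $Hw$, so $w$ labels a $\bp$-walk precisely when $Hw=H$, that is, when $w\in H$; since taking the core does not alter the set of free labels of $\bp$-walks (the identity $\gen{\core(\Ati)}=\gen{\Ati}$ recorded in the excerpt), we get $\gen{\stallings{H,X}}=\gen{\schreier{H,X}}=H$. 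In particular $\operatorname{St}$ is injective, because $\stallings{H,X}\cong\stallings{H',X}$ forces $H=\gen{\stallings{H,X}}=\gen{\stallings{H',X}}=H'$.

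The substantial half is surjectivity, equivalently: for every reduced $X$-automaton $\Ati$ one has $\stallings{\gen{\Ati},X}\cong\Ati$ (this simultaneously shows the two displayed maps are mutually inverse). I would construct a label- and basepoint-preserving graph morphism $\Phi\colon\Ati\to\schreier{\gen{\Ati},X}$ sending a vertex $v$ to the coset $\gen{\Ati}\,\overline{w}$, where $w$ is the free label of any reduced $\bp$-to-$v$ walk in $\Ati$ (one exists because $\Ati$ is core). This uses the standard elementary lemmas on deterministic involutive automata: deleting a backtracking pair from a walk yields a walk with the same endpoints and free label; hence a reduced walk has a reduced label word; and from a fixed vertex a given reduced word is readable along at most one walk. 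Granting these, $\Phi$ is well defined (two reduced $\bp$-to-$v$ walks differ by a $\bp$-walk, so their free labels lie in one coset of $\gen{\Ati}$), it is a morphism, and it is injective on vertices: if $\gen{\Ati}\ell(p)=\gen{\Ati}\ell(q)$ for reduced walks $p\colon\bp\to v$ and $q\colon\bp\to v'$, then $\ell(p)\,\ell(q)^{-1}\in\gen{\Ati}$, so there is a closed $\bp$-walk with this free label; appending to it the walk $q$ (itself reduced, reading $\ell(q)$) gives a $\bp$-walk from $\bp$ to $v'$ with free label $\ell(p)$, whose reduction is the unique reduced $\bp$-walk reading $\overline{\ell(p)}$, namely $p$; hence $v=v'$. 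Determinism then upgrades this to injectivity on arcs, so $\Phi$ embeds $\Ati$ as a subautomaton of $\schreier{\gen{\Ati},X}$. It remains to identify its image: images of reduced $\bp$-walks of $\Ati$ are reduced $\bp$-walks of $\schreier{\gen{\Ati},X}$, so $\operatorname{im}\Phi\subseteq\stallings{\gen{\Ati},X}$; conversely, each vertex of $\stallings{\gen{\Ati},X}$ is a coset $\gen{\Ati}u$ with $u$ a prefix of some reduced $z\in\gen{\Ati}$, and tracing in $\Ati$ the reduced $\bp$-walk with free label $z$ up to that prefix yields a vertex mapping to $\gen{\Ati}u$. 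Hence $\Phi$ is an isomorphism onto $\stallings{\gen{\Ati},X}$, which gives surjectivity.

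For the ``furthermore'': a finite reduced automaton $\Ati$ recognizes a finitely generated subgroup, since a basis comes from any spanning tree with one generator per positive arc outside it (so in fact $\rk\gen{\Ati}=\card{\Edgs^{+}(\Ati)}-\card{\Verts\Ati}+1$); conversely, given a finite generating set $h_{1},\dots,h_{k}$ of $H$, one forms the finite automaton built from $k$ closed $\bp$-walks reading $h_{1},\dots,h_{k}$ (which recognizes $H$), then repeatedly performs \emph{Stallings foldings}---identifying two equally labelled arcs issuing from, or arriving at, a common vertex---and finally deletes the hanging trees not containing $\bp$. Each such step strictly decreases $\card{\Verts\Ati}+\card{\Edgs^{+}(\Ati)}$, so the process terminates with a finite reduced automaton recognizing $H$, which by the bijection already proved must be $\stallings{H,X}$. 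This folding procedure is the algorithm in one direction, and reading a basis off a spanning tree is the algorithm in the other.

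The step I expect to be the main obstacle is the surjectivity argument, and within it specifically proving that the image of $\Phi$ is \emph{exactly} the core of $\schreier{\gen{\Ati},X}$ (the other half, that $\Phi$ is a well-defined embedding, being a routine application of the folding lemmas). Everything else is either formal or reduces to the elementary combinatorics of foldings in deterministic involutive automata, with termination of the folding process guaranteed by the monotone size bound above.
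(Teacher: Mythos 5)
Your argument is correct, but it cannot be compared line-by-line with ``the paper's proof'' because the paper does not prove this statement at all: it records \Cref{thm: Stallings bijection} as the classical result of Stallings, pointing to \cite{KapovichStallingsFoldingsSubgroups2002, BartholdiRationalSubsetsGroups2010} for proofs, and only develops the algorithmic content in the surrounding text. Your ``furthermore'' part coincides with that textual discussion: the flower automaton on a finite generating set, foldings (which, as the paper also asserts without proof, do not change the recognized subgroup), termination by the strictly decreasing count of vertices plus positive arcs, and the spanning-tree petal basis in the converse direction. The structural half of your proposal --- injectivity from $\gen{\stallings{H,X}}=H$, and surjectivity via the label-preserving embedding $\Phi$ of a reduced automaton $\Ati$ into $\schreier{\gen{\Ati},X}$ with image exactly the core --- is the standard argument of the cited references, and you have the right supporting lemmas: in a deterministic involutive automaton walk reduction matches word reduction (so a reduced walk reads a reduced word, and conversely an unreduced label forces backtracking), and a reduced word is readable from a given vertex along at most one walk; these make $\Phi$ well defined, injective on vertices and hence (by determinism) on arcs, and identify its image with $\core(\schreier{\gen{\Ati},X})$, using coreness of $\Ati$ for one inclusion and, for the other, the fact that every reduced $z\in\gen{\Ati}$ is readable as a reduced $\bp$-walk in $\Ati$ whose prefixes hit every vertex (and arc --- you should say a word about arcs too, but the argument is identical) of the core. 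With those two small points made explicit, your proof is complete and is exactly the kind of argument the paper delegates to the literature.
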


To compute $\stallings{H,X}$ (given a \emph{finite} set of generators $S$ for $H$) we start by building the so-called \defin{flower automaton}  $\flower{S}$ of $S$, which is
obtained after identifying the basepoints of
the
(involutive)
petals spelling the generators in $S$, which we can assume to be reduced words.
Note that, by construction, $\flower{S}$ is core and recognizes $H$, but may fail to be deterministic at the basepoint.
To fix this, one can successively identify the possible arcs breaking determinism. It is clear that these identifications, called \defin{foldings}, do not change the recognized subgroup. Of course, a folding can produce new nondeterministic situations to be fixed, but since the number of arcs in the graph is finite, and decreases with each folding, the process finishes after a finite number of steps, producing as a result a deterministic $X$-automaton recognizing $H$. Moreover, since the folding process can only produce hanging trees containing the basepoint, the final object is still core, and hence a reduced $X$-automaton recognizing $H$. \Cref{thm: Stallings bijection} states that this resulting automaton must be precisely $\Stallings{H,X}$. Furthermore, the bijectivity of \eqref{eq: Stallings bijection} implies that the result of the folding process
depends neither on the order in which the foldings are performed, nor on the starting (finite) generating set taken for $H$,
but only on the subgroup~$H\leqslant \Free[X]$ itself.

For the opposite direction, suppose we are given a finite reduced $X$-automaton $\Ati$. Consider a spanning tree \!$\Treei$ of $\Ati$ and
denote by
\smash{
$\verti \xleadsto{_{\Treei}} \vertii$}
the unique reduced walk from a vertex $\verti$ to a vertex $\vertii$ using only arcs in $\Treei$;
and by
$\Tpetal{\edgi}$
the $\bp$-walk
\smash{$\bp \xleadsto{_{\scriptscriptstyle{T}}} \! \bullet \! \arc{\edgi} \!\bullet\! \xleadsto{_{\scriptscriptstyle{T}}} \bp$}, where ${\edgi\in \Edgs\Ati \setmin \Edgs \Treei}$. It is not difficult to see that the set $\basis_{\Treei} \coloneqq \set{ \lab_X(\Tpetal{\edgi}) \st \edgi\in \Edgs^{+} \Ati \setmin \Edgs \Treei \, }$ constitutes a free basis of the subgroup $\gen{\Ati} \leqslant \Free[X]$. We say that $\basis_{\Treei}$ is the \defin{(positive) $\Treei$-basis} of $\gen{\Ati}$, that the $\Tpetal{\edgi}$'s are the \defin{(positive) $\Treei$-petals},
and that the $\edgi$'s are the (\defin{positive cyclomatic}) \defin{$\Treei$-arcs} of~$\Ati$.


Since the Stallings automaton of any finitely generated subgroup $H\leqslant \Fn$ is computable, we can immediately  compute a basis for $H$ as described above, and decide membership for $H$ simply by checking whether the candidate reduced word $w\in \Fn$ labels a $\bp$-walk in~$\Stallings{H,X}$. Other well-known algorithmic applications of  \Cref{thm: Stallings bijection} include
the study of intersections (see~\Cref{sec: Intersections FTA}), and the description of finite index subgroups (see~\Cref{sec: index}). 
Also, the classical Nielsen-Schreier Theorem follows immediately: any subgroup $H \leqslant \Fn$ is the fundamental group of the underlying graph of $\stallings{H,X}$ and hence it is free.

\medskip

In \cite{DelgadoExtensionsFreeGroups2017} we developed a broader generalization of Stallings' techniques oriented towards extensions of the form $\Fn \ltimes \ZZ^m$, not yet available in published form. Below, we present this theory restricted to the case of free-times-abelian groups.
Our fundamental object is an extension of the $X$-automata used in the free case: we shall also admit abelian labels at the end and origin of every arc, and a subgroup of~$\AA$ labeling the basepoint of the automata. The precise definition follows.

\begin{defn} \label{def: enriched automaton}
A \defin{$\AA$\!-enriched $X$-automaton} (\defin{enriched automaton} for short) is a pointed involutive $(\AA \! \times \Geni \times \AA)$-automaton, with a subgroup of $\AA$ attached to the basepoint. In more detail, an enriched automaton $\Eti$ consists of:
\begin{enumerate}[ind]
\item an involutive pointed digraph $\dGri = (\Verts,\Edgs, \init, \term, \bp)$ (the \defin{underlying digraph} of $\Eti$);
\item an involtive arc-labeling $\vec{\lab} = (\lab_1,\lab_{X},\lab_{2}) \colon \Edgi \to \AA \! \times \Geni \times \AA$ (the \defin{enriched labeling} of $\Eti$); \ie
for every arc
$\smash{\edgi \equiv \verti \xarc{\ } \vertii}$ labeled by $(\vect{a},x, \vect{b})$ 
there exists a unique (\defin{inverse}) arc
$\smash{\edgi^{-1} \equiv \verti \xcra{\ } \vertii}$
labeled by $(-\vect{b},x^{-1}, -\vect{a})$.
\item a subgroup $L_{\Eti} \leqslant \AA $ attached to the basepoint $\bp$ of $\Eti$ (the \defin{basepoint subgroup} of~$\Eti$).
\end{enumerate}

The \defin{body} of an enriched automaton $\Eti$, denoted by $\body{\Eti}$, is the result of removing from $\Eti$ the basepoint subgroup;
whereas
the \defin{skeleton} of $\Eti$, denoted by $\sk (\Eti)$, is the result of removing all the abelian information
(\ie the basepoint subgroup and all the abelian labels)
from~$\Eti$.
 Note that $\sk (\Eti) = \sk (\body{\Eti})$ is a standard $X$-automaton.
An enriched $X$-automaton $\Ati$ is said to be \defin{deterministic} (\resp \defin{connected}, \defin{core}, \defin{reduced}) if its skeleton $\sk(\Ati)$ is so, and we define the core of an enriched automaton accordingly.
\end{defn}
If an arc
$\edgi \equiv \verti \xarc{\ } \vertii$ is labeled by $(\vect{a}, x,\vect{b})$ then we write 
\begin{tikzpicture}[baseline=-0.5ex,shorten >=1pt, node distance=.3cm and 1.5cm, on grid,>=stealth']
\node[] (p) {$\verti$};
\node[] (q) [right = 1.75 of p] {$\vertii$};
\node[] (e) [left = 0.5 of p]{$\edgi \equiv$};
\path[->]
        (p) edge[]
            node[pos=0.5,below=-.2mm] {$\displaystyle{x}$}
            node[pos=0.1,above=-.2mm] {$\displaystyle{\mathbf{a}}$}
            node[pos=0.8,above=-.2mm] {$\displaystyle{\mathbf{b}}$}
            (q);
\end{tikzpicture}
(with the first and second abelian labels at the beginning and end of the enriched arc, and the free label in the middle).
As in the free case, the idea is that the labeling (of the arcs) in an enriched automaton $\Eti$ extends to a $\FTA$-labeling on the walks (sequences of successively adjacent arcs) in $\Eti$. For enriched automata the rules are the following:
\begin{enumerate}
    \item Every arc  
\smash{
\begin{tikzpicture}[baseline=-0.5ex,shorten >=1pt, node distance=.3cm and 1.5cm, on grid,>=stealth']
\node[state] (p) {};
\node[state] (q) [right = 1.5 of p] {};
\path[->]
        (p) edge[]
            node[pos=0.5,below=-.2mm] {$\displaystyle{x_j}$}
            node[pos=0.1,above=-.2mm] {$\displaystyle{\mathbf{a}}$}
            node[pos=0.8,above=-.2mm] {$\displaystyle{\mathbf{b}}$}
            (q);
\end{tikzpicture}}
\,in $\Eti$ is meant to be read $\mathrm{t}^{-\vect{a}} \, \geni_{j} \, \mathrm{t}^{\vect{b}} =\geni_{j} \, \mathrm{t}^{\vect{b}-\vect{a}}$ when crossed forward (from left to right), and $\mathrm{t}^{-\vect{b}} \, \geni_{j}^{-1} \, \mathrm{t}^{\vect{a}} = \geni_{j}^{-1} \, \mathrm{t}^{\vect{a}-\vect{b}} =(\geni_{j} \, \mathrm{t}^{\vect{b}-\vect{a}})^{-1}$ when crossed backwards (from right to left).
\item Successive arcs in a walk read the product (in $\FTA$) of the labels of the arcs.
\item Elements from $L_{\Eti}$ are thought of as labeling  ``infinitesimal'' commuting loops at $\bp$, that is, when at $\bp$ one can freely pick an element from $L_{\Eti}\leqslant \AA\leqslant \FTA$ as a label.
\end{enumerate}


More precisely, the \emph{enriched label} of a non-trivial walk $\walki =\edgi_1^{\epsilon_1}\cdots \edgi_k^{\epsilon_k}$ in $\Ati$, $k\geqslant 1$, is $\llab(\walki)=\llab(\edgi_1)^{\epsilon_1}\cdots \llab(\edgi_k)^{\epsilon_k}$, where $\llab(\edgi_i)=\mathrm{t}^{-\lab_1(\edgi_i)}\, \lab_{X}(\edgi_i) \, \mathrm{t}^{\lab_{2}(\edgi_i)} \in \FTA$; note that the label of $\walki$ as a walk in the skeleton is just $\lab_{X}(\walki) = \lab_{X}(\edgi_1)^{\epsilon_1}\cdots \lab_{X}(\edgi_k)^{\epsilon_k} \in \Fn$.
As a convention, we admit any element in $L_{\Eti}$ as a possible label of the trivial $\bp$-walk.

Recall that a walk beginning and ending at the basepoint is called a $\bp$-walk. An element (in $\FTA$) labeling a $\bp$-walk in an enriched automaton $\Eti$ is said to be \defin{recognized by} $\Eti$; for example, every $\vect{l}\in L_{\Eti}$ is so. It is straightforward to check that the set of all the elements recognized by an enriched automaton $\Eti$ is a subgroup of $\FTA$: it is called the \defin{subgroup recognized by $\Eti$}, and denoted by $\gen{\Eti}$. Note that $\gen{\sk(\Eti)} = (\gen{\Ati}) \prF$, and $\gen{\Eti} = \gen{\core (\Eti)}$.

It is clear that every subgroup in $\FTA$ is recognized by some enriched automata. This is obvious for subgroups inside $\AA \leqslant \FTA$ (which can be set as basepoint subgroups); on the other hand, given any element $\fta{u}{a} \in \FTA$ with $u\neq 1$, we can always consider the \defin{petal automaton} $\flower{\fta{u}{a}}$; that is, the following directed $\bp$-walk:
\begin{figure}[H]
\centering
  \begin{tikzpicture}[shorten >=1pt, node distance=.3cm and 1.5cm, on grid,>=stealth']
   \node[state,accepting] (0) {};
   \node[state] (1) [right = of 0] {};
   \node[state] (2) [right = of 1] {};
   \node[] (dots) [right = 0.5 of 2] {$\cdots$};
   \node[state] (4) [right = 0.5 of dots] {};
   \node[state, accepting] (5) [right = of 4] {};
   \node[](i) [right = 0.5 of 0] {};
   \node[](f) [left = 0.5 of 5] {};

   \path[->]
        (0) edge[]
            node[pos=0.5,below=-.2mm] {$\displaystyle{x_{i_{\textsf{1}}}}$}
            node[pos=0.1,above=-.2mm] {$\displaystyle{\mathbf{0}}$}
            node[pos=0.8,above=-.2mm] {$\displaystyle{\mathbf{0}}$}
            (1);
   \path[->]
        (1) edge[]
            node[pos=0.5,below=-.2mm] {$\displaystyle{x_{i_{\textsf{2}}}}$}
            node[pos=0.1,above=-.2mm] {$\displaystyle{\mathbf{0}}$}
            node[pos=0.8,above=-.2mm] {$\displaystyle{\mathbf{0}}$}
            (2);

   \path[->]
        (4) edge[]
            node[pos=0.5,below=-.2mm] {$\displaystyle{x_{i_k}}$}
            node[pos=0.1,above=-.2mm] {$\displaystyle{\mathbf{0}}$}
            node[pos=0.8,above=-.2mm] {$\displaystyle{\mathbf{a}}$}
            (5);
\end{tikzpicture}
\caption{A petal automaton recognizing $x_{i_1} x_{i_2} \cdots x_{i_k} \ta{a} = u\ta{a}$}
\label{fig: enriched petal automaton}
\end{figure}
Note that the label of this cycle is $\fta{u}{a}$ and hence  $\gen{\flower{\fta{u}{a}}}=\gen{\fta{u}{a}}$. Then, given a finite
subset $\mathcal{S}=\set{\fta{u_1}{a_1}, \ldots, \fta{u_p}{a_p},\ta{b_1},\ldots,\ta{b_q}} \subseteq \FTA$, with $u_1, \ldots ,u_p\neq 1$, we define the \defin{flower automaton} $\flower{\mathcal{S}}$ as the result of identifying the basepoints of the petals of the first $p$ elements in~$\mathcal{S}$, and declaring the basepoint subgroup to be $L_{\Eti}=\gen{\vect{b_1},\ldots,\vect{b_q}}$; see~\cref{fig:  flower automaton}.
\vspace{-20pt}
\begin{figure}[H]
\centering
\begin{tikzpicture}[shorten >=1pt, node distance=2.5cm and 2.5cm, on grid,auto,>=stealth',
decoration={snake, segment length=2mm, amplitude=0.5mm,post length=1.5mm}]
   \node[state,accepting] (1) {};
   \node (L) [below right= 0.4 and 0.75 of 1] {$\scriptstyle{L_{\Eti} = \gen{\vect{b_1},\ldots,\vect{b_q}}}$};
   \path[->,thick]
        (1) edge[loop,out=120,in=160,looseness=8,min distance=25mm,snake it]
            node[left=0.1] {$u_1$}
            node[below,pos=0.95] {$\vect{a_1}$}
            (1);
            (1);
   \path[->,thick]
        (1) edge[loop,out=60,in=20,looseness=8,min distance=25mm,snake it]
            node[right=0.1] {$u_p$}
            node[below,pos=0.95] {$\vect{a_p}$}
            (1);
\foreach \n [count=\count from 0] in {1,...,3}{
       \node[dot] (1\n) at ($(1)+(73+\count*15:0.75cm)$) {};}
\end{tikzpicture}
\vspace{-10pt}
\caption{The flower automaton $\flower{\mathcal{S}}$}
\label{fig:  flower automaton}
\end{figure}
\vspace{-10pt}
Clearly, one can extend the definition of flower automata to infinite subsets in the obvious way, and, in any case, $\gen{\flower{\mathcal{S}}} = \gen{\mathcal{S}}$, where the eventual purely abelian elements in $\mathcal{S}$ generate the basepoint subgroup $L_{\Eti}$. It is important to realize that although $L_{\Eti}\leqslant \HH \cap \AA$, the opposite inclusion may not be true, due to possible non-trivial relations among the free parts $u_1, \ldots ,u_p$.

Of course, a given subgroup $\HH\leqslant \FTA$ can be recognized by (infinitely) many enriched automata.
Namely,
\begin{enumerate*}[ind]
\item
the skeleton of the flower automaton defined above depends on (the free parts of) the chosen set of generators $\mathcal{S}$ for $\HH$; and there is also a lot of freedom in the distribution of the abelian labeling since:
\item
for any petal, we could alternatively have put the $\vect{a}$ label at the end of any of the other arcs in the walk (among infinitely many other possible configurations reading the same element $\fta{u}{a}$);
and
\item
every abelian label in $\Eti$ works modulo the basepoint subgroup $L_{\Eti}$.
\end{enumerate*}
So, the map $\Ati\mapsto \gen{\Ati}$ from the set of enriched automata to the set of subgroups of $\FTA$ is onto but very far from injective. To make it bijective we have to distinguish one and only one geometric object recognizing each subgroup.

\begin{defn}\label{def: normalized automaton}
Let $\Eti 
$ be an enriched $X$-automaton, and let $\Treei$ be a spanning tree of~$\Eti$.
We say that $\Ati$ is $\Treei$-\defin{normalized} if it is reduced, and the abelian labels of $\Ati$ are concentrated at the ends of the arcs outside $\Treei$ (\ie $\lab_1(\edgi)=\vect{0}$  for every $\edgi\in \Edgi\Ati$, and $\lab_{2}(\edgi)=\vect{0}$ for every $\edgi \in \Edgi\Treei$). It is easy to see that, if $\Eti
$ is a $\Treei$-normalized automaton recognizing $\HH$, then $\sk (\Eti) = \stallings{\HH \prF,X}$, and $L_{\Eti}=\HH \cap \AA$; see~\Cref{prop: enriched Stallings parts}.

\end{defn}

It is not difficult to see that, after taking the quotient modulo the basepoint subgroup $ L_{\Eti}$ (denoted by ``$\operatorname{mod}$ $\bp$''), we finally reach the desired unicity: for any given subgroup $\HH\leqslant \FTA$, and any given spanning tree $\Treei$ of $\Stallings{\HH \prF, X}$, every two \mbox{$\Treei$-normalized} enriched automata 
recognizing $\HH$ are equal modulo $L_{\Eti}$.
This uniquely determined object is called the $\Treei$-\defin{Stallings automaton} for $\HH$, denoted by $\St_{\Treei}(\HH,X)$. When the spanning tree $\Treei$ is clear from the context we will usually omit any reference to it
and write $\St(\HH,X)$.
Also, since unicity is usually not necessary for computational purposes, we will often
abuse terminology
and call any normalized automaton recognizing $\HH$ a ``\emph{Stallings automaton for $\HH$}''.

Finally, in order to obtain the desired bijection, we need a uniform way of distinguishing spanning trees in all the enriched automata. This can be done by fixing a total order $\preccurlyeq$ in the set $X\cup X^{-1}$: for any given $\Ati$, declare that~$\bp$ is in $\Treeo$ and then, recursively, add to $\Treeo$ the edge (together with its other incident vertex) with smallest possible label incident to the oldest vertex present in $\Treeo$ at that moment and not closing a path. This determines (even in the infinite case) a spanning tree in $\Eti$  denoted by $\Treeo(\Ati)$; see~\cite{DelgadoExtensionsFreeGroups2017,delgado_Stallings_ABF,delgado_Stallings_free_ext} for details. We say that $\Eti$ is $\preccurlyeq$-\defin{normalized} if it is $\Treeo(\Eti)$-normalized, and we write $\St_{\preccurlyeq}(\HH, X) \coloneqq \St_{\Treeo}(\HH, X)$.

The main result in this section is the following bijection between subgroups of $\FTA$ and (uniformly chosen) enriched Stallings automata, which are furthermore computable in the finitely generated case.

\begin{thm}
\label{thm: enriched Stallings bijection}
Let $\Free[X]$ be a free group with finite basis $X$, let~$\AA$ be a finitely generated abelian group, and let $\preccurlyeq$ be a total order on $X^{\pm}$. Then, the map
 \begin{equation}\label{eq: enriched Stallings bijection}
\begin{array}{rcl}
\operatorname{St}_{\preccurlyeq}\colon \big\{\,\text{subgroups of } \Free[X] \times \AA  \big\} & \leftrightarrow &
\Big\{
\begin{array}{l}
\text{(isomorphic classes of) }{\preccurlyeq}
\text{-normalized}\\
\text{$\AA$\!\!-enriched $X$-automata mod \bp}
\end{array}
\Big\} \\
\HH \ & \mapsto & \  \St_{\preccurlyeq}(\HH,X) \\
\gen{\Ati} \ & \mapsfrom &\  \Ati
\end{array}
 \end{equation}
is a bijection. Furthermore, finitely generated subgroups correspond precisely to finite automata and, in this case, the bijection is algorithmic.
\end{thm}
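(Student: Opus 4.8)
The plan is to build the bijection from the free-group Theorem~\ref{thm: Stallings bijection} layer by layer, separating the "skeleton" (a genuine Stallings automaton over $X$) from the abelian decoration. First I would establish well-definedness of $\operatorname{St}_\preccurlyeq$: given $\HH \leqslant \FTA$, its projection $\HH\prF \leqslant \Free[X]$ has a unique reduced automaton $\stallings{\HH\prF,X}$, on which the order $\preccurlyeq$ picks out a canonical spanning tree $\Treeo = \Treeo(\stallings{\HH\prF,X})$. For each cyclomatic $\Treeo$-arc $\edgi$, the $\Treeo$-petal reads a free word $u_\edgi$ in the $\Treeo$-basis of $\HH\prF$, and by Lemma~\ref{lem: completion is a coset} the completion $\cab{u_\edgi}{\HH}$ is a nonempty coset of $L_\HH = \HH\cap\AA$; declare the second abelian label of $\edgi$ to be (any representative of) $\bm{\omega}_\edgi \matr{A}$, all other abelian labels $\vect 0$, and the basepoint subgroup $L_\HH$. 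This produces a $\Treeo$-normalized enriched automaton, well-defined modulo $\bp$ precisely because abelian labels live modulo $L_\HH$; and by construction $\langle \St_\preccurlyeq(\HH,X)\rangle = \HH$, since a $\bp$-walk reads $\fta{w}{a}$ with $w\in\HH\prF$ and $\vect a$ ranging exactly over $w\phi\rho\matr A + L_\HH = \cab{w}{\HH}$, matching the Remark after Lemma~\ref{lem: completion is a coset}.

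Next I would show $\gen{\,\cdot\,}$ is a two-sided inverse. Surjectivity of $\operatorname{St}_\preccurlyeq$ onto normalized automata (mod $\bp$) and injectivity both reduce to the claim already announced in Definition~\ref{def: normalized automaton} (to be proved as Proposition~\ref{prop: enriched Stallings parts}): if $\Eti$ is $\Treei$-normalized and recognizes $\HH$, then $\sk(\Eti) = \stallings{\HH\prF,X}$ and $L_\Eti = \HH\cap\AA$. The first equality follows because projecting $\bp$-walks to free parts gives $\langle\sk(\Eti)\rangle = \HH\prF$, and $\sk(\Eti)$ is reduced, so Theorem~\ref{thm: Stallings bijection} forces it to be $\stallings{\HH\prF,X}$; the containment $L_\Eti \supseteq \HH\cap\AA$ is the content to check — any $\ta{a}\in\HH$ with trivial free part is read by a $\bp$-walk whose skeleton is a reduced $\bp$-walk in $\stallings{\HH\prF,X}$ labelled by $1\in\Free[X]$, hence trivial, hence (by $\Treei$-normalization, abelian labels only at ends of non-tree arcs) the walk uses only tree arcs and its enriched label is $\vect 0$ up to an element of $L_\Eti$; so $\vect a \in L_\Eti$. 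Once this is in hand: given $\Eti$, its skeleton pins down $\HH\prF$ and the tree $\Treeo$, the non-tree abelian labels are the completions of the $\Treeo$-petal words, and $L_\Eti = \HH\cap\AA$; these are exactly the data $\operatorname{St}_\preccurlyeq$ writes down, so $\St_\preccurlyeq(\gen{\Eti},X) = \Eti$ mod $\bp$, and conversely $\gen{\St_\preccurlyeq(\HH,X)} = \HH$ from the previous paragraph. Injectivity of $\operatorname{St}_\preccurlyeq$ is then immediate.

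For the finiteness and algorithmic clauses: by Corollary~\ref{cor: H fg iff Hpi fg}, $\HH$ is finitely generated iff $\HH\prF$ is, iff $\stallings{\HH\prF,X}$ is finite; since the enriched automaton adds no vertices or arcs beyond $\sk$, it is finite exactly in this case. Algorithmically, from a finite generating set $\mathcal S$ one forms the enriched flower automaton $\flower{\mathcal S}$, runs the folding process on the skeleton (as in the free case) while carrying the abelian labels — each identification of arcs imposes an equation among abelian labels, which one accumulates into the (growing) basepoint subgroup via standard integer linear algebra over $\AA$ (Smith normal form) — obtaining a reduced enriched automaton recognizing $\HH$; then normalize by pushing all first-labels to $\vect 0$ and tree second-labels to $\vect 0$ along $\Treeo$ (a finite sequence of label transfers), and finally reduce every remaining abelian label mod $L_\Eti$. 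Each step is effective and terminates, yielding $\St_\preccurlyeq(\HH,X)$.

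The main obstacle is the folding-with-labels bookkeeping, i.e.\ proving that the enriched folding process terminates with a \emph{well-defined} output independent of the order of foldings — the abelian analogue of Stallings' confluence. The clean way around it is precisely the bijection being proved: rather than a direct Church--Rosser argument, show each folding preserves $\gen{\,\cdot\,}$ (clear) and that the skeleton folding is confluent by Theorem~\ref{thm: Stallings bijection}, so that whatever abelian data survive must encode the completions $\cab{u_\edgi}{\HH}$, which depend only on $\HH$; uniqueness mod $\bp$ then follows from Proposition~\ref{prop: enriched Stallings parts} rather than from the rewriting system itself. A secondary subtlety is the choice of $\Treeo$ in the infinite case — well-foundedness of the recursive construction — but since here $X$ is finite and the relevant statement for infinite subgroups only needs $\sk$ to be the (possibly infinite) Stallings automaton, the order $\preccurlyeq$ still produces a canonical spanning tree by the usual greedy argument.
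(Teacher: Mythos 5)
Your treatment of the bijection itself is essentially the paper's: you describe $\St_{\preccurlyeq}(\HH,X)$ as $\stallings{\HH\prF,X}$ with its canonical $\preccurlyeq$-tree, decorated on the cyclomatic arcs by abelian completions of the petal words (well defined mod $L_{\HH}$ by \Cref{lem: completion is a coset}) and with basepoint subgroup $\HH\cap\AA$, and you recover injectivity and surjectivity from the key fact that any $\Treei$-normalized automaton recognizing $\HH$ has skeleton $\stallings{\HH\prF,X}$ and basepoint subgroup $\HH\cap\AA$, which is exactly \Cref{prop: enriched Stallings parts}. One small repair there: an element $\ta{a}\in\gen{\Eti}\cap\AA$ is a priori read by a non-reduced $\bp$-walk, so before concluding that the walk lies in the tree (hence is trivial) you must observe that deleting a backtracking pair does not change the enriched label (inverse arcs carry inverse enriched labels), or argue as the paper does via the free independence of the $\Treei$-petal labels.

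The genuine gap is in the algorithmic clause. You say that in the enriched folding process ``each identification of arcs imposes an equation among abelian labels, which one accumulates into the (growing) basepoint subgroup.'' That is correct only for \emph{closed} foldings, where the two arcs are parallel: there the abelian discrepancy around the bigon is a genuine element of $\HH\cap\AA$ and is added to $L_{\Eti}$. For an \emph{open} folding (the two arcs end at distinct vertices) no equation on the subgroup is imposed at all: the discrepancy must be absorbed by an arc transformation followed by a vertex transformation at the free endpoint, and adding it to the basepoint subgroup strictly enlarges the recognized subgroup. Concretely, fold the flower of $\set{xt,\,xy}\subseteq \Free[2]\times\ZZ$: the two $x$-arcs leaving $\bp$ end at different vertices and differ by the abelian label $1$, so your recipe would put $t$ into the basepoint subgroup, whereas $\gen{xt,xy}\cap\ZZ$ is trivial (the free parts $x,xy$ are freely independent); the correct output is the two-loop automaton reading $xt$ and $yt^{-1}$ with trivial $L_{\Eti}$. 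This open/closed dichotomy, handled in the paper by the vertex and arc transformations of \Cref{def: abelian transformations} and the closed-folding rule of \Cref{fig: closed enriched folding}, is precisely the delicate point of the enriched algorithm, so your proof of the ``algorithmic'' half needs this case distinction; your closing remark that confluence can be bypassed via the uniqueness of normalized automata (rather than a Church--Rosser argument) is fine and matches the paper's logic.
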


Let us focus on the algorithmic behavior of bijection \eqref{eq: enriched Stallings bijection}. Given a finite family of generators for a subgroup $\HH \leqslant \FTA$, we can algorithmically obtain a Stallings automaton recognizing $\HH$ by constructing the corresponding (enriched) flower automaton and appropriately adapting the folding process to the enriched scenario. To this end, we introduce two new ``abelian transformations'' intended to move the abelian mass around the automaton without changing the recognized subgroup.

\begin{defn}  \label{def: abelian transformations}
A \defin{vertex transformation} consists in adding a vector $\vect{c} \in \AA$ to every abelian label in the neighborhood of a vertex $\verti$:

\begin{figure}[H]
\centering
  \begin{tikzpicture}[shorten >=1pt, node distance=2cm and 2cm, on grid,auto,>=stealth']

   \node[state] (0) {};
   \node[] (1) [above = 1.5cm of 0] {};
   \node[] (2) [left = 2cm of 0] {};
   \node[] (3) [below right = 1.5cm of 0] {};

   \node[] (i) [] [right = 40pt of 0] {};
   \node[] (f) [] [right = 33pt of i] {};

   \node[state] (0') [right = 2.5cm of f]{};
   \node[] (1') [above = 1.5cm of 0'] {};
   \node[] (2') [left = 2 cm of 0'] {};
   \node[] (3') [below right = 1.5cm of 0'] {};

   \path[->]
        (0) edge[]
            node[pos=0.55,left=-.5mm] {$x_{i_1}$}
            node[pos=0.15,right=-.2mm] {$\vect{a_1} $}
            (1);
   \path[->]
        (2) edge[]
            node[pos=0.35,below=-.2mm] {$x_{i_2}$}
            node[pos=0.8,above=-.2mm] {$\vect{a_2} $}
            (0);
   \path[->]
        (3) edge[]
            node[pos=0.5,below] {$x_{i_3}$}
            node[pos=0.85,right= .5mm] {$\vect{a_3}$}
            (0);

   \path[->,thin,>=to]
        (i) edge[bend left]
         (f);

    \path[->]
        (0') edge[]
            node[pos=0.55,left=-.5mm] {$x_{i_1}$}
            node[pos=0.15,right] {$ \vect{a_1}  \!+\! \vect{c}$ }
            (1');
   \path[->]
        (2') edge[]
            node[pos=0.3,below=-.2mm] {$x_{i_2}$}
            node[pos=0.7,above=-.2mm] {$\vect{a_2} \!+\! \vect{c}$}
            (0');
   \path[->]
        (3') edge[]
            node[pos=0.5,below] {$x_{i_3}$}
            node[pos=0.85,right= .5mm] {$\vect{a_3} \!+\! \vect{c} $}
            (0');
\end{tikzpicture}
\caption{A vertex transformation}
\end{figure}

An \defin{arc transformation} consists on adding a vector $\vect{c} \in \AA$ to both the initial and final abelian labels of an arc:

\begin{figure}[H]
\centering
  \begin{tikzpicture}[shorten >=1pt, node distance=.3cm and 2.8cm, on grid,auto,>=stealth']
   \node[state] (0) {};
   \node[state] (1) [right = of 0] {};

   \node[] (i) [] [right = 15pt of 1] {};
   \node[] (f) [] [right = 33pt of i] {};
   \node[state] (0') [right = 15pt of f] {};
   \node[state] (1') [right = of 0'] {};

   \path[->]
        (0) edge[]
            node[pos=0.5,below=-.1mm] {$x_{i}$}
            node[pos=0.07,above=-.1mm] {$\vect{a}$}
            node[pos=0.88,above=-.1mm] {$\vect{b}$}
            (1);

   \path[->,thin,>=to]
        (i) edge[bend left]
         (f);

    \path[->]
        (0') edge[]
            node[pos=0.47,below=-.1mm] {$x_{i}$}
            node[pos=0.15,above=-.1mm] {$\vect{a}\!+\! \vect{c}$}
            node[pos=0.75,above=-.3mm] {$\vect{b} \!+\!  \vect{c}$}
            (1');
\end{tikzpicture}
 \caption{An arc transformation}
 \end{figure}
It is obvious that these two abelian transformations do not affect the skeleton of the automaton, and it is straightforward to check that they do not affect the recognized subgroup either.
Note that a vertex transformation at the basepoint (say by a vector $\vect{c}$) corresponds to a conjugation by $\mathrm{t}^{\vect{c}}$, which in our case belongs to the center of $\Fta$.
\end{defn}

We claim that these two abelian transformations suffice to convert any folding situation in $\sk (\Eti)$ into a folding situation in $\Eti$: suppose that $\edgi$ and $\edgii$ are two arcs in $\Eti$ with the same free label $\lab_X(\edgi)=\lab_X(\edgii)$
departing from the same vertex, say $\verti = \iota \edgi =\iota \edgii$. Distinguish two cases: the \defin{open} case, when they are non-parallel (\ie $\tau \edgi \neq\tau \edgii$), and the \defin{closed} case when they are parallel (\ie $\tau \edgi =\tau \edgii$).

In the open case, in order to fold $\edgi$ and $\edgii$, we have to make sure that both arcs have the same abelian labels: performing an appropriate arc transformation to $\edgii$ we can get $\lab_1(\edgi)=\lab_1(\edgii)$; and then, after an appropriate vertex transformation at $\tau \edgii$ (and using the fact $\tau \edgii \neq\tau \edgi$), we can further obtain $\lab_2(\edgi)=\lab_2(\edgii)$.
After this preparation, all the labels in $\edgi$ and $\edgii$ coincide, and we can effectively perform the folding in $\Ati$.

Note that the above procedure does not work in the closed situation because the vertex transformation at $\tau \edgii$ also affects the label $\lab_2(\edgi)$ we want to match. In this case, instead, we just fully remove $\edgii$ and update the basepoint subgroup from $L_{\Eti}$ to $L_{\Eti}+\gen{-\lab_2(\edgi)+\lab_1(\edgi)-\lab_1(\edgii)+\lab_2(\edgii)}$ in order to take into account the purely abelian contribution of the closed walk around the folded cycle.

\begin{figure}[H]
\centering
  \begin{tikzpicture}[shorten >=1pt, node distance=1cm and 2.5cm, on grid,auto,baseline,>=stealth']
   \node[state] (1)  {};
   \node[state] (2) [right = of 1] {};
   \node[state,accepting] (bp) [below right = 1.25  and 1.25 of 1] {};
   \node[] (H) [below = 0.3 of bp] {$\scriptstyle{L_{\Eti}}$};

   \path[->]
        (1) edge[bend left=40]
            node[below] {$\geni_i$}
            node[pos=0.05,above = 0.02] {\rotatebox[origin=c]{30}{$\vect{a}$}}
            node[pos=0.92,above = -0.02] {\rotatebox[origin=c]{-30}{$\vect{b}$}}
            (2);
   \path[->]
        (1) edge[bend right=40]
            node[above= -0.02] {$\geni_i$}
            node[pos=0.05,below = 0.02] {\rotatebox[origin=c]{30}{$\vect{c}$}}
            node[pos=0.92,below = -0.02] {\rotatebox[origin=c]{30}{$\vect{d}$}}
            (2);
   \path[->]
        (2) edge[snake it,bend left=41]
        (bp);

   \node[] (i) [right = 0.75 of 2]{};
   \node[] (f) [right = 1.15 of i] {};

    \path[->,thin,>=to]
        (i) edge[bend left]
         (f);

    \node[state] (1') [right = 0.75 of f] {};
   \node[state] (2') [right = of 1'] {};
   \node[state,accepting] (bp') [below right = 1.25  and 1.25 of 1'] {};
   \node[] (H) [below right = 0.35 and 0.95 of bp'] {$\scriptstyle{L_{\Eti} \,+\, \gen{(-\vect{b} +  \vect{a} - \vect{c}  +\vect{d})}}$};

   \path[->]
        (1') edge
            node[below] {$\geni_i$}
            node[pos=0.05,above = 0.02] {$\vect{a}$}
            node[pos=0.85,above = -0.02] {$\vect{b}$}
            (2');


   \path[->]
        (2') edge[snake it,bend left=41]
        (bp');

\end{tikzpicture}
 \caption{Closed enriched folding}
 \label{fig: closed enriched folding}
 \end{figure}
It is straightforward to see that these two types of enriched foldings do not change the recognized subgroup. Hence, interspersing the appropriate abelian transformations, we can mimic the (any) folding procedure for the skeleton to obtain a reduced enriched automaton recognizing $\HH$ which, after normalizing \wrt a chosen spanning tree \!$\Treei$, will become a Stallings automaton
$\Eti$
for $\HH$.

Note that then
the basepoint subgroup of $\Eti$ is the original basepoint subgroup for $\flower{\HH}$ \emph{possibly enlarged} by the contributions of the eventual closed foldings in the reduction process, whereas
$\sk (\Eti) = \sk(\body{\Eti}) = \stallings{\HH \prF,X}$.
Therefore,
calling $\Basis_{\Treei}
= \set{\,
\llab(\Tpetal{\edgi}) \st \edgi\in \Edgs^{+} \Ati \setmin \Edgs \Treei \,
}
$
(the set of enriched labels of the positive $\Treei$-petals in $\Eti$),
we have $(\Basis_{\Treei}) \prF = \basis_{\Treei}$ (the positive $\Treei$-basis of~$\HH \prF$).
Indeed, besides providing  the desired bijection \eqref{eq: enriched Stallings bijection},
enriched Stallings automata encode the internal structure (and, in particular, a basis) of the subgroups of $\FTA$ in a very transparent way.

\begin{prop}\label{prop: enriched Stallings parts}
Let $\Eti 
$
be a $\Treei$-normalized automaton recognizing $\HH \leqslant \Fta$.
Then, $\HH =\gen{\body{\Eti}} \times L_{\Eti}$,
where $\gen{\body{\Eti}}$ is the image of a splitting of $\prF_{\mid \HH}$, and $L_{\Eti} = \HH \cap \AA$.
Moreover,
$\Basis_{\Treei}$
is a free basis
for~$\gen{\body{\Eti}}$
(called the \defin{(positive) $\Treei$-basis} of $\gen{\body{\Eti}}$)
which, joined to an abelian basis for
$L_{\Eti}$, constitutes a basis for~$\HH$.
\end{prop}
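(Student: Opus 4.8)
The plan is to reduce everything to the classical Stallings theory applied to the skeleton $\sk\Eti$: first identify that skeleton, then analyse the body $\body{\Eti}$ on its own, and finally read off the direct decomposition $\HH=\gen{\body{\Eti}}\times L_{\Eti}$ together with the basis. The starting observation is that, since $\Eti$ recognizes $\HH$ and is reduced, $\sk\Eti$ is a reduced $X$-automaton and the subgroup it recognizes is $\gen{\sk\Eti}=(\gen{\Eti})\prF=\HH\prF$; so \Cref{thm: Stallings bijection} forces $\sk\Eti=\stallings{\HH\prF,X}$. By the classical theory it follows that $\basis_{\Treei}=\set{u_\edgi}$, where $u_\edgi\coloneqq\lab_{X}(\Tpetal{\edgi})$, is a free-basis of $\HH\prF$ (in particular each $u_\edgi\neq\trivial$); and, reading the petals arc by arc, $\prF$ sends $\llab(\Tpetal{\edgi})$ to $u_\edgi$, so $(\Basis_{\Treei})\prF=\basis_{\Treei}$.

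The key step --- and the one I expect to be the main obstacle --- is the identity $\gen{\body{\Eti}}=\gen{\Basis_{\Treei}}$. The inclusion $\gen{\Basis_{\Treei}}\leqslant\gen{\body{\Eti}}$ is immediate, since each $\llab(\Tpetal{\edgi})$ labels a $\bp$-walk of $\body{\Eti}$. For the reverse inclusion, take a $\bp$-walk $\walki=\verti_0\edgi_1\verti_1\cdots\edgi_k\verti_k$ of $\body{\Eti}$ and insert, at each intermediate vertex $\verti_j$, the closed tree walk going from $\verti_j$ to $\bp$ along $\Treei$ and back. This leaves $\llab(\walki)$ unchanged: such a detour reduces to the trivial walk after deleting backtracking, and --- here is the essential use of $\Treei$-normalization --- every arc of $\Treei$ carries zero abelian labels, so it reads the trivial element of $\FTA$, not just of the skeleton. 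After the insertions, $\walki$ factors as a concatenation of $\bp$-walks, one per arc $\edgi_j$, each of the form (tree geodesic $\bp\xleadsto{_{\Treei}}\init \edgi_j$, then $\edgi_j$, then tree geodesic $\term \edgi_j\xleadsto{_{\Treei}}\bp$). A factor with $\edgi_j\in\Edgs \Treei$ lies entirely in $\Treei$, hence (being a closed tree walk with zero abelian labels) reads $\trivial$; a factor with $\edgi_j\notin\Edgs \Treei$ is exactly $\Tpetal{\edgii}^{\pm1}$ for the positive arc $\edgii\in\set{\edgi_j,\edgi_j^{-1}}$, hence reads $\llab(\Tpetal{\edgii})^{\pm1}$. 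Thus $\llab(\walki)\in\gen{\Basis_{\Treei}}$.

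This is then upgraded to the statements that $\Basis_{\Treei}$ is a free-basis of $\gen{\body{\Eti}}$ and that $\prF$ restricts to an isomorphism $\gen{\body{\Eti}}\isom\HH\prF$, by the usual comparison with the skeleton. By the previous step, the assignment $\edgi\mapsto\llab(\Tpetal{\edgi})$ induces an epimorphism $\theta$ from the free group on (symbols indexed by) the positive arcs of $\Eti$ outside $\Treei$ onto $\gen{\body{\Eti}}$; composing $\theta$ with $\prF$ gives $\edgi\mapsto u_\edgi$, which is an \emph{isomorphism} onto $\HH\prF$ by \Cref{thm: Stallings bijection} applied to $\sk\Eti$. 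Hence $\theta$ is injective, so an isomorphism, and both statements follow. In particular $\prF_{\mid\gen{\body{\Eti}}}$ is injective, so $\gen{\body{\Eti}}\cap\AA=\trivial$, and $\sigma\coloneqq(\prF_{\mid\gen{\body{\Eti}}})^{-1}\colon\HH\prF\to\HH$ is a splitting of $\prF_{\mid\HH}$ with image $\gen{\body{\Eti}}$.

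It remains to assemble the decomposition. Since $\AA$ is central in $\FTA$, inserting elements of $L_{\Eti}$ along a $\bp$-walk only multiplies its label by a single element of $L_{\Eti}$, so every element recognized by $\Eti$ has the form $\llab(\walki)\,\vect{l}$ with $\walki$ a $\bp$-walk of $\body{\Eti}$ and $\vect{l}\in L_{\Eti}$; hence $\HH=\gen{\Eti}=\gen{\body{\Eti}}\cdot L_{\Eti}$. As $L_{\Eti}\leqslant\AA$ is central, it is normal in $\HH$ and normalizes $\gen{\body{\Eti}}$, and $\gen{\body{\Eti}}\cap L_{\Eti}\leqslant\gen{\body{\Eti}}\cap\AA=\trivial$, so the product is direct: $\HH=\gen{\body{\Eti}}\times L_{\Eti}$, with $\gen{\body{\Eti}}$ the image of the splitting $\sigma$. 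Then $L_{\Eti}\leqslant\HH\cap\AA$ is clear, while any $\vect{a}\in\HH\cap\AA$, written $h\vect{l}$ with $h\in\gen{\body{\Eti}}$ and $\vect{l}\in L_{\Eti}$, has $h\prF=\vect{a}\prF=\trivial$, forcing $h=\trivial$ and $\vect{a}=\vect{l}\in L_{\Eti}$; so $L_{\Eti}=\HH\cap\AA$. To close, the free-basis $\Basis_{\Treei}$ of $\gen{\body{\Eti}}$ --- whose free parts form the free-basis $\basis_{\Treei}$ of $\HH\prF$ --- together with any abelian-basis of $L_{\Eti}=\HH\cap\AA$ gives, via $\HH=\gen{\body{\Eti}}\times L_{\Eti}$ and \eqref{eq: rank subgroup}, a generating set of $\HH$ of exactly the form required of a basis of $\HH$.
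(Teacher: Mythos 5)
Your proof is correct and follows essentially the same route as the paper's: reduce to the classical Stallings description of the skeleton $\sk\Eti=\stallings{\HH\prF,X}$, lift the free-basis $\basis_{\Treei}$ to the enriched petal labels $\Basis_{\Treei}$, and use the resulting splitting of $\prF_{\mid \HH}$ to get $\HH=\gen{\body{\Eti}}\times L_{\Eti}$ and the basis statement. The only difference is one of detail: you spell out the generation step $\gen{\body{\Eti}}=\gen{\Basis_{\Treei}}$ (the tree-detour argument, correctly pinpointing that $\Treei$-normalization is what makes closed tree walks read trivially in $\FTA$) and rederive the directness of the product by hand, whereas the paper leaves the former implicit and simply invokes the decomposition \eqref{eq: subgroup decomposition}.
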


\begin{proof}
The inclusion $L_{\Eti}\leqslant \gen{\Eti}\cap \AA$ is obvious by construction.
For the opposite inclusion,
let $\Basis_{\Treei} = \set{\fta{u_i}{a_i}}_i$, and
suppose that $\ta{a} \in \gen{\Eti}\cap \AA$.
That is, 
$\ta{a} = w(\fta{u_i}{a_i}) \, \ta{l}$,
where $\vect{l} \in L_{\Eti}$,
and $w(\fta{u_i}{a_i})$ denotes a reduced word on the $\fta{u_i}{a_i}$'s.
Since the free part of this element is trivial, and $\set{u_i}_i$ is freely independent,
then $w$ must be the trivial word and thus $\ta{a} = \ta{l} \in L_{\Eti}$, as we wanted to see.
For the second claim, it is enough to consider the homomorphism $\HH \prF \to \HH$ given by $\lab_X(\Tpetal{\edgi}) \mapsto \llab(\Tpetal{\edgi})$, for each arc $\edgi \in \Edgs\Ati \setmin \Treei$, and recall the decomposition~\eqref{eq: subgroup decomposition}.
\end{proof}


\begin{defn}
If $\Eti$ is a $\Treei$-normalized automaton recognizing $\HH \leqslant \FTA$, then
\emph{any} $\Basis_{\Treei}$ defined as above is called an \defin{enriched $\Treei$-basis of $\gen{\body{\Eti}}$}. So, the union of an abelian basis of $L_{\Eti}$, and an enriched $\Treei$-basis of~$\gen{\body{\Eti}}$ is a basis for $\gen{\Eti}$.
\end{defn}

The above considerations, together with the algorithmic nature of bijection \eqref{eq: enriched Stallings bijection}, allow us to easily compute bases of finitely generated subgroups, and  solve the subgroup membership problem within free-times-abelian groups.

\begin{cor} \label{cor: bases are computable}
There exists an algorithm which given a finite family of elements $\mathcal{S} \subseteq \FTA$ outputs a basis for the subgroup $\gen{\mathcal{S}} \leqslant \FTA$.
\end{cor}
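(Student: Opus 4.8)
I would prove Corollary~\ref{cor: bases are computable} as a direct assembly of the algorithmic machinery developed so far: namely, the enriched flower construction and the enriched folding procedure that, together, compute a $\Treei$-normalized automaton for $\gen{\mathcal{S}}$, followed by the reading-off of a basis from that automaton via Proposition~\ref{prop: enriched Stallings parts}. Concretely: given the finite input $\mathcal{S} = \set{\fta{u_1}{a_1}, \ldots, \fta{u_p}{a_p}, \ta{b_1}, \ldots, \ta{b_q}}$, first reduce each free part $u_i$ to its reduced word form (discarding any element whose free part is trivial, adding its abelian part to the list of $\vect{b}_j$'s), and build the enriched flower automaton $\flower{\mathcal{S}}$ as in Figure~\ref{fig: flower automaton}, with basepoint subgroup $L_{\Eti} = \gen{\vect{b_1}, \ldots, \vect{b_q}}$ (a finitely generated, hence computable, subgroup of $\AA$). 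By construction $\gen{\flower{\mathcal{S}}} = \gen{\mathcal{S}}$.

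**Folding to a Stallings automaton.** Next I would run the enriched folding process: repeatedly locate a determinism violation in the skeleton $\sk\Eti$ and resolve it using the enriched foldings described after Definition~\ref{def: abelian transformations} — the open folding (preceded by the appropriate arc and vertex transformations to match the two abelian labels) or the closed folding (removing the redundant arc and enlarging $L_{\Eti}$ by the purely abelian contribution of the collapsed cycle, as in Figure~\ref{fig: closed enriched folding}). Each such step leaves the recognized subgroup unchanged and strictly decreases the number of arcs in the skeleton, so since the flower automaton is finite the process terminates after finitely many steps at a reduced enriched automaton recognizing $\gen{\mathcal{S}}$. Finally, fixing any spanning tree $\Treei$ of its skeleton (e.g. the one induced by a total order $\preccurlyeq$ on $X^{\pm}$), one normalizes by pushing all abelian labels to the ends of the $\Treei$-arcs via arc transformations; the outcome is a $\Treei$-normalized — hence, modulo $\bp$, the unique $\Treei$-Stallings — automaton $\Eti$ for $\HH = \gen{\mathcal{S}}$, and every step of this is effective.

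**Reading off the basis.** With $\Eti$ in hand, I invoke Proposition~\ref{prop: enriched Stallings parts}: the enriched $\Treei$-basis $\Basis_{\Treei} = \set{\llab(\Tpetal{\edgi}) \st \edgi \in \Edgs^{+}\Ati \setmin \Edgs\Treei}$ is a free-basis of $\gen{\body{\Eti}}$, and joining it to any computed abelian-basis of $L_{\Eti} = \HH \cap \AA$ yields a basis of $\HH$ in the sense of the Definition following \eqref{eq: rank subgroup}. Computing $\Basis_{\Treei}$ is routine: for each positive $\Treei$-arc $\edgi$, trace the unique reduced tree-walks $\bp \xleadsto{\Treei} \init\edgi$ and $\term\edgi \xleadsto{\Treei} \bp$, concatenate with $\edgi$, and multiply out the enriched labels in $\FTA$; extracting an abelian-basis of the finitely generated subgroup $L_{\Eti} \leqslant \AA$ is standard linear algebra over $\ZZ$ modulo the fixed torsion orders (Smith normal form). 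Outputting the union of these two finite lists completes the algorithm.

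**Main obstacle.** None of the ingredients is individually deep here — the work was all done in setting up the enriched foldings — so the only thing requiring genuine care is the \emph{termination and correctness of the enriched folding loop}: one must check that the closed-folding case, which does not straightforwardly mimic a skeleton folding (as noted in the text, a vertex transformation at $\tau\edgii$ would disturb the label one is trying to match), nonetheless still decreases the arc count while correctly recording the lost abelian mass in $L_{\Eti}$, so that at the end $L_{\Eti}$ really equals $\HH \cap \AA$ rather than a proper subgroup of it. This is precisely the content already established in the discussion preceding Proposition~\ref{prop: enriched Stallings parts} (and in Proposition~\ref{prop: enriched Stallings parts} itself), so in the write-up I would simply cite it; the corollary is then a one-paragraph corollary of Theorem~\ref{thm: enriched Stallings bijection} together with Proposition~\ref{prop: enriched Stallings parts}.
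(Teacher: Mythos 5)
Your proposal is correct and follows essentially the same route as the paper: compute a $\Treei$-normalized (Stallings) automaton for $\gen{\mathcal{S}}$ via the enriched flower-and-folding procedure underlying Theorem~\ref{thm: enriched Stallings bijection}, then read off an enriched $\Treei$-basis from the $\Treei$-petals and an abelian-basis of $L_{\Eti}$ by linear algebra, invoking Proposition~\ref{prop: enriched Stallings parts} for correctness. The paper's proof is just a terser version of exactly this argument.
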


\begin{proof}
It is enough to construct a Stallings automaton $\Eti$ for $\gen{\mathcal{S}}$ (normalized \wrt some spanning tree~$\Treei$). Then, an abelian basis for $\gen{\mathcal{S}}\cap \AA = \gen{\Eti} \cap \AA = L_{\Eti}$ can be computed from the generating set at hand, using linear algebra,
whereas an enriched $\Treei$-basis of $\gen{\body{\Eti}}$ is obtained after reading the enriched labels of the $\Treei$-petals in $\Eti$.
\end{proof}

\begin{prop}
The subgroup membership problem is solvable for free-times-abelian groups.
\end{prop}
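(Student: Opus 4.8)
The plan is to reduce the subgroup membership problem $\SMP(\FTA)$ to a finite computation in the enriched Stallings automaton of the given subgroup, exactly mirroring the classical free-group argument but keeping track of the abelian decorations. So, suppose we are given a finite family $\mathcal{S}\subseteq\FTA$ generating a subgroup $\HH=\gen{\mathcal{S}}$, together with a test element $\fta{w}{a}\in\FTA$ in normal form, and we must decide whether $\fta{w}{a}\in\HH$.

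First I would invoke the algorithmic half of \Cref{thm: enriched Stallings bijection} (equivalently, the enriched folding procedure described after it, using \Cref{def: abelian transformations}) to construct, from $\mathcal{S}$, a $\Treei$-normalized enriched automaton $\Eti$ recognizing $\HH$, for some computable spanning tree $\Treei$ of its skeleton; since $\mathcal{S}$ is finite, $\Eti$ is a finite object. By \Cref{prop: enriched Stallings parts} we then have $\sk\Eti=\stallings{\HH\prF,X}$ and $L_{\Eti}=\HH\cap\AA$, and moreover \Cref{cor: bases are computable} (or the construction itself) yields an explicit basis $\set{\fta{u_1}{a_1},\ldots,\fta{u_p}{a_p};\ta{b_1},\ldots,\ta{b_q}}$ of $\HH$, with $\set{u_1,\ldots,u_p}$ the positive $\Treei$-basis of $\HH\prF$ and $L_{\HH}=\gen{\vect{b_1},\ldots,\vect{b_q}}$.

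Next I would apply the Remark following \Cref{lem: completion is a coset}, which gives the decisive equivalence
\begin{equation*}
\fta{w}{a}\in\HH \ \Biimp\ w\in\HH\prF \ \text{ and }\ \vect{a}\in w\phi\rho\matr{A}+L_{\HH}.
\end{equation*}
Deciding $w\in\HH\prF$ is just classical Stallings membership in the free group: check whether the reduced word $w$ labels a $\bp$-walk in $\sk\Eti=\stallings{\HH\prF,X}$, which is a finite deterministic automaton, so this is decidable by reading $w$. If $w\notin\HH\prF$ we answer ``no''. Otherwise, the (unique reduced) $\bp$-walk spelled by $w$ in $\Eti$ has a well-defined enriched label $\fta{w}{a'}$ with $\vect{a'}\in\AA$ computable by multiplying out the arc labels along the walk; by \Cref{lem: completion is a coset} this $\vect{a'}$ is one completion of $w$, i.e.\ $\cab{w}{\HH}=\vect{a'}+L_{\HH}$. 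Hence $\fta{w}{a}\in\HH$ if and only if $\vect{a}-\vect{a'}\in L_{\HH}=\gen{\vect{b_1},\ldots,\vect{b_q}}$, and membership in a finitely generated subgroup of the finitely generated abelian group $\AA$ is decidable by linear algebra (Smith normal form over $\ZZ$, modding out torsion). This yes/no answer solves $\SMP(\FTA)$.

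The only genuine content beyond bookkeeping is the computability of the enriched Stallings automaton, i.e.\ the termination of the enriched folding process — but this was already established (the skeleton foldings terminate since the number of arcs is finite and strictly decreases, and each enriched folding is accompanied only by finitely many abelian transformations and a single update of the basepoint subgroup), so here it may simply be cited. Thus no step presents a real obstacle: the proof is an assembly of \Cref{thm: enriched Stallings bijection}, \Cref{prop: enriched Stallings parts}, \Cref{lem: completion is a coset}, classical free-group membership, and linear algebra over $\ZZ$.
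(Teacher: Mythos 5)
Your proposal is correct and follows essentially the same route as the paper: compute a (finite) enriched Stallings automaton for $\HH$, test whether $w$ labels a $\bp$-walk in the skeleton, and if so compare the abelian part $\vect{a}$ with the completion read off that walk modulo $L_{\Eti}=\HH\cap\AA$ by linear algebra. The extra detail you supply (citing \Cref{lem: completion is a coset} and the termination of the enriched folding) is consistent with, and just makes explicit, what the paper's proof uses implicitly.
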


\begin{proof}
Given $\fta{w}{a} \in \FTA$ and a finite subset $\mathcal{S}\subseteq \FTA$, compute a Stallings automaton $\Eti$ for $\HH=\gen{\mathcal{S}}$. Now, try to realize $w$ as the free label of a \bp-walk in $\sk(\Eti)$: if it is not possible then $w\notin \gen{\sk (\Eti)}=\HH\prF$ and return \nop; otherwise, the enriched label of this \bp-walk provides a vector $\vect{b}\in \AA$ such that $\fta{w}{b}\in \HH$. Finally, $\fta{w}{a} \in \HH$ if and only if $t^{\vect{b}-\vect{a}}\in  L_{\Eti} = \HH\cap \AA$, which is again easily decidable using linear algebra.
\end{proof}

\section{Intersection of subgroups} \label{sec: Intersections FTA}

Intersections of subgroups is a research topic with a long and interesting history. For an arbitrary group $G$, we can consider the following concept and problem as natural starting points.

\begin{defn} \label{def: Howson group}
A group $G$ is said to satisfy the \defin{Howson property} (or to be \defin{Howson} for short) if the intersection of any pair of finitely generated subgroups of $G$ is again finitely generated.
\end{defn}

\begin{named}[Subgroup intersection problem, $\SIP(G)$] \label{def: SIP}
Given two finite sets of words $R,S$ in the generators of $G$, decide whether the intersection $\gen{R} \cap \gen{S}$ is finitely generated; and, in the affirmative case, compute a generating set for the intersection.
\end{named}

It is well known that subgroups of (non-cyclic) finitely generated free groups are again free, but can have any (finite or countably infinite) rank. However, in 1954 \citeauthor{HowsonIntersectionFinitelyGenerated1954} proved that the intersection of two finitely generated subgroups of the free group is always finitely generated; see~\cite{HowsonIntersectionFinitelyGenerated1954}. The classical Stallings automata machinery provides a neat and algorithmic-friendly proof for this remarkable fact, and furthermore makes it possible to compute a basis for the intersection.

\begin{thm}[Howson, \cite{HowsonIntersectionFinitelyGenerated1954}]
\label{thm: Howson}
Free groups are Howson and have solvable \SIP. \qed
\end{thm}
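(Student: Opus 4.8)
The plan is to prove both assertions at once, via the classical pull-back (fiber product) construction on Stallings automata. Let $H, K \leqslant \Free[X]$ be finitely generated. First I would compute the finite reduced automata $\Ati \coloneqq \stallings{H,X}$ and $\Ati' \coloneqq \stallings{K,X}$, which is effective by \Cref{thm: Stallings bijection}. Then form the \emph{product automaton} $\Ati \times \Ati'$: its vertices are the pairs $(\verti,\verti')\in \Verts\Ati \times \Verts\Ati'$, it has an arc $(\verti_1,\verti_1') \xarc{x\,} (\verti_2,\verti_2')$ exactly when $\verti_1 \xarc{x\,} \verti_2$ is an arc of $\Ati$ and $\verti_1' \xarc{x\,} \verti_2'$ is an arc of $\Ati'$ (the involution taken componentwise), with basepoint the pair $(\bp_{\Ati},\bp_{\Ati'})$. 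By construction $\Ati \times \Ati'$ is again a pointed involutive $X$-automaton; it is finite since $\Ati$ and $\Ati'$ are, and it is deterministic because a componentwise product of deterministic automata is deterministic (two equally-labelled arcs leaving $(\verti,\verti')$ have equal projections in each factor, hence coincide).

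The heart of the argument is the identity $\gen{\Ati \times \Ati'} = H \cap K$. Here determinism is what makes the projections behave well: in a deterministic $X$-automaton, from a prescribed vertex any word in $(X^{\pm})^*$ labels at most one walk. Consequently a $\bp$-walk of $\Ati \times \Ati'$ projects to a pair consisting of a $\bp$-walk of $\Ati$ and a $\bp$-walk of $\Ati'$ sharing the same free label $w$; and conversely, given a reduced $w \in H \cap K$, the unique $\bp$-walks reading $w$ in $\Ati$ and in $\Ati'$ assemble coordinatewise into a $\bp$-walk of $\Ati \times \Ati'$ with free label $w$. Reading off free labels, $w \in \gen{\Ati \times \Ati'}$ if and only if $w \in H$ and $w \in K$.

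It then remains to put the product in Stallings form: passing to $\core(\Ati \times \Ati')$ — the basepoint component with the hanging trees not containing $(\bp_{\Ati},\bp_{\Ati'})$ removed — leaves the recognized subgroup unchanged and produces a \emph{finite reduced} $X$-automaton recognizing $H \cap K$. By \Cref{thm: Stallings bijection}, a subgroup recognized by a finite reduced automaton is finitely generated; hence $\Free[X]$ is Howson. Moreover, choosing any spanning tree $\Treei$ of this automaton and reading the positive $\Treei$-petals yields, effectively, a free-basis of $H \cap K$. This also settles $\SIP(\Free[X])$: the decision part is always answered ``finitely generated'', and the construction above is an algorithm outputting a generating set. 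The only step demanding genuine care is the equality $\gen{\Ati \times \Ati'} = H \cap K$ — specifically, invoking determinism to match $\bp$-walks in the product with pairs of $\bp$-walks in the factors, and checking that backtracking is handled compatibly so that reduced words correspond across all three automata. Everything else — finiteness, and determinism and coreness of the product and of its core — is immediate from the componentwise definitions.
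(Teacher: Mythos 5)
Your argument is correct and is essentially the paper's own proof: the paper also computes $\stallings{H,X}$ and $\stallings{K,X}$, forms their (tensor) product, uses that a product of deterministic automata is deterministic and recognizes the intersection (its Lemma~\ref{lem: product intersection}), and concludes via $\stallings{H\cap K,X}=\core(\stallings{H,X}\times\stallings{K,X})$ (Corollary~\ref{cor: pb}) that the intersection is finite, computable, and has an effectively readable free-basis. Nothing to add.
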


The key concept needed for the geometric proof of this fact is that of product of automata.

\begin{defn} \label{def: product of automata}
Let $\Ati_{\!1},\Ati_{\!2}$ be $X$-automata. The \defin{(tensor or categorical) product} of $\Ati_{\!1}$ and $\Ati_{\!2}$, denoted by $\Ati_{\!1} \times \Ati_{\!2}$, is the automaton with vertex set the Cartesian product $\Verts\Ati_{\!1} \times \Verts \Ati_{\!2}$, an arc $(\verti_1,\verti_2) \xarc{x\,} (\vertii_1,\vertii_2)$ for every pair of arcs $\verti_1 \xarc{x\,} \vertii_1$ in~$\Ati_{\!1}$, and $\verti_2 \xarc{x\,} \vertii_2$ in~$\Ati_{\!2}$ with the same label $x\in X$, and  basepoint~$(\bp_1,\bp_2)$.
\end{defn}

The following easily checkable facts complete the link between intersections of subgroups of the free group and products of Stallings automata.

\begin{lem} \label{lem: product intersection}
If $\Ati_{\!1}$ and $\Ati_{\!2}$ are deterministic $X$-automata, then the product $\Ati_{\!1} \times \Ati_{\!2}$ is again deterministic, and recognizes the intersection of the corresponding subgroups; that is, $\gen{\Ati_{\!1} \times \Ati_{\!2}} = \gen{\Ati_{\!1}} \cap \gen{\Ati_{\!2}}$. \qed
\end{lem}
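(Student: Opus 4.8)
The plan is to check the two assertions in turn: that $\Ati_{\!1} \times \Ati_{\!2}$ is deterministic, and that it recognizes the intersection $\gen{\Ati_{\!1}} \cap \gen{\Ati_{\!2}}$. Both are obtained by unwinding the definitions of the product, of a walk, and of the recognized subgroup; the only non-formal point is a length-matching step in the nontrivial inclusion, which is exactly where determinism is used.

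For determinism, suppose two arcs of $\Ati_{\!1} \times \Ati_{\!2}$ carry the same label $x \in X^{\pm}$ and share their origin $(\verti_1,\verti_2)$ (the case of a shared terminal vertex is symmetric and uses the ``arrive at'' half of \Cref{def: normalized automaton}'s determinism, or simply the involutive structure of the product). By the definition of the product, each of these two arcs is the coordinatewise pairing of an $x$-arc out of $\verti_1$ in $\Ati_{\!1}$ with an $x$-arc out of $\verti_2$ in $\Ati_{\!2}$; since $\Ati_{\!1}$ and $\Ati_{\!2}$ are deterministic, each factor has at most one such arc, so the two product arcs coincide. Hence $\Ati_{\!1} \times \Ati_{\!2}$ is deterministic. (Note it need not be core even when the factors are, which is why the Howson argument will subsequently pass to the core of the product.)

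For the easy inclusion, the two coordinate projections $\pi_i \colon \Ati_{\!1} \times \Ati_{\!2} \to \Ati_{\!i}$ send the basepoint $(\bp_1,\bp_2)$ to $\bp_i$ and send any walk to a walk with the same label. Thus a $(\bp_1,\bp_2)$-walk $\gamma$ gives $\bp_i$-walks $\gamma \pi_i$ with $\lab_X(\gamma \pi_i) = \lab_X(\gamma)$, so every element recognized by the product is recognized by both factors, i.e. $\gen{\Ati_{\!1} \times \Ati_{\!2}} \subseteq \gen{\Ati_{\!1}} \cap \gen{\Ati_{\!2}}$. For the reverse inclusion, take $w \in \gen{\Ati_{\!1}} \cap \gen{\Ati_{\!2}}$ and let $\tilde w \in (X^{\pm})^*$ be the reduced word representing $w$. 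The key observation is that in a deterministic automaton any $\bp$-walk whose label represents $w$ can be shortened to a reduced $\bp$-walk reading \emph{exactly} $\tilde w$: whenever such a walk backtracks, passing through an arc $\edgi$ immediately followed by an arc reading the inverse label, determinism forces that second arc to be $\edgi^{-1}$, so the backtracking pair can be deleted, and one induces on the length. Applying this in $\Ati_{\!1}$ and in $\Ati_{\!2}$ yields $\bp_i$-walks $\gamma_i$ reading the same word $\tilde w$; in particular $\gamma_1$ and $\gamma_2$ have equal length, so their coordinatewise pairing is a legitimate $(\bp_1,\bp_2)$-walk in $\Ati_{\!1} \times \Ati_{\!2}$ reading $\tilde w$, whence $w \in \gen{\Ati_{\!1} \times \Ati_{\!2}}$. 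This establishes the identity. The main (and essentially only) obstacle is this last length-matching step: it is false without determinism, and makes clear why the hypothesis is needed.
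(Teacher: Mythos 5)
Your proof is correct and is exactly the standard verification the paper has in mind: the lemma is stated there without proof as an ``easily checkable'' fact, and your three steps --- coordinatewise determinism, label-preserving projections for the easy inclusion, and the reduction of $\bp$-walks to ones reading the reduced word $\tilde w$ (where determinism forces a backtracking letter-pair to be a mutually inverse arc-pair) so that equal-length walks can be paired --- are precisely that check. One cosmetic slip: the determinism you invoke is the paper's (unlabeled) definition of deterministic $X$-automata, not \Cref{def: normalized automaton}, so that cross-reference should be corrected.
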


However, in general, the product of two core automata is not necessarily core (not even connected); so we need to take the core to reach the Stallings automaton of the intersection.

\begin{cor}\label{cor: pb}
Let $H_1,H_2\leqslant \Free[X]$, then $\stallings{H_1\cap H_2,X}=\core (\stallings{H_1,X} \times \stallings{H_2,X})$. \qed
\end{cor}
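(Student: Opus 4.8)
The plan is to assemble the statement from the two lemmas that precede it, namely \Cref{lem: product intersection} and the standard fact that passing to the core does not change the recognized subgroup (recorded in the discussion of core automata: $\gen{\core(\Ati)} = \gen{\Ati}$). First I would observe that $\stallings{H_1,X}$ and $\stallings{H_2,X}$ are deterministic $X$-automata by definition, so by \Cref{lem: product intersection} their product $\stallings{H_1,X}\times \stallings{H_2,X}$ is again deterministic and recognizes $\gen{\stallings{H_1,X}}\cap \gen{\stallings{H_2,X}} = H_1\cap H_2$. Since the core of any automaton recognizes the same subgroup, $\core(\stallings{H_1,X}\times \stallings{H_2,X})$ is a deterministic automaton that is also core, hence reduced, and it still recognizes $H_1\cap H_2$.

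The second step is to invoke uniqueness. By \Cref{thm: Stallings bijection} the map $H\mapsto \stallings{H,X}$ is a bijection onto (isomorphism classes of) reduced $X$-automata, so there is exactly one reduced $X$-automaton recognizing $H_1\cap H_2$, namely $\stallings{H_1\cap H_2,X}$. Since we have just exhibited $\core(\stallings{H_1,X}\times \stallings{H_2,X})$ as a reduced $X$-automaton recognizing $H_1\cap H_2$, the two must coincide (up to isomorphism), which is precisely the claimed identity.

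There is essentially no obstacle here: the only point requiring a little care is that being core together with deterministic really does give ``reduced'' in the sense needed to apply the bijection, and that taking the core preserves determinism — but the latter is immediate since the core is a subautomaton and removing arcs and vertices cannot create two equally-labelled arcs at a vertex where there were none. I would also remark, as the corollary implicitly does, that $H_1\cap H_2$ is finitely generated whenever $H_1,H_2$ are, since the product of two finite automata is finite and so is its core; this recovers the Howson property of \Cref{thm: Howson} and makes the construction algorithmic. So the proof is just: determinism of the product (\Cref{lem: product intersection}) $+$ core preserves the recognized subgroup $+$ uniqueness in the Stallings bijection (\Cref{thm: Stallings bijection}).
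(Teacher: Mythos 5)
Your argument is correct and is exactly the one the paper intends (the corollary is stated with a \qed precisely because it follows from \Cref{lem: product intersection}, the fact that $\gen{\core(\Ati)}=\gen{\Ati}$, determinism being inherited by subautomata, and the uniqueness in \Cref{thm: Stallings bijection}). Your closing remark about finiteness and Howson's theorem likewise matches the sentence the paper places immediately after the corollary, so there is nothing to add or fix.
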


So, if $H_1$ and $H_2$ are finitely generated, then (from \Cref{thm: Stallings bijection}) $\stallings{H_1,X}$ and $\stallings{H_2,X}$ are finite and computable; hence, $\stallings{H_1 \cap H_2,X}$ is finite and computable too. This proves~\Cref{thm: Howson}.

After Howson's result, the quest for bounds for the rank of the intersection in terms of the ranks of the intersecting subgroups became a popular question in geometric group theory. Concretely, in 1956 \citeauthor{NeumannIntersectionFinitelyGenerated1956} proved that
$\rk(H_1 \cap H_2) - 1 \leq 2(\rk (H_1) - 1) (\rk (H_2) -1)$ for any pair of finitely generated subgroups $1\neq H_1,H_2 \leqslant \Free[X]$, and conjectured that the factor `2' can be removed; see~\cite{NeumannIntersectionFinitelyGenerated1956}.
After many unsuccessful attempts and partial results, two correct (and unrelated) proofs appeared almost simultaneously more than fifty years later (see~\cite{FriedmanSheavesGraphsTheir2015,MineyevSubmultiplicativityHannaNeumann2012} and the remarkable unpublished simplification in \cite{DicksSimplifiedMineyevProof2012}),
and a third one shortly after (see~\cite{Jaikin-ZapirainApproximationSubgroupsFinite2017}).

In \cite{BaumslagIntersectionsFinitelyGenerated1966}, \citeauthor{BaumslagIntersectionsFinitelyGenerated1966} extended Howson's result by showing that the free product of Howson groups is again Howson. However, the same is not true for direct products: Moldavanski (see~\cite{BurnsIntersectionDoubleCosets1998}) already showed that, in $\Free[\set{x,y}] \times \ZZ$, the intersection of the easy looking subgroups $\gen{xt,y}$ and $\gen{x,y}$ is the normal closure of $y$ in $\Free[\set{x,y}]$,
which is not finitely generated; see \Cref{ssec: Moldavanski's example} below for our geometric interpretation of this interesting example. Therefore, in this context the Subgroup Intersection Problem $\SIP(\FTA)$ emerges as a natural and interesting question, specially the decision part (which trivializes in the free case).

The purpose of the present section is to solve $\SIP(\FTA)$ using our enriched version of Stallings automata (\Cref{thm: enriched Stallings bijection}). We approach the problem from a similar perspective to that used in the solution to $\SIP(\Fn)$: in particular, we shall adapt the definition of product of two finite automata to the enriched setting, and obtain an enriched version for \Cref{lem: product intersection}. However, crucial differences must appear with respect to the free case because the situation is intrinsically different, now with $\FTA$ not being Howson.

\begin{defn}\label{def: product of automata2}
Let $\Eti_{\!1} =(\dGri_{\!1}, \vec{\llab}^{\scriptscriptstyle{1}},\bp_1, L_1)$ and $\Eti_{\!2} = (\dGri_{\!2}, \vec{\llab}^{\scriptscriptstyle{2}}, \bp_2, L_2)$ be two enriched automata. Their \emph{product}, denoted by $\Eti_{\!1} \times \Eti_{\!2}$, consists of the product of their respective skeletons $\sk(\Eti_{\!1}) \times \sk (\Eti_{\!2})$ \emph{doubly enriched} with the abelian labeling coming from each factor. That is, for every arc $(\edgi_1,\edgi_2) $ in $\sk(\Eti_{\!1}) \times \sk (\Eti_{\!2})$, and $i =1,2$, we define $\lab_i(\edgi_1,\edgi_2) = (\lab^{\scriptscriptstyle{1}}_i(\edgi_1), \lab^{\scriptscriptstyle{2}}_i(\edgi_2))$; and we attach the pair of subgroups $(L_1,L_2)$ to the basepoint $(\bp_1, \bp_2)$; see~\Cref{fig doubly enriched arc}.
\end{defn}

\begin{figure}[H]
    \centering
\begin{tikzpicture}[shorten >=1pt, node distance=1.75cm and 1.75cm, on grid,auto,>=stealth']
   \node (00) {};
   \node[state,accepting] (0) [below = 0.5 of 00]{};
   \node[] (bp1) [below right =0.1 and 0.15 of 0] {$\scriptscriptstyle{1}$};
   \node[state,accepting] (0') [right = 0.5 of 00]{};
   \node[] (bp1) [below right =0.1 and 0.15 of 0'] {$\scriptscriptstyle{2}$};
   \node[] (L') [left = 0.3 of 0]{$\scriptstyle{L_1}$};
   \node[] (L) [above = 0.3 of 0']{$\scriptstyle{L_2}$};
   \node[state,accepting,blue] (00') [right = 0.5 of 0]{};
   \node[blue] (LL') [right = 0.6 of 00']{$\B{\scriptstyle{(L_1,L_2)}}$};
   \node[state] (1) [below = 1 of 0] {};
   \node[state] (2) [below = of 1] {};
   \node[state] (1') [right = 1 of 0'] {};
   \node[state] (2') [right = 1.75 of 1'] {};
   \node[state,blue] (11') [below right = 1 and 1 of 00'] {};
   \node[state,blue] (22') [below right = of 11'] {};

   \path[dashed]
       (0') edge[]
            (1');
   \path[dashed]
       (0) edge[]
            (1);
   \path[dashed,blue]
       (00') edge[]
            (11');
   \path[->]
       (1') edge[]
            node[pos=0.5,below=-.1mm] {$x$}
            node[pos=0.10,above=-.1mm] {$\vect{a_2}$}
            node[pos=0.83,above=-.1mm] {$\vect{b_2}$}
            (2');
   \path[->]
        (1) edge[]
            node[pos=0.5,right=-.1mm] {$x$}
            node[pos=0.10,left=-.1mm] {$\vect{a_1}$}
            node[pos=0.8,left=-.1mm] {$\vect{b_1}$}
            (2);
   \path[->,blue]
        (11') edge[]
            node[pos=0.5,left] {$x$}
            node[pos=0.05,right=0.3mm] {$(\vect{a_1},\vect{a_2})$}
            node[pos=0.75,right=0.3mm] {$(\vect{b_1},\vect{b_2})$}
            (22');
\end{tikzpicture}
    \caption{Scheme of the product (in blue) of two enriched automata (in black)}
    \label{fig doubly enriched arc}
\end{figure}

So, technically, this product is a $(\AA \!\times \AA)$\!-enriched $X$-automaton with a pair of subgroups of $\AA$ (instead of a subgroup of $\AA \!\times \AA$) attached to the basepoint, a \defin{doubly-enriched automaton}, for short. Walks, and labels of walks in doubly-enriched automata are defined in the natural way. As in the enriched case, the skeleton of a doubly-enriched $X$-automaton is the $X$-automaton obtained after removing from it all the (now double) abelian mass. The notions of connectedness, core, and normalization are extended accordingly.

\begin{rem} \label{rem: strict inclusion}
If $\Eti_1,\Eti_{\!2}$ are Stallings automata recognizing respectively $\HH_1,\HH_2\leqslant \FTA$, then it is clear that $\core (\sk(\Eti_{\!1} \times \Eti_{\!2}))=\core (\sk(\Eti_{\!1})\times \sk(\Eti_{\!2})) = \Stallings {\HH_1\prF \cap \HH_2\prF, X}$. A crucial detail here is that the inclusion $(\HH_1 \cap \HH_2)\prF\leqslant \HH_1\prF \cap \HH_2\prF$ (of subgroups of~$\Free[X]$) is \emph{not} necessarily an equality. Hence, $\core (\sk(\Eti_{\!1} \times \Eti_{\!2}))$ is \emph{not}, in general, equal to $\Stallings {(\HH_1 \cap \HH_2)\prF, X}$. So, further analysis is needed to construct this last automaton, and subsequently $\Stallings {\HH_1 \cap \HH_2, X}$. Observe also that, if $\HH_1, \HH_2$ are finitely generated, then $\HH_1\prF$ and $\HH_2\prF$ (and hence $\HH_1\prF \cap \HH_2\prF$) are so; but $(\HH_1 \cap \HH_2)\prF$ is a (possibly strict) subgroup of the latter, \emph{and may very well not be finitely generated}.
See the characterization in \Cref{prop: fg int characterization FATF}, and Examples \ref{ssec: Moldavanski's example} and \ref{ssec: parameterized example} (Case 2).
\end{rem}

As in the free case, the (core of the) product of enriched automata encodes all the information about the intersection. However, in this case, the resulting doubly-enriched automaton is not a genuine Stallings automaton. Below, we state the enriched version of \Cref{lem: product intersection}, which is clear again by inspection.

\begin{lem} \label{lem: intersection of enriched subgroups}
Let $\Eti_{\!1}=(\dGri_{\!1},\vec{\llab}^{\scriptscriptstyle{1}},L_1)$ and $\Eti_{\!2}=(\dGri_{\!2},\vec{\llab}^{\scriptscriptstyle{2}},L_2)$ be two enriched Stallings automata recognizing the subgroups $\HH_1,\HH_2 \leqslant \Fta = \FTA$, respectively. Then, the intersection $\HH_1 \cap \HH_2$ is precisely the set of elements in $\FTA$ (with free part in $\HH_1 \prF \cap \HH_2 \prF$) that are component-wise readable in the product $\Eti_{\!1} \times \Eti_{\!2}$ modulo the corresponding base subgroups $L_1,L_2$, respectively. More precisely, $\fta{u}{a}$ belongs to $\HH_1\cap \HH_2$ if and only if there is a $(\bp_1, \bp_2)$-walk in $\Eti_{\!1} \times \Eti_{\!2}$ whose label $\fta{u}{(b_1,b_2)}$ satisfies simultaneously $\vect{b_1}-\vect{a}\in L_1$ and $\vect{b_2}-\vect{a}\in L_2$.  \qed
\end{lem}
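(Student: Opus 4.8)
The plan is to unwind the definitions on both sides and check the two inclusions, essentially mirroring the proof of \Cref{lem: product intersection} while keeping careful track of the abelian bookkeeping. Recall (\Cref{def: product of automata}, \Cref{def: product of automata2}) that $\Eti_{\!1}\times\Eti_{\!2}$ has vertex set $\Verts\Eti_{\!1}\times\Verts\Eti_{\!2}$, and that a walk $\walki$ in it projects component-wise to a walk $\walki_1$ in $\sk\Eti_{\!1}$ and a walk $\walki_2$ in $\sk\Eti_{\!2}$ having the \emph{same} free label $\lab_X(\walki)=\lab_X(\walki_1)=\lab_X(\walki_2)$; conversely, any pair of walks in $\sk\Eti_{\!1}$ and $\sk\Eti_{\!2}$ (starting at $\bp_1$, $\bp_2$) reading the same free word assembles into a walk in the product. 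The enriched labels pair up coordinate-wise, so the label of such a $\walki$ is $\fta{u}{(\vect{b_1},\vect{b_2})}$ exactly when $\walki_i$ has enriched label $\fta{u}{\vect{b_i}}$ in $\Eti_{\!i}$, for $i=1,2$. All of this is the routine ``easy by inspection'' content the statement alludes to.

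First I would prove the reverse inclusion (the ``if'' direction). Suppose there is a $(\bp_1,\bp_2)$-walk $\walki$ in $\Eti_{\!1}\times\Eti_{\!2}$ with label $\fta{u}{(\vect{b_1},\vect{b_2})}$ such that $\vect{b_1}-\vect{a}\in L_1$ and $\vect{b_2}-\vect{a}\in L_2$. Projecting to the first coordinate gives a $\bp_1$-walk in $\Eti_{\!1}$ with enriched label $\fta{u}{\vect{b_1}}$, hence $\fta{u}{\vect{b_1}}\in\gen{\Eti_{\!1}}=\HH_1$; since $\vect{b_1}-\vect{a}\in L_1=\HH_1\cap\AA$ (using \Cref{prop: enriched Stallings parts}) and $L_1$ is recognized via the basepoint subgroup, $\fta{u}{\vect{a}}=\fta{u}{\vect{b_1}}\cdot t^{\vect{a}-\vect{b_1}}\in\HH_1$. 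Symmetrically $\fta{u}{\vect{a}}\in\HH_2$, so $\fta{u}{\vect{a}}\in\HH_1\cap\HH_2$, as required.

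For the forward inclusion, take $\fta{u}{\vect{a}}\in\HH_1\cap\HH_2$. From $\fta{u}{\vect{a}}\in\HH_i=\gen{\Eti_{\!i}}$ we get, by the definition of the recognized subgroup, a $\bp_i$-walk $\walki_i$ in $\Eti_{\!i}$ whose enriched label is $\fta{u}{\vect{a}}$ \emph{modulo} $L_i$; i.e.\ $\walki_i$ has free label $u$ (so in particular $u\in\HH_1\prF\cap\HH_2\prF$) and enriched label $\fta{u}{\vect{b_i}}$ with $\vect{b_i}-\vect{a}\in L_i$. Since $\walki_1$ and $\walki_2$ read the same free word $u$, they assemble into a $(\bp_1,\bp_2)$-walk $\walki$ in $\Eti_{\!1}\times\Eti_{\!2}$ with label $\fta{u}{(\vect{b_1},\vect{b_2})}$, and by construction $\vect{b_1}-\vect{a}\in L_1$, $\vect{b_2}-\vect{a}\in L_2$. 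This is exactly the asserted condition, completing the proof.

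The only point needing a little care — and the closest thing to an obstacle — is the handling of the basepoint subgroups $L_i$ when passing between ``recognized by $\Eti_{\!i}$'' and ``readable as a walk label in $\sk\Eti_{\!i}$''. One must use that in an enriched automaton the $\bp$-walk labels are taken modulo $L_{\Eti}$ (rule (3) of the labelling convention, i.e.\ the ``infinitesimal loops'' at $\bp$), together with the fact from \Cref{prop: enriched Stallings parts} that for a normalized $\Eti_{\!i}$ one has $L_i=\HH_i\cap\AA$, so that absorbing a vector of $L_i$ into the abelian part of an element of $\HH_i$ keeps it in $\HH_i$. Everything else is the component-wise read/assemble dictionary for the categorical product, which is the same as in the classical case.
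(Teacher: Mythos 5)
Your proof is correct and takes essentially the same approach as the paper: the paper states this lemma with no written proof, declaring it ``clear again by inspection'' as the enriched analogue of \Cref{lem: product intersection}, and your argument is exactly that inspection written out (project and assemble walks component-wise, and absorb the basepoint subgroups $L_1,L_2$ into the abelian parts). The one point you leave implicit --- that the two $\bp_i$-walks can be taken to spell the same letter sequence, which follows since the skeletons are deterministic and removing backtracking does not change the enriched label --- is at the same level of detail the paper itself omits.
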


\begin{defn} \label{def: equalization}
Let $\Eti$ be a doubly enriched automaton with basepoint subgroups $(L_1,L_2)$. We say that $\Eti$ is \defin{equalizable} if the label $\fta{w}{(a,b)}$ of any $\bp$-walk in $\Eti$ satisfies $(\vect{a} + L_1) \cap (\vect{b} + L_2) \neq \varnothing$. Note that, when $\Eti$ is finite, this can be algorithmically tested by normalizing \wrt some previously chosen spanning tree $\Treei$ and, for every arc $\edgi$ with terminal abelian label $(\vect{a},\vect{b}) \neq (\vect{0},\vect{0})$, checking whether $(\vect{a} + L_1) \cap (\vect{b} + L_2) \neq \varnothing$
(this is enough since, after normalization, $(\vect{a},\vect{b})$ is also the abelian label of the petal $\Tpetal{\edgi}$). If $\Eti$ is equalizable, after normalizing \wrt some spanning tree $\Treei$, we can compute a witness $\vect{c} \in (\vect{a} + L_1) \cap (\vect{b} + L_2) $ for each arc outside $\Treei$, and replace the double labeling $(\vect{a},\vect{b})$ with $\vect{c}$;
finally replace the pair of basepoint subgroups $(L_1,L_2)$ with its intersection $L_1 \cap L_2$. The resulting enriched automaton is called the \defin{equalization} of $\Eti$ \wrt \!$\Treei$ (or the \defin{$\Treei$-equalization} of $\Eti$).
\end{defn}

\begin{rem}
The product
$\stallings{\HH_1} \times \Stallings{\HH_2}$
of the Stallings automata of two subgroups
$\HH_1,\HH_2 \leqslant \FTA$ is equalizable if and only if every word in $\HH_1 \prF \cap \HH_2 \prF$ admits compatible completions in $\HH_1$ and $\HH_2$; \ie if for every $w \in \HH_1 \prF \cap \HH_2 \prF$, $ \cab{w}{\HH_1} \cap \cab{w}{\HH_2}  \neq \varnothing$. That is, if and only if
$(\HH_1 \cap \HH_2) \prF = \HH_1 \prF \cap \HH_2 \prF$, which, as explained in \Cref{rem: strict inclusion}, is not always the case.
\end{rem}

Let $\set{\fta{u_{1}}{a_{1,1}},\ldots,\fta{u_{p_1}}{a_{1,p_1}}; \T^{\vect{b_{1,1}}},\ldots,\T^{\vect{b_{1,q_1}}} }$ and $\set{\fta{v_{1}}{a_{2,1}},\ldots,\fta{v_{p_2}}{a_{2,p_2}}; \T^{\vect{b_{2,1}}},\ldots,\T^{\vect{b_{2,q_2}}} }$ be \emph{finite} bases for $\HH_1$ and $\HH_2$, respectively, and let 
$
\basis =\set{w_1, \ldots,w_r}$ be 
a free basis for $\HH_1 \prF \cap \HH_2 \prF$ (all written in terms of the original generators $X,T$ for $\FTA$). This means that, for $i=1,2$, $L_i =\smash{\HH_i \cap \AA = \allowbreak \gen{\T^{\vect{b_{i,1}}},\ldots,\T^{\vect{b_{i,q_i}}}}}$, $\smash{\HH_1 \prF \isom \Free[p_1] = \Free[\set{u_1,\ldots,u_{p_1}}]}$, $\smash{\HH_2 \prF \isom \Free[p_2] =\Free[\set{v_1,\ldots,v_{p_2}}]}$, and $\smash{\HH_1 \prF \cap \HH_2 \prF \isom \Free[r] =\Free[\set{w_1,\ldots,w_{r}}]}$ (note that since both $p_1$ and $p_2$ are finite, $r$ is also finite). Now, consider the following homomorphisms and matrices which compose the diagram in \Cref{fig: intersection diagram big}:
\begin{itemize}
\item $\phi$ (\resp $\phi_1,\phi_2$) is the isomorphism sending each word in $\HH_1 \prF \cap \HH_2 \prF$ (\resp $\HH_1 \prF$, $\HH_2 \prF$)
in the original basis $\set{x_1,\ldots,x_n}$
to its expression in the basis $\set{w_1,\ldots,w_r}$ (\resp $\set{u_1,\ldots,u_{p_1}}$, $\set{v_1,\ldots,v_{p_2}}$);
\item $\rho$ (\resp $\rho_1,\rho_2$) is the abelianization map of $\Free[r]$ (\resp $\Free[p_1],\Free[p_2]$), \emph{not to be confused} with the corresponding restrictions of the global abelianization map $\Free[n]\onto \ZZ^n$;
\item $\matr{B_i}$ is the abelianization of the inclusion map $\HH_1\prF \cap \HH_2\prF \hookrightarrow \HH_i\prF$ (after the change of bases $\phi$ and $\phi_i$), $i=1,2$; note that, although these inclusions are injective maps, the $\matr{B_i}$'s need not be so;
\item $\matr{A_i}$ is the $p_i \times m$ integer matrix having as $j$-th row the vector $\vect{a_{i,j}} \in \AA$, $i=1,2$;
\item $\matr{C_i} \coloneqq \matr{B_i} \matr{A_i}$, $i=1,2$
(where every column of the result must be interpreted modulo the corresponding torsion), and $\matr{D} \coloneqq \matr{C_1}-\matr{C_2}$ is the so-called \defin{difference} matrix.
\end{itemize}

\begin{figure}[H]
\centering
\begin{tikzcd}[row sep=25pt, column sep=0pt,ampersand replacement=\&]
\&\phantom{\isom \Free[\set{x_1,\ldots,x_n}]} \Fn = \Free[\set{x_1,\ldots,x_n}] \& \\[-30pt]
\& \rotatebox[origin=c]{90}{$\leqslant$} \& \\[-28pt]
\HH_1\prF 
\& \HH_1\prF \cap  \HH_2\prF 
\arrow[l,hook'] \arrow[r,hook]
\arrow[rddd,phantom,"\scriptstyle{///}"] \arrow[lddd,phantom,"\scriptstyle{///}"]
    \& \HH_2\prF 
    \\[-28pt]
    \scriptstyle{\phi_1}\ \rotatebox[origin=c]{90}{$\isom$} \ \phantom{\scriptstyle{\phi_1}}
    \&
    \scriptstyle{\phi}\ \rotatebox[origin=c]{90}{$\isom$} \ \phantom{\scriptstyle{\phi}}
    \&
    \scriptstyle{\phi_2}\ \rotatebox[origin=c]{90}{$\isom$} \ \phantom{\scriptstyle{\phi_2}}\\[-28pt]
    \Free[p_1] \arrow[d,->>,"\rho_1"']
    \&
    \Free[r]
    \arrow[d,->>,"\rho"']
    \& \Free[p_2] \arrow[d,->>,"\rho_2"']
    \\
    \ZZ^{p_1} \arrow[rd,->,"\matr{A_1}"']
    \& \ZZ^{r}
    \arrow[d,->,"\matr{D}"] \arrow[l,->,"\matr{B_1}"'] \arrow[r,->,"\matr{B_2}"]\&
    \ZZ^{p_2} \arrow[ld,->,"\matr{A_2}"]\\
    \& \AA\\[-28pt]
    \& \rotatebox[origin=c]{90}{$\leqslant$} \& \\[-28pt]
    \& L_1 \,\leqslant\, L_1 + L_2 \, \geqslant\, L_2 \&
   \end{tikzcd}
   \caption{Intersection diagram I}
   \label{fig: intersection diagram big}
\end{figure}

\begin{rem} \label{rem: case r=0}
The above discussion includes the possibility $r=0$ (corresponding to $\HH_1 \prF \cap \HH_2 \prF = \Trivial$). In this case, $\basis = \varnothing$ and the maps $\rho, \matr{B_1}, \matr{B_2}$ and $\matr{D}$ in \Cref{fig: intersection diagram big} are all trivial.
\end{rem}

\begin{prop} \label{prop: int prF = M preab}
 Let $\HH_1, \HH_2 $ be finitely generated subgroups of $\Fta$. With the above notation,
\begin{equation}\label{eq: int prF = M preab}
(\HH_1 \cap \HH_2) \prF
\,\isom\,
(L_1 + L_2) \matr{D}^{-1} \rho^{-1} \,\normaleq\, \Free[r]\,,
\end{equation}
where $\matr{D} = \matr{B_1} \matr{A_1} - \matr{B_2} \matr{A_2}$, and $\rho\colon \Free[r] \onto \ZZ^{r}$ is the abelianization map; see~\Cref{fig: intersection diagram big,fig: intersection diagram small}.
\end{prop}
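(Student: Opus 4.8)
The plan is to pin down exactly which elements of $\HH_1\prF \cap \HH_2\prF$ occur as free parts of elements of $\HH_1 \cap \HH_2$, and then translate that description through the diagram in \Cref{fig: intersection diagram big}. First note that, since $\HH_1,\HH_2$ are finitely generated, so are $\HH_1\prF$ and $\HH_2\prF$ (by \Cref{cor: H fg iff Hpi fg}), and hence so is $\HH_1\prF \cap \HH_2\prF$ (free groups are Howson, \Cref{thm: Howson}); thus $r < \infty$ and all the objects in \Cref{fig: intersection diagram big} are finitely generated, in particular $L_1 + L_2 \leqslant \AA$. The degenerate case $r = 0$ is immediate (see \Cref{rem: case r=0}: both sides are trivial), so assume $r \geqslant 1$.

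The elementary observation is that, for $w \in \Fn$,
\[
w \in (\HH_1 \cap \HH_2)\prF
\quad\Longleftrightarrow\quad
w \in \HH_1\prF \cap \HH_2\prF \ \text{ and } \ \cab{w}{\HH_1} \cap \cab{w}{\HH_2} \neq \varnothing ,
\]
since $\fta{w}{a} \in \HH_1 \cap \HH_2$ says precisely that $\vect{a}$ is a completion of $w$ simultaneously in $\HH_1$ and in $\HH_2$. By \Cref{lem: completion is a coset}, for $w \in \HH_i\prF$ we have $\cab{w}{\HH_i} = w\,\phi_i\rho_i\matr{A_i} + L_i$, and two cosets $\vect{c_1} + L_1$ and $\vect{c_2} + L_2$ intersect if and only if $\vect{c_1} - \vect{c_2} \in L_1 + L_2$. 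Hence, for $w \in \HH_1\prF \cap \HH_2\prF$,
\[
w \in (\HH_1 \cap \HH_2)\prF
\quad\Longleftrightarrow\quad
w\,\phi_1\rho_1\matr{A_1} - w\,\phi_2\rho_2\matr{A_2} \in L_1 + L_2 .
\]

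The next step is to rewrite the left-hand side via the commutativity of \Cref{fig: intersection diagram big}. By the very definition of $\matr{B_i}$ as the abelianization of the inclusion $\HH_1\prF \cap \HH_2\prF \hookrightarrow \HH_i\prF$ after the change of bases $\phi$ and $\phi_i$, one gets $w\,\phi_i\rho_i = w\,\phi\rho\,\matr{B_i}$ for every $w \in \HH_1\prF \cap \HH_2\prF$; here one should check that $\matr{B_i}$ is well defined, which holds because $\ker(\phi\rho)$ is the commutator subgroup of $\HH_1\prF\cap\HH_2\prF$ and hence maps to $\vect{0}$ after including into $\HH_i\prF$ and abelianizing. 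Consequently $w\,\phi_i\rho_i\matr{A_i} = w\,\phi\rho\,\matr{C_i}$ (with $\matr{C_i} = \matr{B_i}\matr{A_i}$, the columns read modulo the torsion of $\AA$ so that $\matr{C_i}$ represents a genuine homomorphism $\ZZ^r \to \AA$), and the condition becomes $w\,\phi\rho\,\matr{D} \in L_1 + L_2$, i.e.\ $w\,\phi \in (L_1 + L_2)\matr{D}^{-1}\rho^{-1}$. Since $\phi$ restricts to an isomorphism from $\HH_1\prF\cap\HH_2\prF$ onto $\Free[r]$, this yields $(\HH_1 \cap \HH_2)\prF\,\phi = (L_1+L_2)\matr{D}^{-1}\rho^{-1}$, which is the isomorphism in \eqref{eq: int prF = M preab}. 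Finally, $(L_1+L_2)\matr{D}^{-1}$ is a subgroup of $\ZZ^r$ (the full $\matr{D}$-preimage of $L_1+L_2 \leqslant \AA$), so its $\rho$-preimage contains $\ker\rho$, the commutator subgroup of $\Free[r]$, and is therefore normal in $\Free[r]$.

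I do not expect a real obstacle: once the characterization ``$w$ lifts iff $\cab{w}{\HH_1}$ and $\cab{w}{\HH_2}$ overlap'' is in place, the rest is bookkeeping along \Cref{fig: intersection diagram big}. The two points that genuinely need care are (i) verifying that $\matr{B_i}$ is well defined and really computes $w\,\phi_i\rho_i$ from $w\,\phi\rho$ on $\HH_1\prF\cap\HH_2\prF$, so that the diagram literally commutes, and (ii) handling the torsion of $\AA$ consistently, so that $\matr{C_i} = \matr{B_i}\matr{A_i}$ and $\matr{D} = \matr{C_1}-\matr{C_2}$ are well-defined homomorphisms $\ZZ^r \to \AA$ and $\matr{D}^{-1}$ unambiguously denotes the preimage of a subgroup.
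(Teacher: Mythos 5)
Your proof is correct and follows essentially the same route as the paper: characterize $(\HH_1\cap\HH_2)\prF$ as the words in $\HH_1\prF\cap\HH_2\prF$ with overlapping completions, compute $\cab{w}{\HH_i} = w\phi\rho\matr{B_i}\matr{A_i}+L_i$ via \Cref{lem: completion is a coset} and the commutativity of \Cref{fig: intersection diagram big}, and pull back through $\matr{D}$, $\rho$ and $\phi$, with normality coming from the abelianity of $(L_1+L_2)\matr{D}^{-1}$ and the surjectivity of $\rho$. Your extra checks (finiteness of $r$, the $r=0$ case, well-definedness of $\matr{B_i}$ modulo torsion) are sensible housekeeping but do not change the argument.
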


\begin{proof}
By definition, $(\HH_1 \cap \HH_2)\prF$ consists exactly of the elements $w\in \HH_1 \prF \cap \HH_2 \prF$ admitting compatible abelian completions in $\HH_1$ and $\HH_2$, \ie such that $\cab{w}{\HH_1}\cap \cab{w}{\HH_2}\neq \varnothing$. On the other side, from \Cref{lem: completion is a coset} and the commutativities in \Cref{fig: intersection diagram big} it is clear that the abelian completion of an element $w \in \HH_1 \prF \cap \HH_2 \prF$ in $\HH_i$ ($i = 1,2$) is $\cab{w}{\HH_i} =w \phi_i \rho_i \matr{A_i} + L_i = w \phi \rho \matr{B_i} \matr{A_i} + L_i$. Hence,
\begin{align*}
    (\HH_1 \cap \HH_2)\prF
    &\,=\,
    \set{w \in \HH_1 \prF \cap \HH_2\prF \st \cab{w}{\HH_1} \cap \cab{w}{\HH_2}\neq \varnothing
    }\\
    &\,=\,
    \set{w \in \HH_1 \prF \cap \HH_2\prF \st
    (w \phi \rho \matr{B_1} \matr{A_1} + L_1) \cap (w \phi \rho \matr{B_2} \matr{A_2} + L_2)
    \neq \varnothing
    }\\
    &\,=\,
    \set{w \in \HH_1 \prF \cap \HH_2\prF \st
    w \phi \rho (\matr{B_1} \matr{A_1} - \matr{B_2} \matr{A_2}) \in L_1 + L_2
    }\\
    &\,=\,
    (L_1 + L_2) (\matr{B_1} \matr{A_1} - \matr{B_2} \matr{A_2})^{-1} \rho^{-1} \phi^{-1} \\
    &\,\isom\,
    (L_1 + L_2) \matr{D}^{-1} \rho^{-1} \, .
\end{align*}
Finally, the normality of $(L_1 + L_2) \matr{D}^{-1} \rho^{-1}$ in $\Free[r]$ follows immediately from the abelianity of $(L_1 + L_2)\matr{D}^{-1}$ and the surjectivity of the abelianization $\rho$.
\end{proof}

The key point in~\Cref{eq: int prF = M preab} is that it allows to express $(\HH_1 \cap \HH_2) \prF$ (and so, its finitely generated character
) in abelian terms. Now, we are ready to establish the claimed link between Stallings automata and Cayley digraphs of abelian groups. Recall that the vertical inclusions between the two rows in \Cref{fig: intersection diagram small} are all normal (since $\AA$ and $\ZZ^{r}$ are abelian, and $\rho$ is onto).
\begin{figure}[H]
\centering
\begin{tikzcd}[row sep=-3pt, column sep=17pt]
   \Free[n] \, \geqslant \
   \HH_1 \prF \cap \HH_2 \prF 
   \ \overset{\scriptstyle{\phi}}{\isom}\hspace{-50pt}
   & \Free[r]  \arrow[r,->>,"\rho"]
   & \ZZ^{r}  \arrow[r,"\matr{D}"] & \AA\\
   & \rotatebox[origin=c]{90}{$\normaleq$}&\rotatebox[origin=c]{90}{$\normaleq$}
   &\rotatebox[origin=c]{90}{$\normaleq$}\\[-5pt]
   (\HH_1 \cap \HH_2) \prF \ \overset{\scriptstyle{\phi}}{\isom} \hspace{-22pt}
   & \underbrace{(L_1 + L_2) \matr{D}^{-1} \rho^{-1}}_{M \rho^{-1}} 
   & \underbrace{(L_1 + L_2) \matr{D}^{-1}}_{M} \arrow[l,mapsto] & L_1 + L_2 \arrow[l,mapsto]
   \end{tikzcd}
    \caption{Intersection diagram II}
    \label{fig: intersection diagram small}
\end{figure}
Defining $M \coloneqq (L_1 + L_2) \matr{D}^{-1}\leqslant \ZZ^r$,
$s \coloneqq \rk(M)\leq r$,
and taking the respective quotient groups, we have
\begin{equation} \label{eq: chain of isomorphisms of quotient groups}
\HH_1\prF \cap \HH_2 \prF / (\HH_1 \cap \HH_2) \prF
\, \overset{\overline{\phi}}{\isom}\,
\Free[r] / M \rho\preim
\, \overset{\overline{\rho}}{\isom}\,
\ZZ^{r} / M \,.
\end{equation}
We call
$\matr{M}$ the $s\times r$ integer matrix having as rows the elements of some abelian basis for $M$
(note that $s\leq r$).
Then, the \defin{Smith normal form} of $\matr{M}$ is an integral $s\times r$ matrix $\matr{S}=\diag(\diagi_1,\ldots,\diagi_s)$, where $\diagi_1,\ldots,\diagi_s\in \ZZ\setminus \{0\}$, $\diagi_1 |\cdots |\diagi_s$, and $\matr{P}$ and $\matr{Q}$ are invertible matrices ($\matr{P}\in GL_{s}(\ZZ)$, $\matr{Q}\in GL_{r}(\ZZ)$) such that $\matr{P}\matr{M}\matr{Q}=\matr{S}$.
If we finally define
$\diagi_i \coloneqq 0$ for each $i=s+1,\ldots ,r$ (in case they exist), then
\begin{align} \label{eq: chain of isomorphisms of fg quotient groups}
\HH_1\prF \cap \HH_2 \prF / (\HH_1 \cap \HH_2) \prF
\,\isom\,
\ZZ^r /\gen{\matr{S}}
\,=\,
\textstyle{\bigoplus_{i=1}^{r} \, \ZZ /\diagi_i \ZZ}
\,=\,
(\bigoplus_{i=1}^{s} \, \ZZ /\diagi_i \ZZ)\,\oplus\, \ZZ^{r-s} \,.
\end{align}
Furthermore, the index of $(\HH_1 \cap \HH_2) \prF$ in $\HH_1 \prF \cap \HH_2 \prF$ is
\begin{equation} \label{eq: index = product deltas}
\Ind{\HH_1 \prF \cap \HH_2 \prF }{(\HH_1 \cap \HH_2) \prF}
  \,=\,
  \textstyle{\prod_{i=1}^{r} \,\Ind{\ZZ}{\diagi_i \ZZ} }
  \,=\,
  \left\{\!
  \begin{array}{ll}
  \diagi_1  \cdots  \diagi_s < \infty &\text{if } s=r\,, \\[3pt]
  \infty &\text{if } s<r \,.
  \end{array}
  \right.
\end{equation}
This allows us to interpret the Stallings automaton of $(\HH_1 \cap \HH_2)\prF$ as the Cayley multidigraph (a generalization of the classical Cayley digraph allowing repeated generators; see the precise definition below) of the finitely generated abelian group in \cref{eq: chain of isomorphisms of quotient groups}; and ultimately, relate the rank of the intersection $\HH_1 \cap \HH_2$ to the index of $M$ in~$\ZZ^r$.

\begin{defn}
Let $G$ be a group and let $\mset{h_i}_{i\in I}$ be a multiset of generators for $G$ (\ie a set of generators with possible repetitions). Then, the \defin{Cayley multidigraph} of $G$ \wrt $\mset{h_i}_{i\in I}$, denoted by $\cayley{G,\mset{h_i}_{i\in I}}$, is the multidigraph with vertex set $G$, and an $h_i$-arc $g \xarc{h_i\,} g h_i$ for every $g \in G$, and every $i\in I$. It is allowed that, for some $i\in I$, $h_i$ is the trivial element, hence producing loops labeled by $h_i$ in every vertex. Of course, if $\mset{h_i}_{i\in I}$ is a set, then $\cayley{G,\mset{h_i}_{i\in I}}$ is the standard Cayley digraph of $G$.
\end{defn}

\begin{thm}\label{thm: Stallings = Cayley}
Let $\HH_1,\HH_2$ be two finitely generated subgroups of $\Fta$.
Then, either $(\HH_1 \cap \HH_2)\prF$ is trivial, or
(with the above notation)
\begin{equation} \label{eq: Stallings = Cayley}
 \Stallings{(\HH_1 \cap \HH_2) \prF \,, \basis}
    \,\isom\,
    \Cayley{
    \textstyle{\bigoplus_{i=1}^{r} \ZZ/\diagi_i \ZZ} \,, \mset{\vect{e}_i \matr{Q}}_i }
    \, ,
\end{equation}
where $\basis =\set{w_1, \ldots, w_r}$
is a (finite) free basis for $\HH_1\prF \cap \HH_2\prF$, $\set{\vect{e}_1
,\ldots ,\vect{e}_r}$ is the canonical basis of $\ZZ^{r}$,
and $\mset{\vect{e}_i \matr{Q}}_{i=1,\ldots ,r}$ is the multiset consisting of the rows of $\matr{Q}$ (recall that $\matr{S} = \matr{P} \matr{M} \matr{Q}$)
interpreted as elements of $\bigoplus_{i=1}^{r} \ZZ / \diagi_i \ZZ$.
\end{thm}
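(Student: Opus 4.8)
The plan is to identify the Stallings automaton $\stallings{(\HH_1 \cap \HH_2)\prF, \basis}$ with a Schreier-type coset graph and then recognize that coset graph as the asserted Cayley multidigraph. Recall that $\stallings{N, \basis} = \core(\schreier{N, \basis})$, and that when $N \normaleq \Free[r]$ is normal, the Schreier automaton $\schreier{N,\basis}$ is \emph{already} core (every coset is reachable and the graph has no hanging trees, being vertex-transitive), so $\stallings{(\HH_1\cap\HH_2)\prF, \basis} \isom \schreier{(\HH_1\cap\HH_2)\prF, \basis}$. By \Cref{prop: int prF = M preab} we have, via $\phi$, that $(\HH_1 \cap \HH_2)\prF$ corresponds to the normal subgroup $M\rho^{-1} \normaleq \Free[r]$ with $M = (L_1+L_2)\matr{D}^{-1} \leqslant \ZZ^r$; hence its Schreier graph has vertex set the quotient group $\Free[r]/M\rho^{-1}$, which by \cref{eq: chain of isomorphisms of quotient groups,eq: chain of isomorphisms of fg quotient groups} is isomorphic (via $\overline{\phi}$ then $\overline{\rho}$, and then Smith normal form) to $\ZZ^r/\gen{\matr{S}} = \bigoplus_{i=1}^r \ZZ/\diagi_i\ZZ$.

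The key computation is to track what the free-basis generators $w_1,\ldots,w_r$ do to vertices under this chain of identifications. First I would observe that $\schreier{N,\basis}$ is, by its very definition, the Cayley digraph $\Cayley{\Free[r]/N, \mset{w_i N}_{i}}$ of the quotient group with respect to the images of the basis; the arc $gN \xarc{w_i} gw_i N$ is literally right-translation by $w_iN$. Now pass this through the isomorphism $\Free[r]/M\rho^{-1} \xrightarrow{\overline{\rho}} \ZZ^r/M$: the coset $w_i M\rho^{-1}$ maps to $w_i\rho + M = \vect{e}_i + M$, the $i$-th standard basis vector modulo $M$. So the Schreier graph becomes $\Cayley{\ZZ^r/M, \mset{\vect{e}_i + M}_i}$. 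Finally I would apply the Smith normal form change of coordinates: the isomorphism $\ZZ^r/M \isom \ZZ^r/\gen{\matr{S}} = \bigoplus_i \ZZ/\diagi_i\ZZ$ is induced by right-multiplication by $\matr{Q}$ (one checks $M\matr{Q} = \gen{\matr{M}}\matr{Q}$ maps onto $\gen{\matr{S}}$ since $\matr{P}\matr{M}\matr{Q} = \matr{S}$ with $\matr{P}$ invertible, so $\gen{\matr{M}\matr{Q}} = \gen{\matr{S}}$). Under this map $\vect{e}_i + M \mapsto \vect{e}_i\matr{Q} + \gen{\matr{S}}$, i.e. the $i$-th row of $\matr{Q}$ read in $\bigoplus_i \ZZ/\diagi_i\ZZ$. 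Combining the three isomorphisms of (pointed, labelled) graphs yields precisely \eqref{eq: Stallings = Cayley}.

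The one genuinely delicate point — and where I expect the main obstacle — is the passage through $\overline{\rho}$: we need to know that the quotient of $\Free[r]/M\rho^{-1}$ obtained by abelianization is exactly $\ZZ^r/M$ \emph{with the basis images $w_iN$ corresponding to $\vect{e}_i+M$}, and in particular that the multiset of generators is preserved with multiplicities (repeated rows of $\matr{Q}$ must stay repeated, which is why we need the \emph{multi}digraph notion). This is why the statement uses a free-basis $\basis$ of $\HH_1\prF\cap\HH_2\prF$ rather than an arbitrary generating set: it guarantees $\rho$ sends $\set{w_i}$ to the standard basis of $\ZZ^r$, so the labels are controlled. I would also need to dispose of the degenerate cases: if some $\diagi_i=\pm 1$ the corresponding factor $\ZZ/\diagi_i\ZZ$ is trivial and the generator $\vect{e}_i\matr{Q}$ contributes only loops (consistent with the Cayley multidigraph convention allowing trivial generators), and the excluded case where $(\HH_1\cap\HH_2)\prF$ is trivial corresponds to $M\rho^{-1} = \Free[r]$, i.e. $M = \ZZ^r$ (in particular $r=s$ and all $\diagi_i = \pm 1$), handled separately in the statement. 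The remaining verifications — that right-translation really does commute with the quotient maps, and that isomorphisms of groups induce isomorphisms of the associated Cayley (multi)digraphs respecting basepoints — are routine and I would leave them to the reader.
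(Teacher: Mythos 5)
Your route coincides with the paper's own proof: identify $(\HH_1\cap\HH_2)\prF$ with the normal subgroup $M\rho\preim\normaleq\Free[r]$ via \Cref{prop: int prF = M preab}, observe that its Stallings automaton equals its Schreier automaton, read the latter as the Cayley multidigraph of $\Free[r]/M\rho\preim\isom\ZZ^r/M$ \wrt the multiset $\mset{\vect{e}_i+M}_i$, and change coordinates by $\matr{Q}$ via the Smith normal form; this is exactly the chain \eqref{eq: St(cap) = St(Mpreab)}--\eqref{eq: Cay Z/delta}, multiset bookkeeping included.

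There is, however, a genuine flaw at the single point where the hypothesis that $(\HH_1\cap\HH_2)\prF$ is nontrivial must be used. Your claim that the Schreier automaton of a normal subgroup is automatically core ``being vertex-transitive'' is false: a vertex-transitive graph can be a tree. This happens for $N=\Trivial\normaleq\Free[r]$ (the Cayley graph of $\Free[r]$), and, within the present setting, precisely in the degenerate case $r=1$, $M=\set{\vect{0}}$, where the Schreier automaton of $M\rho\preim=\Trivial$ is the line (Cayley graph of $\ZZ$ on one generator) whose core is a single vertex. Moreover, you misidentify the excluded case: triviality of $(\HH_1\cap\HH_2)\prF$ corresponds to $M\rho\preim=\Trivial$, \ie $r=0$, or $r=1$ and $M=\set{\vect{0}}$ (\Cref{lem: int pr = 1}), \emph{not} to $M\rho\preim=\Free[r]$, \ie $M=\ZZ^r$; the latter is a perfectly legitimate instance of the theorem, where the Cayley multidigraph is a bouquet of $r$ (possibly trivial) loops at one vertex. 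The repair is exactly the caveat the paper attaches to \eqref{eq: Sch}: when $M\rho\preim\neq\Trivial$, the abelian quotient $\ZZ^r/M$ is not freely generated by the images of the $w_i$'s, so its Cayley multidigraph carries a nontrivial reduced closed walk; by vertex-transitivity every vertex lies on one, hence there are no hanging trees and Schreier equals Stallings. With this fixed, the remaining steps (the effect of $\overline{\rho}$ on the generators, $\gen{\matr{M}}\matr{Q}=\gen{\matr{S}}$, and the convention $\diagi_i=0$ for $i>s$) are correct and match the paper.
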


\begin{rem}
Note that the generators $\vect{e_i} \matr{Q}$ in~\eqref{eq: Stallings = Cayley} must be interpreted as elements in an
ordered multiset (in order to keep track of the link between generators in the corresponding automata).
\end{rem}

\begin{rem}
Most of (the non-algorithmic part of) the analysis started in \Cref{fig: intersection diagram big} is still valid for arbitrary (maybe non finitely generated) subgroups $\HH_1,\HH_2 \leqslant \FTA$. Then $p_1,p_2$ and $r$ may be infinite, but \Cref{eq: int prF = M preab,eq: chain of isomorphisms of quotient groups} are still valid (with the natural definition of $\matr{D}$ as an $\infty \times m$ integer matrix), and we can rephrase \Cref{thm: Stallings = Cayley} saying that $\stallings{(\HH_1 \cap \HH_2)\prF,\set{w_1,\dots,w_r}}$ is isomorphic to the corresponding Cayley multidigraph of a countably generated abelian group.
\end{rem}

\begin{proof}[Proof of \Cref{thm: Stallings = Cayley}]
Assume $(\HH_1 \cap \HH_2) \prF \neq \Trivial$; in particular, $M\rho\preim\neq \Trivial$, $\HH_1\prF \cap \HH_2\prF \neq \Trivial$,  and $r\neq 0$; put $I=\set{1,\ldots ,r}$. The claimed result follows from the following chain of equalities and automata isomorphisms:
 \begin{align} 
 \Stallings{(\HH_1 \cap \HH_2) \prF \,,\, \set{w_i }_{i\in I}}
    & \label{eq: St(cap) = St(Mpreab)}
    \,\isom \,\stallings{M\rho\preim, \set{w_i \phi}_{i\in I}} \\[3pt]
    & \label{eq: Sch}
    \,=\,  \schreier{{M\rho\preim, \set{w_i \phi}_{i\in I}}} \\[3pt]
    & \label{eq: Cay Fr}
    \,=\,  \cayley{\Free[ \set{w_i}_{i\in I}] / M\rho\preim, \mset{w_i \phi \cdot (M\rho \preim)}_{i\in I}} \\[3pt]
    & \label{eq: Cay Z/MM}
    \,\isom\,  \cayley{\ZZ^r / \gen{\matr{M}}, \mset{\vect{e}_i + \gen{\matr{M}}}_{i\in I}} \\[3pt]
    & \label{eq: Cay Z/DDQQ}
    \,=\,  \cayley{\ZZ^r / \gen{\matr{S} \, \matr{Q}^{-1}}, \mset{\vect{e}_i + \gen{\matr{M}}}_{i\in I}} \\[3pt]
    & \label{eq: Cay Z/DD}
    \,\isom\,  \cayley{\ZZ^r / \gen{\matr{S}} \,,\, \mset{\vect{e}_i \matr{Q} + \gen{\matr{S}}}_{i\in I}} \\[3pt]
    & \label{eq: Cay Z/delta}
    \,=\,  \cayley{\textstyle{\bigoplus_{i=1}^{r} \ZZ/\diagi_i \ZZ} \,, \mset{\vect{e}_i \matr{Q}}_{i\in I}} \,.
  \end{align}
The isomorphism \eqref{eq: St(cap) = St(Mpreab)} follows immediately from \eqref{eq: int prF = M preab}. The equalities \eqref{eq: Sch} and \eqref{eq: Cay Fr} are consequences of the normality of $M\rho \preim$ in $\Free[r]$ (note that \eqref{eq: Sch} also needs the assumed condition $M\rho^{-1} \neq \Trivial$). Observe that different $w_i$'s may result in the same coset modulo $M\rho\preim$; this is why $\mset{w_i \phi \cdot (M\rho\preim)}_{i\in I}$, and the subsequent ones in \Cref{eq: Cay Z/MM,eq: Cay Z/MM,eq: Cay Z/DDQQ,eq: Cay Z/DD,eq: Cay Z/delta} must be understood as multisets. The isomorphism \eqref{eq: Cay Z/MM}
(where $\vect{e_i} = w_i \phi \rho $)
is clear from the (group) isomorphism $\overline{\rho}$ in \eqref{eq: chain of isomorphisms of quotient groups}.

Now compute a basis for $M=(L_1 +L_2)\matr{D}\preim$ from the starting data, and write it in the rows of an $s\times r$ integral matrix $\matr{M}$, where $0\leq s=\rk(M)\leqslant r$. Then, compute its Smith normal form $\matr{S}=\diag(\diagi_1,\ldots,\diagi_s)$ together with the invertible matrices $\matr{P}$ and $\matr{Q}$ such that $\matr{P}\matr{M}\matr{Q}=\matr{S}$. Since $\matr{P}$ is invertible, it is clear that $M=\gen{\matr{M}}=\gen{\matr{P}^{-1} \matr{S} \matr{Q}^{-1}}=\gen{\matr{S} \matr{Q}^{-1}}$ and 
\eqref{eq: Cay Z/DDQQ} follows. Finally, applying the automorphism $\matr{Q}\colon \ZZ^r \to \ZZ^r$ to both the group elements and the arc labels, we obtain the isomorphism \eqref{eq: Cay Z/DD} which with the convention that $\diagi_i = 0$ for $i=s+1,\ldots ,r$ takes the form~\eqref{eq: Cay Z/delta}.
\end{proof}

Of course, the situation is special in the degenerate case $(\HH_1 \cap \HH_2) \prF =\Trivial$. The following lemma clarifies the distinction between the two cases.

\begin{lem} \label{lem: int pr = 1}
Let $\HH_1,\HH_2$ be finitely generated subgroups of $\Fta$, and let $r$ denote the (finite) rank of $\HH_1 \prF \cap \HH_2 \prF$.
\begin{enumerate}[ind]
\item \label{item: r=0} If $r=0$, then $(\HH_1 \cap \HH_2) \prF =\Trivial$.
\item \label{item: r=1} If $r=1$, then $(\HH_1 \cap \HH_2) \prF = \Trivial$ if $\diagi_1 = 0$, and $\rk ((\HH_1 \cap \HH_2) \prF) =1$ otherwise.
\item \label{item: r>1} If $r\geq 2$, then $[\HH_1 \prF \cap \HH_2 \prF, \allowbreak
\HH_1 \prF \cap \HH_2 \prF] \leqslant (\HH_1 \cap \HH_2)\prF \neq \Trivial$, and
\begin{equation} \label{eq: rk int prF}
\rk \left((\HH_1 \cap \HH_2) \prF \right) -1
\,=\,
\frac{\diagi_1  \cdots \diagi_s}{\diagi_{s+1}  \cdots \diagi_r}  \cdot
\left(r-1\right) .
\end{equation}
\end{enumerate}
In particular,
$(\HH_1 \cap \HH_2)\prF = \Trivial$
if and only if
either $r=0$, or both $r=1$ and $M = \set{\vect{0}}$.
\end{lem}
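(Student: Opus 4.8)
The plan is to deduce everything from the isomorphism chain \eqref{eq: chain of isomorphisms of fg quotient groups}, namely $\HH_1\prF \cap \HH_2\prF / (\HH_1 \cap \HH_2)\prF \isom \bigoplus_{i=1}^{r} \ZZ/\diagi_i\ZZ$, together with the Nielsen--Schreier index/rank formula. First I would dispatch item~\ref{item: r=0}: if $r=0$ then $\HH_1\prF \cap \HH_2\prF = \Trivial$, so $(\HH_1 \cap \HH_2)\prF \leqslant \HH_1\prF \cap \HH_2\prF = \Trivial$ is trivial as well (this also matches \Cref{rem: case r=0}, where $\matr{D}$ is trivial and $M\rho^{-1}=\Free[0]=\Trivial$). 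For item~\ref{item: r=1}, $\HH_1\prF \cap \HH_2\prF \isom \Free[1] \isom \ZZ$, and $M = \gen{\diagi_1}\leqslant \ZZ$ (here $s\le 1$, and $\diagi_1$ is the single Smith invariant, which is $0$ exactly when $M=\{\vect 0\}$, i.e.\ $s=0$). Then $(\HH_1 \cap \HH_2)\prF = M\rho^{-1}$ up to the isomorphism $\phi$; if $\diagi_1 = 0$ this is $\{\vect 0\}\rho^{-1} = \ker\rho = [\Free[1],\Free[1]] = \Trivial$ (a free group of rank one is abelian), whereas if $\diagi_1 \neq 0$ then $M\neq\{\vect0\}$ has finite index $|\diagi_1|$ in $\ZZ$, so $(\HH_1 \cap \HH_2)\prF$ is a finite-index (hence nontrivial, and by Nielsen--Schreier again free) subgroup of $\Free[1]$, which forces $\rk((\HH_1\cap\HH_2)\prF)=1$.

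The substantive case is item~\ref{item: r>1}. Since $\rho\colon \Free[r]\onto\ZZ^r$ has kernel $[\Free[r],\Free[r]]$, and $(\HH_1\cap\HH_2)\prF \isom M\rho^{-1} \supseteq \{\vect0\}\rho^{-1} = [\Free[r],\Free[r]]$, the containment $[\HH_1\prF\cap\HH_2\prF,\HH_1\prF\cap\HH_2\prF]\leqslant(\HH_1\cap\HH_2)\prF$ is immediate after transporting along $\phi$; and since $r\geq 2$ the free group $\Free[r]$ is nonabelian so this commutator subgroup is nontrivial, giving $(\HH_1\cap\HH_2)\prF\neq\Trivial$. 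For the rank formula I would split on whether the index $j\coloneqq\Ind{\HH_1\prF\cap\HH_2\prF}{(\HH_1\cap\HH_2)\prF}$ is finite. By \eqref{eq: index = product deltas}, $j=\diagi_1\cdots\diagi_s$ if $s=r$ and $j=\infty$ if $s<r$. When $j<\infty$ (so $s=r$), apply the Nielsen--Schreier formula $\rk(N)-1 = j\,(\rk(\Free[r])-1) = j\,(r-1)$ to the finite-index subgroup $N=(\HH_1\cap\HH_2)\prF$ of $\Free[r]$; since $s=r$ the quantity $\diagi_1\cdots\diagi_s/(\diagi_{s+1}\cdots\diagi_r)$ in \eqref{eq: rk int prF} is just the empty-denominator product $\diagi_1\cdots\diagi_r = j$, so the two sides agree. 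When $s<r$, I claim $(\HH_1\cap\HH_2)\prF$ has infinite rank: indeed it contains $[\Free[r],\Free[r]]$ as a subgroup of infinite index (the quotient $\ZZ^r/M$ is infinite as $s<r$ forces a free $\ZZ^{r-s}$ summand), and an infinite-index subgroup of $\Free[r]$ ($r\ge 2$) containing the commutator subgroup is infinitely generated --- e.g.\ because $\Free[r]/[\Free[r],\Free[r]]\isom\ZZ^r$ has the infinite-index infinite subgroup $M$, whose preimage is $(\HH_1\cap\HH_2)\prF$, and a proper finitely generated subgroup would give finite rank contradicting the Nielsen--Schreier count for infinite-index subgroups of nonabelian free groups (these always have infinite rank). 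On the arithmetic side, when $s<r$ the right-hand side of \eqref{eq: rk int prF} has a zero in the denominator $\diagi_{s+1}\cdots\diagi_r$ (recall $\diagi_i=0$ for $i>s$), so the fraction is ``$\infty$'' and both sides are again $\infty$; I should state the convention that makes this reading legitimate.

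Finally, the ``in particular'' clause is a bookkeeping summary: collecting the three items, $(\HH_1\cap\HH_2)\prF=\Trivial$ happens exactly when $r=0$ (item~\ref{item: r=0}), or $r=1$ with $\diagi_1=0$, i.e.\ $M=\{\vect0\}$ (item~\ref{item: r=1}); and item~\ref{item: r>1} shows it is never trivial for $r\ge2$. The main obstacle I anticipate is the infinite-rank case $s<r$ within item~\ref{item: r>1}: the cleanest argument is probably to invoke directly that an infinite-index subgroup of a finitely generated nonabelian free group is never finitely generated (standard, and also transparent via the Stallings/covering-space picture used throughout the paper), rather than manipulating Schreier transversals by hand; I would also make sure the degenerate-denominator convention in \eqref{eq: rk int prF} is spelled out so the statement is literally correct when some $\diagi_i=0$.
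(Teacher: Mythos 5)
Your overall strategy coincides with the paper's: items (i) and (ii) read off from $M\leqslant\ZZ$, and item (iii) via $(\HH_1\cap\HH_2)\prF\isom M\rho^{-1}\geqslant\ker\rho=[\Free[r],\Free[r]]$, then the Schreier index formula when the index \eqref{eq: index = product deltas} is finite ($s=r$) and an infinite-rank argument when it is infinite ($s<r$), with the zero-denominator reading of \eqref{eq: rk int prF} in the latter case. Up to that point everything is fine and essentially identical to the paper's proof.

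The genuine gap is in the justification of the infinite-rank case, which you also repeat in your closing paragraph as the ``cleanest argument'': it is \emph{false} that an infinite-index subgroup of a finitely generated nonabelian free group is never finitely generated, and false that such subgroups ``always have infinite rank'' --- $\gen{x}\leqslant\Free[2]$ has infinite index and rank $1$, and in the Stallings/covering picture finitely generated infinite-index subgroups are exactly those with a finite core sitting inside an infinite Schreier graph. The correct fact, and the one the paper invokes, needs normality: a \emph{nontrivial normal} subgroup of infinite index in a free group has infinite rank (equivalently, a nontrivial finitely generated normal subgroup of a free group has finite index). You actually have normality in hand, since you observed that $M\rho^{-1}$ contains $[\Free[r],\Free[r]]$ (this normality is also recorded in \Cref{prop: int prF = M preab}); so the repair is simply to replace your ``e.g.'' justification by an appeal to that statement about normal subgroups, exactly as the paper does. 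With that substitution the remaining steps --- the computation $\rk-1=\diagi_1\cdots\diagi_r\,(r-1)$ when $s=r$, the convention making \eqref{eq: rk int prF} read $\infty=\infty$ when $s<r$, and the ``in particular'' bookkeeping --- agree with the paper's argument.
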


\begin{proof}
\ref{item: r=0}~The case $r=0$ is trivial.
\ref{item: r=1}~If $r=1$, then the (cyclic) subgroup $(\HH_1 \cap \HH_2) \prF$ is trivial if and only if $M=\set{\vect{0}}$ or, equivalently, $\diagi_1=0$.
\ref{item: r>1}~Firstly note that $(\HH_1 \cap \HH_2)\prF \isom (L_1 + L_2) \matr{D}\preim \rho \preim  = M \rho \preim \geqslant \Der{\Free[r]}$, which is non-trivial when $r\geq 2$. Then, \eqref{eq: rk int prF} follows easily from~\Cref{eq: index = product deltas}: if the index $\ind{\HH_1\prF \cap \HH_2 \prF}{(\HH_1 \cap \HH_2) \prF}$ is finite, then \eqref{eq: rk int prF} corresponds precisely to the well-known Schreier index formula. Otherwise, $(\HH_1 \cap \HH_2) \prF$ is a non-trivial normal subgroup of infinite index in $\HH_1\prF \cap \HH_2 \prF $ and hence has infinite rank; and, on the other hand, $s<r$ and the right hand side of~\eqref{eq: rk int prF} is infinite as well. The last claim is obvious from the above discussion.
\end{proof}

A neat characterization of when the intersection of two finitely generated subgroups of $\FTA$ is again finitely generated follows easily from \Cref{thm: Stallings = Cayley} and the previous considerations. Note that, since the parameters $r$ and $s$ in~\Cref{prop: fg int characterization FATF}\ref{item: r=0 or r=s} are clearly computable, this immediately solves the decision part of $\SIP(\FTA)$.

\begin{prop} \label{prop: fg int characterization FATF}
Let $\HH_1,\HH_2$ be finitely generated subgroups of $\Fta$. Then, the following conditions are equivalent:
\begin{enumerate}[dep]
\item \label{item: int fg} the intersection $\HH_1 \cap \HH_2$ is finitely generated;
\item \label{item: int pi fg} the projection $(\HH_1 \cap \HH_2)\prF$ is finitely generated;
\item \label{item: r=0 or r=s}  either $r=0$, $r=1$, or $2 \leq r = s$; 
\item \label{item: fi or trivial} the (normal) subgroup $(\HH_1 \cap \HH_2)\prF$ is either trivial, or has finite index in $\HH_1\prF \cap \HH_2 \prF$.
\end{enumerate}
\end{prop}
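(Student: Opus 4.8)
The plan is to run the cycle of implications by first dispatching $\ref{item: int fg}\Leftrightarrow\ref{item: int pi fg}$, and then checking that each of the two remaining conditions is equivalent to the purely numerical statement $\ref{item: r=0 or r=s}$ (``$r\leqslant 1$, or $s=r$''). Essentially all the structural content needed has already been packaged into \Cref{thm: Stallings = Cayley} and \Cref{lem: int pr = 1}; the argument is a matter of reading it off while keeping the degenerate small-rank cases straight. The equivalence $\ref{item: int fg}\Leftrightarrow\ref{item: int pi fg}$ is just \Cref{cor: H fg iff Hpi fg} applied to the subgroup $\HH=\HH_1\cap\HH_2\leqslant\FTA$.

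For $\ref{item: int pi fg}\Leftrightarrow\ref{item: r=0 or r=s}$ I would argue as follows. By \Cref{thm: Stallings bijection}, applied inside the free group $\Free[r]$ on $\basis$, the subgroup $(\HH_1\cap\HH_2)\prF$ is finitely generated if and only if $\stallings{(\HH_1\cap\HH_2)\prF,\basis}$ is a \emph{finite} automaton. By \Cref{thm: Stallings = Cayley}, this automaton is either a single vertex (precisely when $(\HH_1\cap\HH_2)\prF=\Trivial$) or is isomorphic to the Cayley multidigraph $\Cayley{\bigoplus_{i=1}^{r}\ZZ/\diagi_i\ZZ,\mset{\vect{e}_i\matr{Q}}_i}$, which is finite exactly when the group $\bigoplus_{i=1}^{r}\ZZ/\diagi_i\ZZ$ is finite, i.e.\ when no $\diagi_i$ vanishes; under the standing convention $\diagi_i=0$ for $s<i\leqslant r$, this is the condition $s=r$. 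So $(\HH_1\cap\HH_2)\prF$ is finitely generated iff it is trivial or $s=r$, and it remains to match this against \Cref{lem: int pr = 1}: when $r=0$ or $r=1$ the subgroup sits inside $\Free[0]$ or $\Free[1]$ and so is automatically finitely generated (and $r=1$ forces $s\in\{0,r\}$), while for $r\geqslant 2$ it is nontrivial by \Cref{lem: int pr = 1}\ref{item: r>1}, hence finitely generated iff $s=r$ --- which is exactly $\ref{item: r=0 or r=s}$.

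For $\ref{item: r=0 or r=s}\Leftrightarrow\ref{item: fi or trivial}$ I would again split along \Cref{lem: int pr = 1}. If $r=0$, or $r=1$ with $\diagi_1=0$, then $(\HH_1\cap\HH_2)\prF=\Trivial$, so both $\ref{item: fi or trivial}$ and $\ref{item: r=0 or r=s}$ hold. If $s=r$ --- which when $r=1$ is the complementary case $\diagi_1\neq 0$, and when $r\geqslant 2$ forces $(\HH_1\cap\HH_2)\prF\neq\Trivial$ --- then \eqref{eq: index = product deltas} gives $\Ind{\HH_1\prF\cap\HH_2\prF}{(\HH_1\cap\HH_2)\prF}=\diagi_1\cdots\diagi_s<\infty$, so $\ref{item: fi or trivial}$ holds. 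Conversely, if $r\geqslant 2$ and $s<r$ then $(\HH_1\cap\HH_2)\prF$ is nontrivial by \Cref{lem: int pr = 1}\ref{item: r>1} while \eqref{eq: index = product deltas} shows it has infinite index in $\HH_1\prF\cap\HH_2\prF$, so $\ref{item: fi or trivial}$ fails --- and so does $\ref{item: r=0 or r=s}$. This closes the loop $\ref{item: int fg}\Rightarrow\ref{item: int pi fg}\Rightarrow\ref{item: r=0 or r=s}\Rightarrow\ref{item: fi or trivial}\Rightarrow\ref{item: int fg}$ (the last step going back through $\ref{item: int pi fg}$ via \Cref{cor: H fg iff Hpi fg}).

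I do not expect a genuine obstacle: the heavy lifting is done by \Cref{thm: Stallings = Cayley} and \Cref{lem: int pr = 1}. The only thing demanding care is the bookkeeping of the degenerate ranks $r\in\{0,1\}$ --- verifying that each of them lands on the correct side of every biconditional --- together with using the convention $\diagi_i=0$ for $s<i\leqslant r$ consistently when translating ``$\bigoplus_i\ZZ/\diagi_i\ZZ$ finite'' into ``$s=r$''.
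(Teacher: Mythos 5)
Your proposal is correct and follows essentially the same route as the paper: \Cref{cor: H fg iff Hpi fg} for the equivalence of \ref{item: int fg} and \ref{item: int pi fg}, and then \Cref{thm: Stallings = Cayley}, \Cref{lem: int pr = 1} and \cref{eq: index = product deltas} to identify both \ref{item: r=0 or r=s} and \ref{item: fi or trivial} with finite generation of $(\HH_1\cap\HH_2)\prF$. Your treatment is only slightly more explicit than the paper's in spelling out the degenerate cases $r\in\{0,1\}$ and in invoking \Cref{thm: Stallings bijection} to equate finite generation with finiteness of the automaton, but the argument is the same.
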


\begin{proof}[Proof]
[\ref{item: int fg}$\Biimp$\ref{item: int pi fg}] This is a particular instance of~\Cref{cor: H fg iff Hpi fg}.

[\ref{item: int pi fg}$\Biimp$\ref{item: r=0 or r=s}]
If $r=0$ or $1$, then every subgroup of $\HH_1 \prF \cap \HH_2 \prF$ is cyclic and hence finitely generated.
Otherwise
(since $(\HH_1  \cap \HH_2) \prF \neq \Trivial$),
\Cref{eq: Stallings = Cayley} holds and thus $(\HH_1 \cap \HH_2) \prF$ is finitely generated if and only if the group $\bigoplus_{i=1}^{r} \ZZ/ \diagi_i \ZZ$ is finite, which happens if and only if $s=\rk(M)=r$.

[\ref{item: r=0 or r=s}$\Biimp$\ref{item: fi or trivial}] From \Cref{thm: Stallings = Cayley} and \Cref{eq: chain of isomorphisms of quotient groups} (see \Cref{eq: Cay Fr} in the proof), if $(\HH_1 \cap \HH_2) \prF \neq \Trivial$ then $(\HH_1 \cap \HH_2) \prF$ is finitely generated if and only if the index $\ind{\HH_1 \prF \cap \HH_2 \prF}{(\HH_1 \cap \HH_2) \prF}$ is finite.
\end{proof}

\begin{rem} \label{rem: L1=L2=0}
Suppose $\HH_1, \HH_2\leq \FTA$ both have trivial abelian part, namely $L_1=\HH_1\cap \AA=\set{\vect{0}}$ and $L_2=\HH_2\cap \AA=\set{\vect{0}}$. In this case, $M=(L_1 +L_2)\matr{D}\preim =\set{\vect{0}}\,\matr{D}\preim =\ker \matr{D}$ is a direct summand of $\ZZ^r$. So, either $M=\ZZ^r$ (and so, $(\HH_1\cap \HH_2)\prF =\HH_1\prF \cap \HH_2\prF$) or $M$ is of infinite index in $\ZZ^r$. Hence, in this case, $(\HH_1\cap \HH_2)\prF$ is finitely generated if and only if it equals $\HH_1\prF \cap \HH_2\prF$.
\end{rem}

Finally, we can combine the developed machinery to compute an enriched Stallings automaton for $\HH_1 \cap \HH_2$.

\begin{thm} \label{thm: enriched Stallings computable}
With the above notation, and after detecting that $\HH_1 \cap \HH_2$ is finitely generated, the following procedure outputs a Stallings automaton for $\HH_1 \cap \HH_2$:
\begin{enumerate}[seq]
\item \label{item: draw Stallings cap pi}
Compute the Stallings automaton $\Atii$ of $(\HH_1 \cap \HH_2)\prF$ \wrt a  free basis $\set{w_1, \ldots, w_r}$ for $\HH_1 \prF \cap \HH_2 \prF$.
\item \label{item: replace}
Replace each $w_i$-arc in $\Atii$ by a directed $X$-path spelling $w_i=w_i(X) \neq \trivial$, doubly-enriched with a pair of vectors $(\vect{a_i},\vect{b_i}) \in \AA \times \AA$
(attached, say, to the end of the last arc)
such that $\fta{w_i}{a_i}\in \HH_1$ and $\fta{w_i}{b_i}\in \HH_2$;  and attach the pair of subgroups $(L_1,L_2)$ to the basepoint.
\item \label{item: foldings} Reduce the resulting automaton until a reduced doubly-enriched automaton is obtained.
\item \label{item: equalize}
Equalize the automaton obtained \wrt a chosen spanning tree.
\end{enumerate}
\end{thm}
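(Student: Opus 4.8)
The plan is to verify, in turn, that each of the four steps is well defined, effective, and (in the finitely generated case) terminates, and then that the enriched automaton the procedure returns is exactly $\St_{\Treei}(\HH_1 \cap \HH_2, X)$ modulo its basepoint subgroup, for $\Treei$ the spanning tree picked in the last step; by the uniqueness part of \Cref{thm: enriched Stallings bijection} this is all that needs checking. The key is to maintain, all along the run of the algorithm, two invariants of the current (doubly-enriched, and ultimately enriched) automaton $\Eti$: \emph{(i)} the skeleton $\sk \Eti$ recognizes the subgroup $(\HH_1 \cap \HH_2)\prF \leqslant \Free[X]$; and \emph{(ii)} the pair of basepoint subgroups of $\Eti$ is $(\HH_1 \cap \AA,\ \HH_2 \cap \AA)$, and whenever $\gamma$ is a $\bp$-walk of $\Eti$ with free label $w$ and $\vect{a}$ is its $i$-th abelian label, then $\vect{a} + (\HH_i \cap \AA) = \cab{w}{\HH_i}$ for $i = 1,2$ (once the automaton is enriched rather than doubly-enriched, read ``$i$-th abelian label'' as ``abelian label'' and ``$(\HH_1 \cap \AA, \HH_2 \cap \AA)$'' as ``$(\HH_1 \cap \HH_2) \cap \AA$''). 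Note that \emph{(i)} together with \emph{(ii)} forces $w \in \HH_1 \prF \cap \HH_2 \prF$, and that, by \Cref{lem: completion is a coset}, \emph{(ii)} is precisely the amount of abelian information about $\HH_1$ and $\HH_2$ that the intersection sees.

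For the first step, the ranks $r = \rk(\HH_1 \prF \cap \HH_2 \prF)$ and $s = \rk(M)$, the change-of-basis isomorphisms, the matrices $\matr{A_i}$, $\matr{B_i}$ (hence $\matr{C_i} = \matr{B_i}\matr{A_i}$, the difference matrix $\matr{D}$, and $M = (L_1 + L_2)\matr{D}^{-1}$), and the Smith decomposition $\matr{S} = \matr{P}\matr{M}\matr{Q}$ are all computable from finite bases of $\HH_1, \HH_2$ via \Cref{thm: Stallings bijection}, \Cref{cor: pb} and linear algebra; if $(\HH_1 \cap \HH_2)\prF$ is trivial --- which \Cref{lem: int pr = 1} detects, namely $r = 0$, or $r = 1$ and $M = \set{\vect{0}}$ --- then $\Atii$ is the one-vertex automaton, and otherwise \Cref{thm: Stallings = Cayley} presents $\Atii = \Stallings{(\HH_1 \cap \HH_2)\prF, \basis}$ as the Cayley multidigraph $\Cayley{\bigoplus_i \ZZ/\diagi_i\ZZ,\ \mset{\vect{e_i}\matr{Q}}_i}$, which is \emph{finite} precisely because we are in the finitely generated case (\Cref{prop: fg int characterization FATF}) and which becomes the required $\basis$-automaton after relabelling the generator $\vect{e_i}\matr{Q}$ as $w_i$. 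For the second step, each $w_i$ lies in $\HH_1 \prF \cap \HH_2 \prF$, so by \Cref{lem: completion is a coset} the cosets $\cab{w_i}{\HH_1}$ and $\cab{w_i}{\HH_2}$ are nonempty with computable representatives: take $\vect{a_i} = \vect{e_i}\matr{C_1}$ and $\vect{b_i} = \vect{e_i}\matr{C_2}$, the $i$-th rows of $\matr{C_1}$ and $\matr{C_2}$ (see \Cref{fig: intersection diagram big}). Since $\basis$ is a free-basis of $\HH_1 \prF \cap \HH_2 \prF \leqslant \Free[X]$, the embedding $\Free[\basis] \hookrightarrow \Free[X]$ it induces sends the subgroup recognized by $\Atii$ onto $(\HH_1 \cap \HH_2)\prF$, so replacing each $w_i$-arc by a directed $X$-path spelling $w_i$ (with $(\vect{a_i}, \vect{b_i})$ on its last arc) yields a doubly-enriched automaton $\Eti'$ whose skeleton recognizes $(\HH_1 \cap \HH_2)\prF$ (invariant \emph{(i)}); and a $\bp$-walk of $\Eti'$ with free label $w$ projects to a $\bp$-walk of $\Atii$ spelling a word $\omega$ in $\basis^{\pm}$ whose abelianization equals $w\phi\rho$ (abelianizations ignore backtracking), so the $i$-th abelian label of the walk is $w\phi\rho\,\matr{C_i}$, and $w\phi\rho\,\matr{C_i} + (\HH_i \cap \AA) = \cab{w}{\HH_i}$ by \Cref{lem: completion is a coset} and the commutativities of \Cref{fig: intersection diagram big} (invariant \emph{(ii)}).

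The third step uses the coordinatewise doubly-enriched analogues of the enriched foldings: in the \emph{open} case a coordinatewise arc transformation followed by a coordinatewise vertex transformation at $\term \edgii \neq \term \edgi$ makes all labels of the two arcs coincide, and we fold; in the \emph{closed} case we delete the redundant parallel arc $\edgii$ and, exactly as in \Cref{fig: closed enriched folding}, enlarge the pair of basepoint subgroups by the coordinatewise purely-abelian holonomy $(\delta_1, \delta_2)$ of the loop $\edgi\edgii^{-1}$. Both plainly preserve $\sk$-recognition, so invariant \emph{(i)} persists, and the process terminates --- the finite skeleton loses an arc or a vertex at each folding and never sprouts a hanging tree away from $\bp$ --- ending at a reduced doubly-enriched automaton $\Eti''$, which by invariant \emph{(i)} and the uniqueness in \Cref{thm: Stallings bijection} satisfies $\sk \Eti'' = \Stallings{(\HH_1 \cap \HH_2)\prF, X}$. \textbf{The delicate point, and the one I expect to be the main obstacle, is that invariant \emph{(ii)} survives a closed folding.} Arc and vertex transformations leave every $\bp$-walk label unchanged, and an open folding only reroutes $\bp$-walks through arcs carrying identical labels; for a closed folding, conjugating the loop $\edgi\edgii^{-1}$ by a tree path from $\bp$ produces a $\bp$-walk $\beta$ with trivial free label and abelian label $(\delta_1, \delta_2)$ (here one uses that $\AA$ is central in $\FTA$), so invariant \emph{(ii)} applied to $\beta$, together with $\cab{\trivial}{\HH_i} = \HH_i \cap \AA$, forces $\delta_i \in \HH_i \cap \AA$ --- the enlargement of the basepoint subgroups is therefore \emph{vacuous}, and rerouting any $\bp$-walk through $\edgi$ rather than $\edgii$ alters its $i$-th abelian label only by an element of $\HH_i \cap \AA$, so \emph{(ii)} is preserved.

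Finally, the last step. Since $\sk \Eti''$ recognizes $(\HH_1 \cap \HH_2)\prF$, the free label $w$ of any $\bp$-walk of $\Eti''$ lies in $(\HH_1 \cap \HH_2)\prF$ and hence has compatible completions, $\cab{w}{\HH_1} \cap \cab{w}{\HH_2} \neq \varnothing$; by invariant \emph{(ii)} this is exactly the equalizability condition of \Cref{def: equalization}, so the $\Treei$-equalization $\Eti'''$ of $\Eti''$ is defined and computable. It has $\sk \Eti''' = \Stallings{(\HH_1 \cap \HH_2)\prF, X}$, basepoint subgroup $(\HH_1 \cap \AA) \cap (\HH_2 \cap \AA) = (\HH_1 \cap \HH_2) \cap \AA$, and is $\Treei$-normalized by construction; and, since equalization replaces the double label $(\vect{a}, \vect{b})$ of each non-tree arc by a witness of $(\vect{a} + \HH_1 \cap \AA) \cap (\vect{b} + \HH_2 \cap \AA)$, invariant \emph{(ii)} carries over to $\Eti'''$, so the abelian label $\vect{c}$ of any $\bp$-walk of $\Eti'''$ with free label $w$ satisfies $\vect{c} + (\HH_i \cap \AA) = \cab{w}{\HH_i}$ for $i = 1,2$. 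Using the elementary identity $(\vect{c} + A_1) \cap (\vect{c} + A_2) = \vect{c} + (A_1 \cap A_2)$, the set of elements of $\FTA$ recognized by $\Eti'''$ is $\set{\fta{w}{c'} \st w \in (\HH_1 \cap \HH_2)\prF,\ \vect{c'} \in \cab{w}{\HH_1} \cap \cab{w}{\HH_2}}$, which by \Cref{lem: completion is a coset} is precisely $\HH_1 \cap \HH_2$. Hence $\Eti'''$ is a reduced, $\Treei$-normalized enriched automaton recognizing $\HH_1 \cap \HH_2$, so by \Cref{thm: enriched Stallings bijection} it is $\St_{\Treei}(\HH_1 \cap \HH_2, X)$ modulo its basepoint subgroup --- a Stallings automaton for $\HH_1 \cap \HH_2$ --- and every step above was carried out effectively. (In the degenerate case $(\HH_1 \cap \HH_2)\prF = \Trivial$ this is immediate: $\Atii$ is a single vertex, the second and third steps are trivial, and equalization turns $(\HH_1 \cap \AA, \HH_2 \cap \AA)$ into $(\HH_1 \cap \HH_2) \cap \AA = \HH_1 \cap \HH_2$.)
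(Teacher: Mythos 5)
Your proposal is correct and follows essentially the same route as the paper: construct $\Atii$ via \Cref{thm: Stallings = Cayley}, replace its arcs by doubly-enriched $X$-paths carrying computable completions, fold, equalize, and establish $\gen{\Ati}=\HH_1\cap\HH_2$ by the same double inclusion through abelian completions. The only real divergence is local: where the paper notes that no closed foldings can occur (the Betti number of the skeleton is preserved, so the basepoint pair stays $(L_1,L_2)$ untouched), you allow closed foldings and use your invariant to show the added holonomy $(\delta_1,\delta_2)$ already lies in $L_1\times L_2$, an equivalent and slightly more robust safeguard; likewise your explicit choice $\vect{a_i}=\vect{e_i}\matr{C_1}$, $\vect{b_i}=\vect{e_i}\matr{C_2}$ differs harmlessly from the paper's use of the labels read off the normalized product $\Eti_{\!1}\times\Eti_{\!2}$.
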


\begin{proof}
We start by computing the Stallings automata $\Eti_{\!1} = \stallings{\HH_1,X}$ and $\Eti_{\!2} = \stallings{\HH_2,X}$ (see~\Cref{thm: enriched Stallings bijection}). In particular, we can use linear algebra to obtain abelian-bases for the subgroups $L_1=\HH_1\cap \AA$, $L_2=\HH_2\cap \AA$, and hence an abelian basis 
for the subgroup $L_1 \cap L_2 = \HH_1 \cap \HH_2 \cap \AA$. Also, we choose spanning trees and compute the corresponding bases for $\HH_1$ and $\HH_2$.

Then, compute the doubly-enriched automaton $\Eti_{\!1} \times \Eti_{\!2}$, $T$-normalized \wrt a chosen spanning tree $T$; compute the corresponding free basis $\basis_T=\set{w_1, \ldots ,w_r}$ for $\HH_1\prF\cap \HH_2\prF$, and let $\Atiii = \stallings{\HH_1\prF \cap \HH_2\prF,X}=\core (\sk(\Eti_{\!1}) \times \sk(\Eti_{\!2}))$; see~\Cref{cor: pb}. Finally, we compute the integral matrices $\matr{A_1}$, $\matr{A_2}$, $\matr{B_1}$, $\matr{B_2}$ and $\matr{D}$ (see~\Cref{fig: intersection diagram big}), and an abelian basis for the subgroup $M=(L_1+L_2)\matr{D}\preim\leqslant \ZZ^r$, which we write in the rows of a new integral matrix $\matr{M}$ of size $s\times r$, where $s=\rk(M)\leqslant r=\rk(\HH_1\prF \cap \HH_2\prF)$. Now, let us distinguish two cases:

If the automaton $\stallings{\HH_1\prF \cap \HH_2\prF, X}$ is just a point
(so, $r=\rk(\HH_1\prF \cap \HH_2\prF)=0$), or it has rank $r=1$ but $M=\set{\vect{0}}$,
then, by~\Cref{lem: int pr = 1}, $(\HH_1 \cap \HH_2) \prF $ is trivial and hence finitely generated.
In this case, $\Stallings{\HH_1\cap \HH_2,X}$ is a single point $\bp$ with attached subgroup $L_1\cap L_2$.

Otherwise,
$1\leq s=r$ (since we are assuming $(\HH_1\cap \HH_2) \prF$ is finitely generated)
and we can apply \Cref{thm: Stallings = Cayley}: compute the Smith normal form for $\matr{M}$, say $\matr{S}=\diag(\diagi_1,\ldots,\diagi_r)$, where $\diagi_1,\ldots,\diagi_r\in \ZZ\setminus \{0\}$, $\diagi_1 |\cdots |\diagi_r$, together with invertible matrices $\matr{P},\, \matr{Q}\in GL_{r}(\ZZ)$ such that $\matr{P}\matr{M}\matr{Q}=\matr{S}$, and draw the Cayley multidigraph indicated in~\cref{eq: Stallings = Cayley}, corresponding to the finite abelian group $\bigoplus_{i=1}^{r} \ZZ/\diagi_i \ZZ$. After reinterpreting the labels accordingly, this is nothing else but the Stallings automaton $\Atii$ of $(\HH_1 \cap \HH_2)\prF$ as a subgroup of $\HH_1 \prF \cap \HH_2 \prF$ and \wrt the ambient free basis $\set{w_1, \ldots, w_r}$. This is the content of step \ref{item: draw Stallings cap pi}.

Note that each generator $w_i$ corresponds to an edge $\edgi_i$ in $\Ati_1 \times \Ati_2$ outside $T$ with a double label $\lab_2(\edgi_i)=(\vect{a_i},\vect{b_i})$ and closing a $(\bp_1, \bp_2)$-walk
\smash{$\Tpetal{\edgi_i}$}
with label $\fta{w_i}{(a_i,b_i)}$, such that $\fta{w_i}{a_i}\in \HH_1$ and $\fta{w_i}{b_i}\in \HH_2$. After replacing every $w_i$-arc in $\Atii$ with the doubly enriched $X$-path
\smash{$\Tpetal{\edgi_i}$},
successively folding the resulting automaton, and finally taking the core, we obtain a reduced doubly-enriched $X$-automaton $\Atii'$ such that its free part recognizes $\gen{\sk(\Atii')} = (\HH_1 \cap \HH_2)\prF$, and when read \wrt the first (\resp second) abelian components recognizes a subgroup of $\HH_1$ (\resp $\HH_2$). Note that no closed foldings are involved, since $\rk (\Atii) = \rk (\HH_1 \cap \HH_2)\prF =\rk (\Atii')$, so no vector gets added to the basepoint subgroups, which remain equal to $L_1$ and $L_2$. This is the content of steps \ref{item: replace} and \ref{item: foldings}.

According to \Cref{def: equalization}, step \ref{item: equalize} consists of three parts. Firstly, normalize $\Atii'$ \wrt some chosen spanning tree $\TT$ (that is, use abelian transformations to concentrate the double abelian mass of $\Atii'$ into the heads of the edges outside~$\TT$). Secondly, for every edge outside $\TT$, read the corresponding label $\fta{w}{(a,b)}$. By construction, $\fta{w}{a}\in \HH_1$ and $\fta{w}{b}\in\HH_2$, but also $w\in (\HH_1\cap \HH_2)\prF$ and so, the coset intersection $(\vect{a}+L_1)\cap (\vect{b}+L_2)$ is non-empty; this means that $\Atii$ is equalizable. Compute $\vect{c}\in (\vect{a}+L_1)\cap (\vect{b}+L_2)$ and replace in $\Atii'$ the double labeling $(\vect{a},\vect{b})$ with the genuine one $\vect{c}\in \AA$. Finally, replace $(L_1,L_2)$ by $L_1\cap L_2$ as basepoint subgroup, and call $\Ati$ the final obtained automaton. This is the equalization process mentioned in step \ref{item: equalize}.

By construction, $\Ati$ is an enriched,
reduced, and $\TT$-normalized automaton such that $\gen{\Ati} \leqslant \HH_1 \cap \HH_2$
and $\gen{\sk (\Ati)} = (\HH_1 \cap \HH_2) \prF$.
Moreover, given an element $\fta{u}{d} \in \HH_1 \cap \HH_2$, $u\in (\HH_1 \cap \HH_2)\prF$ and so it is the free label of a $\bp$-walk in $\Ati$. This walk reads an element $\fta{u}{e} \in \gen{\Eti} \leqslant \HH_1 \cap \HH_2$; hence, $\vect{d} -\vect{e} \in \HH_1 \cap \HH_2 \cap \AA = L_1 \cap L_2$, and so $\fta{u}{d} \in \gen{\Ati}$. Therefore,
$\gen{\Ati} = \HH_1 \cap \HH_2$ and $\Ati$ is a Stallings automaton for $\HH_1 \cap \HH_2$.
\end{proof}

Since finite Stallings automata provide computable bases for the subgroups they recognize, the above results immediately solve the $\SIP$ for free-times-abelian groups.

\begin{cor} \label{cor: SIP FTA is solvable}
The subgroup intersection problem $\SIP(\Fta)$ is solvable. \qed
\end{cor}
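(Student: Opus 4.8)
The strategy is to bolt together the machinery already developed, so that the only genuinely new ingredient is the observation that every quantity involved is effectively computable. Given two finite sets of words $R,S$ over the generators $X\cup T$ of $\FTA$, set $\HH_1=\gen{R}$, $\HH_2=\gen{S}$. First I would invoke \Cref{thm: enriched Stallings bijection} to compute the (finite) Stallings automata $\Eti_{\!1}=\stallings{\HH_1,X}$ and $\Eti_{\!2}=\stallings{\HH_2,X}$; along the way, linear algebra over $\ZZ$ yields abelian-bases of $L_1=\HH_1\cap\AA$ and $L_2=\HH_2\cap\AA$, hence of $L_1+L_2$ and $L_1\cap L_2$. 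Next, form the product $\Eti_{\!1}\times\Eti_{\!2}$ and take the core of its skeleton, which by \Cref{cor: pb,rem: strict inclusion} is $\stallings{\HH_1\prF\cap\HH_2\prF,X}$; reading its $\Treei$-petals for a chosen spanning tree gives a free-basis $\set{w_1,\ldots,w_r}$ of $\HH_1\prF\cap\HH_2\prF$, and in particular the integer $r$.

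For the decision part, I would then build the matrices $\matr{A_1},\matr{A_2},\matr{B_1},\matr{B_2}$ and the difference matrix $\matr{D}=\matr{B_1}\matr{A_1}-\matr{B_2}\matr{A_2}$ of \Cref{fig: intersection diagram big}, and use linear algebra to compute an abelian-basis of $M=(L_1+L_2)\matr{D}^{-1}\leqslant\ZZ^{r}$ together with its rank $s=\rk(M)$. By \Cref{prop: fg int characterization FATF}, the intersection $\HH_1\cap\HH_2$ is finitely generated if and only if $r=0$, $r=1$, or $2\leq r=s$; since $r$ and $s$ are now explicitly in hand, this decides whether $\gen{R}\cap\gen{S}$ is finitely generated. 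If the test fails, the algorithm outputs that the intersection is not finitely generated and halts.

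In the affirmative case, I would run the procedure of \Cref{thm: enriched Stallings computable}: using the Smith Normal Form of $\matr{M}$, draw the Cayley multidigraph of \Cref{eq: Stallings = Cayley} (or a single point, when \Cref{lem: int pr = 1} says $(\HH_1\cap\HH_2)\prF$ is trivial), reinterpret it as $\stallings{(\HH_1\cap\HH_2)\prF,\set{w_1,\ldots,w_r}}$, replace each $w_i$-arc by the doubly-enriched $X$-path it came from, fold, take the core, and equalize with respect to a spanning tree $\TT$. This produces a finite enriched Stallings automaton $\Ati$ with $\gen{\Ati}=\HH_1\cap\HH_2$ and $L_{\Ati}=L_1\cap L_2$. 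Finally, by \Cref{prop: enriched Stallings parts} (equivalently \Cref{cor: bases are computable}), reading the enriched labels of the $\TT$-petals of $\Ati$ gives an enriched $\TT$-basis of $\gen{\body{\Ati}}$ which, joined to the already computed abelian-basis of $L_1\cap L_2$, is a basis of $\HH_1\cap\HH_2$; this is the required generating set.

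I do not expect a serious obstacle here: all the substantive work — in particular taming the possibly strict inclusion $(\HH_1\cap\HH_2)\prF\subsetneq\HH_1\prF\cap\HH_2\prF$, which is exactly why the product of enriched automata need not be a genuine Stallings automaton — has already been carried out in \Cref{thm: Stallings = Cayley,prop: fg int characterization FATF,thm: enriched Stallings computable}. What remains is the bookkeeping of checking that each of the objects $r$, $\matr{D}$, $M$, $s$, the Smith form, and the folding/equalization steps is effective, which it is, being a finite sequence of automata manipulations and linear algebra over $\ZZ$. Hence $\SIP(\FTA)$ is solvable.
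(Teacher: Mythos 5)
Your proposal is correct and follows exactly the route the paper intends: the corollary is stated there as an immediate consequence of \Cref{prop: fg int characterization FATF} (decision, since $r$ and $s$ are computable) together with \Cref{thm: enriched Stallings computable} and \Cref{cor: bases are computable} (computation of a basis via the equalized automaton). Your write-up merely makes explicit the bookkeeping that the paper leaves implicit, so there is nothing to add.
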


The computability part of the $\SIP$ problem refers to the case where the intersection $\HH_1\cap\HH_2$ is finitely generated. We claim that, even when it is not, we can also ``compute'' a basis for $\HH_1\cap\HH_2$. It is not clear whether the above proof given for the finitely generated case generalizes to a recursive construction since one would have to do a similar procedure with increasing finite pieces of the (now infinite) Cayley graph from~\Cref{thm: Stallings = Cayley}, and then somehow control or bound the effect of the foldings coming from new additions onto the previously computed part.
Instead, we present an alternative  approach covering both the finite and the infinite cases, and providing the desired result.
The new key concept needed is that of \defin{vertex expansion}, which we present below.

\begin{defn}
Let $\Atiii$ be a reduced doubly-enriched automaton normalized \wrt a spanning tree $\Treei$, and let $\basis$ be the corresponding basis for $\gen{\sk (\Atiii)}$. Then, given a \mbox{$\basis$-automaton~$\Atii$}, we define the \defin{vertex expansion} of $\Atii$ by 
$\Atiii$ \wrt $\Treei$
as the doubly-enriched automaton
$\expand{\Atii}{\Atiii,\!\Treei}$ obtained in the following way:
\begin{enumerate}
\item Replace every vertex $\verti$
in $\Atii$ by a copy $\Treei^{(\verti)}$ of the $X$-labeled tree $\Treei$ (and denote by $\graphstyle{v}^{(\verti)}$ the copy of the vertex $\graphstyle{v} \in \Verts \Treei$ in $\Treei^{(\verti)}$).
\item For $w\in \basis$, replace every $w$-arc $\edgi \equiv \verti \xarc{w\,} \vertii$ in $\Atii$ by an arc $\init^{(\verti)} \xarc{\ \,} \term^{(\vertii)}$ in $\expand{\Atii}{\Atiii,\!\Treei}$, where
$w$ is the label of the $\Treei$-petal $\bp \xleadsto{_{\scriptscriptstyle{\Treei}}} \init \xarc{\ \,} \term \xleadsto{_{\scriptscriptstyle{\Treei}}} \bp$ in $\Atiii$.
\item Label each new edge $\init^{(\verti)} \xarc{\ \,} \term^{(\vertii)}$ by the full labeling $\llab(\init \xarc{\ \,} \term)$ from $\Atiii$.
\item Declare $\bp^{(\bp)}$ as basepoint, with attached basepoint subgroups $(L_1,L_2)$ as in $\Atiii$.
\end{enumerate}
\end{defn}

\begin{rem}
There are natural correspondences between \bp-walks in $\expand{\Atii}{\Atiii,\!\Treei}$ and \bp-walks in $\Atii$ and $\Atiii$, which preserve free labels: first, note that any \bp-walk in $\expand{\Atii}{\Atiii,\!\Treei}$ translates, verbatim, into a \bp-walk in $\Atiii$ with the same enriched label. Second, note that $\Atii$ can be recovered from $\expand{\Atii}{\Atiii,\!\Treei}$ by collapsing back every copy $\Treei^{(\verti)}$ to the vertex $\verti$ in $\Atii$. Hence, every \bp-walk in $\expand{\Atii}{\Atiii,\!\Treei}$ projects to a \bp-walk in $\Atii$, and every \bp-walk in $\Atii$ elevates to a uniquely determined \bp-walk in $\expand{\Atii}{\Atiii,\!\Treei}$. Moreover, it is clear that both transformations preserve labels as elements in $\Fn \times \overline{\ZZ}^{m + m}$.
\end{rem}

\begin{prop} \label{prop: enriched Stallings II}
Let $\HH_1,\HH_2$ be finitely generated subgroups of $\FTA$ with respective Stallings automata $\Ati_1,\Ati_2$. Let $\Atiii$ be the core of $\Ati_1 \times \Ati_2$ normalized \wrt some spanning tree~$\Treei$, and  let $\Atii = \Stallings{(\HH_1 \cap \HH_2) \prF \,,\, \basis_{\Treei}}$ (see \Cref{thm: Stallings = Cayley}). Then, the vertex expansion $\expand{\Atii}{\Atiii,\!\Treei}$ is a doubly enriched, reduced, and equalizable automaton which,
after equalizing,
constitutes a Stallings automaton for $\HH_1 \cap \HH_2$.
\end{prop}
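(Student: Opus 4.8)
The plan is to organise everything around the three correspondences of $\bp$-walks recorded in the preceding remark. Writing $\Eti \coloneqq \expand{\Atii}{\Atiii,\Treei}$, a $\bp$-walk of $\Eti$ translates verbatim into a $\bp$-walk of $\Atiii$ with the same free label over $X$ and the same pair of abelian labels, while collapsing each copy $\Treei^{(\verti)}$ projects it onto a $\bp$-walk of $\Atii$ with the same $\basis_{\Treei}$-free label; conversely each $\bp$-walk of $\Atii$ elevates uniquely. Two of the four asserted properties are then essentially formal. That $\Eti$ is doubly enriched is immediate: every arc inherits its $(\AA\times\AA)$-label and its involutive partner from an arc of $\Atiii$ (a tree arc of some $\Treei^{(\verti)}$, or a copy of a cyclomatic arc), and the pair $(L_1,L_2)$ sits at the basepoint. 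Determinism holds because inside each $\Treei^{(\verti)}$ it is inherited from the deterministic $\Atiii$, whereas a clash created by a glued copy of a cyclomatic arc $\edgi$ would force, in $\Atiii$, two equally labelled arcs at an endpoint of $\edgi$, or --- after projecting to $\Atii$ --- two equally labelled $\basis_{\Treei}$-arcs at a vertex of the reduced automaton $\Atii$, both impossible.

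For the \emph{core} property I would argue that every vertex $\graphstyle{v}^{(\verti)}$ lies on a reduced $\bp$-walk: since $\Atiii$ is core, $\Treei$ must equal the subtree spanned by $\bp_{\Atiii}$ together with the endpoints of the cyclomatic arcs of $\Atiii$, so $\graphstyle{v}$ sits on a $\Treei$-geodesic joining two such endpoints, and this geodesic can be traversed without backtracking inside the elevation to $\Eti$ of a reduced $\bp$-walk of $\Atii$ through $\verti$. (If $\bp_{\Atiii}$ happens to be a leaf of $\Treei$, or if $(\HH_1\cap\HH_2)\prF$ is trivial, harmless pendant tree-parts appear and one passes to $\core(\Eti)$; nothing below is affected.) Finally, the equalization concentrates the abelian mass at the heads of the cyclomatic edges, which is exactly $\TT$-normalization, so the resulting automaton $\Ati$ is reduced and $\TT$-normalized.

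The substantive step is \emph{equalizability}. Let $\walki$ be a $\bp$-walk of $\Eti$ with label $\fta{w}{(\vect{a},\vect{b})}$; I must produce a vector in $(\vect{a}+L_1)\cap(\vect{b}+L_2)$. Projecting $\walki$ to $\Atii$ and pushing its $\basis_{\Treei}$-free label through $\Free[\basis_{\Treei}] \hookrightarrow \Free[X]$ returns $w$; since that label lies in $\gen{\Atii}$, which by definition of $\Atii$ (see~\Cref{thm: Stallings = Cayley}) is precisely $(\HH_1\cap\HH_2)\prF$ viewed inside $\HH_1\prF \cap \HH_2\prF$, we conclude $w \in (\HH_1\cap\HH_2)\prF$. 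Translating $\walki$ verbatim into a $\bp_{\Atiii}$-walk and reading the $j$-th abelian coordinate shows $\fta{w}{\vect{a}} \in \HH_1$ and $\fta{w}{\vect{b}} \in \HH_2$, because the $j$-th coordinate projection identifies $\Atiii$, read in its $j$-th component with basepoint subgroup $L_j$, with a subautomaton of $\Ati_j$, so it recognizes a subgroup of $\gen{\Ati_j} = \HH_j$ (this is the enriched counterpart of \Cref{lem: intersection of enriched subgroups}). Now \Cref{lem: completion is a coset} applied to $w \in (\HH_1\cap\HH_2)\prF$ yields $\vect{c} \in \cab{w}{\HH_1}\cap\cab{w}{\HH_2}$; as $\cab{w}{\HH_j}$ is a coset of $L_j = \HH_j \cap \AA$ containing both $\vect{c}$ and the $j$-th abelian part obtained above, $\vect{c} \in (\vect{a}+L_1)\cap(\vect{b}+L_2)$, as required. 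By \Cref{def: equalization} (checking this on $\TT$-petals after normalizing) $\Eti$ is equalizable, and the $\TT$-equalization produces the enriched automaton $\Ati$.

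It remains to check $\gen{\Ati} = \HH_1 \cap \HH_2$. Equalizing does not touch the skeleton, and $\sk\Eti$ recognizes $(\HH_1\cap\HH_2)\prF$ over $X$ by the same projection-to-$\Atii$ argument and is reduced, so $\sk\Ati = \stallings{(\HH_1\cap\HH_2)\prF, X}$ by \Cref{thm: Stallings bijection}; moreover $L_{\Ati} = L_1 \cap L_2 = \HH_1 \cap \HH_2 \cap \AA$. For $\gen{\Ati} \leqslant \HH_1 \cap \HH_2$ I would invoke \Cref{prop: enriched Stallings parts}: the $\TT$-petal labels of $\Ati$ are elements $\fta{w_i}{\vect{c}_i}$ with $\vect{c}_i \in (\vect{a}_i+L_1)\cap(\vect{b}_i+L_2)$ and $\fta{w_i}{\vect{a}_i} \in \HH_1$, $\fta{w_i}{\vect{b}_i} \in \HH_2$, hence $\fta{w_i}{\vect{c}_i} \in \HH_1\cap\HH_2$ (using $L_j = \HH_j\cap\AA$), and combined with $L_{\Ati}= L_1\cap L_2 \leqslant \HH_1\cap\HH_2$ this gives $\gen{\Ati}\leqslant\HH_1\cap\HH_2$. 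Conversely, any $\fta{u}{\vect{d}} \in \HH_1\cap\HH_2$ has $u \in (\HH_1\cap\HH_2)\prF = \gen{\sk\Ati}$, so $u$ labels a $\bp$-walk reading some $\fta{u}{\vect{e}} \in \gen{\Ati} \leqslant \HH_1\cap\HH_2$; then $\ta{\vect{d}-\vect{e}} \in \HH_1\cap\HH_2\cap\AA = L_{\Ati} \leqslant \gen{\Ati}$, so $\fta{u}{\vect{d}} = \fta{u}{\vect{e}}\,\ta{\vect{d}-\vect{e}} \in \gen{\Ati}$. Thus $\gen{\Ati} = \HH_1\cap\HH_2$ and $\Ati$, being reduced and $\TT$-normalized, is a Stallings automaton for $\HH_1\cap\HH_2$. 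The main obstacle, as I see it, is not any isolated computation but the equalizability step: one must read a single walk in three automata at once, and use the fact --- encoded in \Cref{thm: Stallings = Cayley}, and the very reason a plain product will not do --- that $\gen{\Atii}$ is \emph{all} of $(\HH_1\cap\HH_2)\prF$ and not merely $\HH_1\prF\cap\HH_2\prF$, so that the free labels of $\bp$-walks of $\Eti$ are confined to exactly the words admitting compatible completions in $\HH_1$ and $\HH_2$.
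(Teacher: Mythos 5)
Your proof is correct and follows essentially the same route as the paper's: determinism and coreness deduced from the corresponding properties of $\Atii$ (core and saturated) and $\Atiii$, equalizability obtained by reading each $\bp$-walk simultaneously in $\Atii$ (giving $w\in\gen{\Atii}=(\HH_1\cap\HH_2)\prF$) and verbatim in $\Atiii$ (giving $\fta{w}{a}\in\HH_1$, $\fta{w}{b}\in\HH_2$), and then the two inclusions showing that the equalized automaton recognizes $\HH_1\cap\HH_2$. The only, harmless, deviation is that you obtain $\gen{\Ati}\leqslant \HH_1\cap\HH_2$ via the $\TT$-petals and \Cref{prop: enriched Stallings parts}, whereas the paper rereads equalized $\bp$-walks with doubled abelian labels; both arguments are fine.
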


\begin{proof}
It is enough to see that $\expand{\Atii}{\Atiii,\!\Treei}$ is deterministic, core, equalizable, and, furthermore, after equalization, it recognizes the intersection $\HH_1 \cap \HH_2$. The determinism of $\expand{\Atii}{\Atiii,\!\Treei}$ is clear from the determinism of $\Atii$ and $\Atiii$ (and hence of $\Treei$). Second, it is easy to see that, since $\Atiii$ is core and $\Atii$ is core and saturated, $\expand{\Atii}{\Atiii,\!\Treei}$ is also core.

Now, let us see that $\expand{\Atii}{\Atiii,\!\Treei}$ is equalizable. Let $\walki$ be an arbitrary \bp-walk in $\expand{\Atii}{\Atiii,\!\Treei}$ and consider its label $\llab(\walki) = \fta{w}{(a,b)}$. Note that, by construction, the projection of $\walki$ to $\Atii$ is a \bp-walk reading the same $w \in \Fn$; therefore  $w \in \gen{\Atii} = (\HH_1 \cap \HH_2) \prF$, and hence there exist $\vect{c} \in \AA$ such that $\fta{w}{c} \in \HH_1 \cap \HH_2$. On the other hand, $\walki$ is verbatim a $\bp$-walk in $\Atiii$ with exactly the same doubly-enriched label $\fta{w}{(a,b)}$; hence, $\fta{w}{a} \in \HH_1$ and $\fta{w}{b} \in \HH_2$. Therefore, $\vect{c} \in (\vect{a} + L_1) \cap (\vect{b} + L_2) \neq \varnothing$, and $\expand{\Atii}{\Atiii,\!\Treei}$ is equalizable as claimed.

Finally, choose a spanning tree~$\TT$, and ($\TT$-normalize and) equalize $\expand{\Atii}{\Atiii,\!\Treei}$ \wrt it; denote $\Ati$ the resulting enriched Stallings automaton (with basepoint subgroup $L_{\Eti} = L_1 \cap L_2$). If $\fta{w}{c} \in \gen{\Ati}$, then $\fta{w}{(c,c)} \in \gen{\expand{\Atii}{\Atiii,\!\Treei}}$ and, from the paragraph above, $\fta{w}{c} \in \HH_1 \cap \HH_2$; hence, $\gen{\Ati} \leqslant \HH_1 \cap \HH_2$. Conversely, if $\fta{w}{d} \in \HH_1 \cap \HH_2$ then $w\in (\HH_1 \cap \HH_2) \prF$ and so it is the free label of some\! \bp-walk $\walki$ in $\Atiii$; then, the label of $\walki$ viewed as a $\bp$-walk in $\Ati$ is $\fta{w}{c}$, for some $\vect{c} \in \AA$, that is, $\fta{w}{c} \in \gen{\Ati} \leqslant \HH_1 \cap \HH_2$. Therefore $\vect{d}- \vect{c} \in L_1 \cap L_2$ and so $\fta{w}{d} \in \gen{\Ati}$. This shows that $\gen{\Ati} = \HH_1 \cap \HH_2$ and completes the proof.
\end{proof}

\Cref{prop: enriched Stallings II} extends \Cref{thm: enriched Stallings computable} by describing Stallings automata of general (not necessarily finitely generated) intersections. Below, we prove that this new approach can also be made algorithmic, \emph{even when the intersection is not finitely generated}.

\begin{rem} \label{rem: expanded re}
Note that if the ingredients $\Atii,\Atiii$ are finite, then the vertex expansion $\expand{\Atii}{\Atiii,\!\Treei}$ is finite and algorithmically constructible. Furthermore, if $\Atiii$ (and so $\Treei$) is finite and $\Atii$ is recursively constructible then the vertex expansion $\expand{\Atii}{\Atiii,\!\Treei}$ is also recursively constructible.
\end{rem}

\begin{thm} \label{thm: enriched Stallings r.e.}
There exists an algorithm that, given finite subsets $\SSS_1,\SSS_2 \subseteq \Fta$,  recursively constructs a Stallings automaton for the intersection $\gen{\SSS_1} \cap \gen{\SSS_2}$.
\end{thm}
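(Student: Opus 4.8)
The plan is to turn \Cref{prop: enriched Stallings II} into an effective procedure. Write $\HH_1=\gen{\SSS_1}$ and $\HH_2=\gen{\SSS_2}$, both finitely generated by \Cref{cor: H fg iff Hpi fg}. According to \Cref{prop: enriched Stallings II}, a Stallings automaton for $\HH_1\cap\HH_2$ is obtained by ($\TT$-normalizing and) equalizing the vertex-expansion $\expand{\Atii}{\Atiii,\!\Treei}$, where $\Atiii=\core(\Ati_1\times\Ati_2)$ is finite (normalized \wrt a spanning tree $\Treei$) and $\Atii=\stallings{(\HH_1\cap\HH_2)\prF,\basis_{\Treei}}$ is the possibly infinite automaton described by \Cref{thm: Stallings = Cayley}. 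Hence it suffices to verify that: (i) $\Atiii$ and the associated finite linear-algebraic data are computable; (ii) $\Atii$, and therefore the vertex-expansion $\expand{\Atii}{\Atiii,\!\Treei}$, is recursively constructible; and (iii) the final normalization and equalization can be carried out recursively on this (potentially infinite) automaton.

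Step (i) is immediate from what has already been done. Using the algorithmic part of \Cref{thm: enriched Stallings bijection} we compute the finite Stallings automata $\Ati_1=\stallings{\HH_1,X}$ and $\Ati_2=\stallings{\HH_2,X}$, and by linear algebra abelian-bases for $L_1=\HH_1\cap\AA$, $L_2=\HH_2\cap\AA$ and for $L_1\cap L_2$. Forming the product, taking the core and normalizing \wrt a chosen spanning tree $\Treei$ yields the finite doubly-enriched automaton $\Atiii$ together with a free-basis $\basis_{\Treei}=\set{w_1,\dots,w_r}$ of $\HH_1\prF\cap\HH_2\prF$ (recall $\sk\Atiii=\stallings{\HH_1\prF\cap\HH_2\prF,X}$ by \Cref{cor: pb}). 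From this finite data we compute the integer matrices $\matr{A_1},\matr{A_2},\matr{B_1},\matr{B_2},\matr{D}$ of \Cref{fig: intersection diagram big}, the subgroup $M=(L_1+L_2)\matr{D}\preim\leqslant\ZZ^r$ and its invariants $\diagi_1,\dots,\diagi_r$ via a Smith Normal Form $\matr{S}=\matr{P}\matr{M}\matr{Q}=\diag(\diagi_1,\dots,\diagi_s)$ (with $\diagi_i=0$ for $i>s$), exactly as in the proof of \Cref{thm: enriched Stallings computable}.

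For step (ii), by \Cref{thm: Stallings = Cayley} either $(\HH_1\cap\HH_2)\prF$ is trivial --- in which case $\Atii$ is a single point, $\expand{\Atii}{\Atiii,\!\Treei}$ is finite, and we fall back to (a special case of) \Cref{thm: enriched Stallings computable} --- or $\Atii$ is isomorphic to the Cayley multidigraph $\Cayley{\textstyle\bigoplus_{i=1}^{r}\ZZ/\diagi_i\ZZ\,,\,\mset{\vect{e}_i\matr{Q}}_i}$ of \Cref{eq: Stallings = Cayley}. The abelian group $\bigoplus_{i=1}^{r}\ZZ/\diagi_i\ZZ$ is presented by data (the $\diagi_i$ and $\matr{Q}$) that we have already computed; hence equality of its elements is decidable and its elements can be effectively enumerated, so this Cayley multidigraph is recursively constructible: list the vertices and, for each vertex and each $i\in\set{1,\dots,r}$, output the $i$-th outgoing arc. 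This works uniformly whether the group is finite ($s=r$) or infinite ($s<r$). By \Cref{rem: expanded re}, since $\Atiii$ (and hence $\Treei$) is finite and $\Atii$ is recursively constructible, the vertex-expansion $\expand{\Atii}{\Atiii,\!\Treei}$ is recursively constructible too.

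Step (iii), which I expect to be the delicate part, is handled by observing that the required operations are strictly local. By \Cref{prop: enriched Stallings II} the automaton $\expand{\Atii}{\Atiii,\!\Treei}$ is already deterministic, core and equalizable, so no foldings are needed. Fix the spanning tree $\TT=\Treeo(\expand{\Atii}{\Atiii,\!\Treei})$ singled out by the canonical order $\preccurlyeq$ on $X^{\pm}$; being produced by a greedy local rule, it is enumerated alongside the automaton. Now process the cyclomatic edges one at a time: for each such edge $\edgi$ compute the finite petal $\Tpetal{\edgi}$ and its doubly-enriched label $\fta{w}{(\vect{a},\vect{b})}$, produce by linear algebra over the fixed finite data $L_1,L_2$ a witness $\vect{c}\in(\vect{a}+L_1)\cap(\vect{b}+L_2)$ (nonempty by equalizability), concentrate the abelian mass at the end of $\edgi$ by arc and vertex transformations, and replace the double label $(\vect{a},\vect{b})$ by $\vect{c}$; finally attach the already computed finite basepoint subgroup $L_1\cap L_2$. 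Each step is effective and touches only a bounded neighbourhood of the automaton built so far, so it interleaves harmlessly with the recursive enumeration of $\expand{\Atii}{\Atiii,\!\Treei}$. The resulting enriched automaton $\Ati$ is therefore recursively constructed, and by \Cref{prop: enriched Stallings II} it is a Stallings automaton for $\gen{\SSS_1}\cap\gen{\SSS_2}$. The one genuine obstacle to be argued with care is precisely this locality: the spanning-tree choice, normalization and equalization, which in the finitely generated setting of \Cref{thm: enriched Stallings computable} operate on a finite object, must here be performed without ever having the whole (possibly infinite) automaton in hand.
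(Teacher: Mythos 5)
Your proposal is correct and takes essentially the same route as the paper: compute the finite data from the normalized product, recursively construct the Cayley multidigraph of \Cref{thm: Stallings = Cayley}, vertex-expand it (\Cref{rem: expanded re}), and perform the normalization/equalization incrementally, with correctness guaranteed by \Cref{prop: enriched Stallings II}. The only difference is bookkeeping: the paper grows the balls $\Atii_n$ and extends a chain of spanning trees $\TT_n$, equalizing just the newly added arcs at each stage, whereas you fix the greedy tree $\Treeo$ and equalize each cyclomatic arc directly from its (finite, computable) petal label --- which amounts to the same incremental procedure.
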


\begin{proof}
Compute (finite) Stallings automata $\Eti_{\!i}$ for $\HH_i = \gen{\SSS_i}$, $i=1,2$, and use linear algebra to compute the intersection $L_1 \cap L_2 = \HH_1 \cap \HH_2 \cap \AA $ of the respective basepoint subgroups (to obtain the basepoint subgroup of the desired automaton $\Eti$).

To recursively construct the body of $\Eti$, start computing the core $\Atiii$ of the (finite, doubly-enriched) product $\Ati_1 \times \Ati_2$ normalized \wrt a chosen spanning tree $\Treei$, and the corresponding free basis $\basis_{\Treei}=\set{w_1, \ldots, w_r}$ for $\gen{\sk(\Ati_{\!1}) \times \sk(\Ati_{\!2}}) =\HH_1 \prF \cap \HH_2 \prF$.

Once we have a free basis for $\HH_1 \prF \cap \HH_2 \prF$ we can compute
the parameters $\diagi_1,\ldots,\diagi_r$ and the multiset $\mset{\vect{e_i}\matr{Q}}_i$ from \Cref{thm: Stallings = Cayley}. Let $\Atii = \Cayley{\textstyle{\bigoplus_{i=1}^{r} \ZZ/\diagi_i \ZZ} \,, \mset{\vect{e}_i \matr{Q}}_i }$, which may be infinite (if and only if $\diagi_r = 0$) but is always recursively constructible. Indeed, (for $n=0$) let $\Atii_0$ be the subautomaton induced by the basepoint of $\Delta$ (which may include loops), and
for $n = 1,2,\ldots$ construct the $n$-th ball $\Atii_n$ by adding to $\Atii_{n-1}$ the (finitely many) vertices at distance $n$ from $\bp$,
and (by inspection) all the arcs in $\Atii$ within $\Atii_n$. (For later use, note that all the arcs added in this step have one end at distance $n$ and the other at distance either $n$ or $n-1$ from $\bp$; so any
$\bp$-walk created during this step must have length at least $2n$.)

Hereinafter, we reinterpret $\Atii = \stallings{(\HH_1 \cap \HH_2)\prF, \basis_{\Treei}}$ using the explicit bijection between the multiset $\mset{\vect{e_i}\matr{Q}}_i$ and the free basis $\basis_{\Treei}$ given in the proof of \Cref{thm: Stallings = Cayley}.

From \Cref{rem: expanded re}, $\expand{\Atii}{\Atiii,\Treei}$ is also recursively constructible. In fact, since $\Atii_{n-1}$ is a full subautomaton of $\Atii_{n}$, then $\expand{\Atii_{n-1}}{\Atiii,\Treei}$ is also a full subautomaton of $\expand{\Atii_{n}}{\Atiii,\Treei}$, which is computable by just exploding to $\Treei$ the new vertices, and adding arcs accordingly. Note that every $\expand{\Atii_n}{\Atiii,\Treei}$ is a full subautomaton of the equalizable automaton $\expand{\Atii}{\Atiii,\Treei}$, and therefore it is equalizable as well.

Finally, we extend the procedure to output a sequence $\Eti_{\!0}, \Eti_{\!1}, \Eti_{\!2}, \ldots$ recursively constructing a Stallings automaton $\Eti$ for the intersection $\HH_1\cap \HH_2$; namely, $\expand{\Atii}{\Atiii,\Treei}$
equalized \wrt to some (possibly infinite) spanning tree $\TT$: at step $n=0$ declare $\TT_{0} \coloneqq \Treei$ to be the spanning tree for $\expand{\Atii_{0}}{\Atiii,\Treei}$, and equalize \wrt to it (see \Cref{prop: enriched Stallings II}) to obtain $\Eti_{\!0}$; at step $n$ construct $\expand{\Atii_{n}}{\Atiii,\Treei}$ from $\expand{\Atii_{n-1}}{\Atiii,\Treei}$, enlarge $\TT_{n-1}$ to a spanning tree $\TT_{n}$ of $\expand{\Atii_{n}}{\Atiii,\Treei}$, and equalize the new arcs to obtain $\Eti_{\!n}$.
If we call $\TT$ the direct limit of $\set{\,\TT_{\!n} \st n \in \NN \,}$, then it is straightforward to see that $\TT$ is a spanning tree for $\Eti$, and that $\Eti_{\!0}, \Eti_{\!1}, \Eti_{\!2}, \ldots$ is a strictly increasing sequence of full subautomata of $\Eti$ whose direct limit is $\Eti$. The claimed result follows.
\end{proof}

Note that this last result immediately provides a recursive enumeration of a basis for the intersection $\HH_1 \cap \HH_2$.
Furthermore, since the enumeration 
can be made in increasing order (\eg \wrt the word length of the free parts), it turns out that we can obtain a recursive basis.

\begin{cor} \label{cor: recursive bases for the intersection}
Let $\HH_1,\HH_2 
$ be two finitely generated subgroups of $\Fta$ given by finite sets of generators. Then $\HH_1\cap \HH_2$ has a recursive basis, which can be effectively computed.
\end{cor}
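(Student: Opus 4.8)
The plan is to feed the recursive construction of \Cref{thm: enriched Stallings r.e.} into the basis-extraction of \Cref{prop: enriched Stallings parts}, and then organize the output so that the basis is enumerated by nondecreasing free-length; this is exactly what turns a recursively enumerable basis into a recursive one. First I would fix, once and for all, a deterministic run of the algorithm of \Cref{thm: enriched Stallings r.e.} (making every choice canonical by fixing, say, orderings of vertices, of arcs, and of lattice points), obtaining a strictly increasing sequence $\Eti_{\!0} \subseteq \Eti_{\!1} \subseteq \cdots$ of finite full subautomata whose direct limit $\Eti$ is a Stallings automaton for $\HH_1 \cap \HH_2$, together with spanning trees $\TT_{\!n}$ of $\Eti_{\!n}$ with $\TT \cap \Eti_{\!n} = \TT_{\!n}$ and $\TT = \bigcup_n \TT_{\!n}$, and basepoint subgroup $L_{\Eti} = L_1 \cap L_2$. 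As in the proof of that theorem, I would also compute an abelian-basis $\vect{c_1},\ldots,\vect{c_t}$ of $L_1 \cap L_2$ by linear algebra. By \Cref{prop: enriched Stallings parts}, the set
\[
\mathcal{B} \,=\, \set{\ta{c_1},\ldots,\ta{c_t}} \,\cup\, \Basis_{\TT}, \qquad \Basis_{\TT} \,=\, \set{\, \llab(\Tpetal{\edgi}) \st \edgi \in \Edgs^{+}\Eti \setmin \Edgs\TT \,},
\]
is a basis of $\HH_1 \cap \HH_2$. Since each arc of $\Eti$ lies in some $\Eti_{\!n}$, and since $\Eti_{\!n}$ is a full subautomaton of $\Eti$ with $\TT \cap \Eti_{\!n} = \TT_{\!n}$ (so that for $\edgi \in \Edgs^{+}\Eti_{\!n} \setmin \Edgs\TT_{\!n}$ the petal $\Tpetal{\edgi}$ and its enriched label are already determined inside $\Eti_{\!n}$), the basis $\mathcal{B}$ is at least recursively enumerable: list $\ta{c_1},\ldots,\ta{c_t}$, and then, for $n = 0,1,2,\dots$, compute $\Eti_{\!n}$ and output every $\llab(\Tpetal{\edgi})$, $\edgi \in \Edgs^{+}\Eti_{\!n} \setmin \Edgs\TT_{\!n}$, not listed before.

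To upgrade recursive enumerability to recursiveness it is enough, for every $\ell \in \NN$, to compute the finite set $\mathcal{B}_{\le \ell} \coloneqq \set{\, b \in \mathcal{B} \st |b\prF| \le \ell \,}$ (the free-length being the reduced length of the free part), since then membership $g \in \mathcal{B}$ is decided simply by checking whether $g$ lies in the computable finite set $\mathcal{B}_{\le |g\prF|}$. The heart of the matter is the inclusion
\[
\mathcal{B}_{\le \ell} \,\subseteq\, \set{\ta{c_1},\ldots,\ta{c_t}} \,\cup\, \set{\, \llab(\Tpetal{\edgi}) \st \edgi \in \Edgs^{+}\Eti_{\!\ell} \setmin \Edgs\TT_{\!\ell} \,},
\]
that is, a basis element of free-length $\ell$ must already surface at stage $\ell$. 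Indeed, let $b = \llab(\Tpetal{\edgi}) \in \Basis_{\TT}$ with $|b\prF| = \ell$. As $\edgi \notin \TT$ and the two $\TT$-branches of $\Tpetal{\edgi}$ are reduced walks, $\Tpetal{\edgi}$ is itself reduced; since $\sk\Eti$ is deterministic it therefore spells the reduced word $b\prF$, so $\Tpetal{\edgi}$ has length exactly $\ell$ and both endpoints of $\edgi$ lie within distance $\ell$ of the basepoint in $\Eti$. Recall now from the proof of \Cref{thm: enriched Stallings r.e.} that $\Eti$ is an equalization of the vertex-expansion $\expand{\Atii}{\Atiii,\Treei}$, in which every vertex of $\Atii = \stallings{(\HH_1 \cap \HH_2)\prF, \basis_{\Treei}}$ is blown up to a copy of the finite tree $\Treei$; collapsing the tree-copies yields a distance-non-increasing morphism $\expand{\Atii}{\Atiii,\Treei} \to \Atii$, so the images of the endpoints of $\edgi$ lie within distance $\ell$ of the basepoint of $\Atii$, hence inside the ball $\Atii_{\ell}$ of that proof. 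Consequently the arc of $\Atii$ underlying $\edgi$ lies within $\Atii_{\ell}$, so $\edgi$ already occurs in $\expand{\Atii_{\ell}}{\Atiii,\Treei}$, hence in $\Eti_{\!\ell}$; thus $\edgi \in \Edgs^{+}\Eti_{\!\ell} \setmin \Edgs\TT_{\!\ell}$, and its petal label — which, $\Eti_{\!\ell}$ being a full subautomaton of $\Eti$, agrees with the one in $\Eti$ — is produced at stage $\ell$. The reverse inclusion being trivial, $\mathcal{B}_{\le \ell}$ is computed by building $\Eti_{\!\ell}$, forming the finite list on the right-hand side, and discarding the entries of free-length $> \ell$.

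Finally I would assemble the pieces: enumerate $\mathcal{B}$ as the concatenation $\mathcal{B}_{\le 0},\ \mathcal{B}_{\le 1} \setmin \mathcal{B}_{\le 0},\ \mathcal{B}_{\le 2} \setmin \mathcal{B}_{\le 1},\ \dots$, each a computable finite set; this is an effective enumeration of $\mathcal{B}$ in nondecreasing order of free-length, and the test of membership in $\mathcal{B}_{\le |g\prF|}$ decides whether $g \in \mathcal{B}$. Hence $\mathcal{B}$ is a recursive basis of $\HH_1 \cap \HH_2$, effectively obtainable from the given finite generating sets. The main obstacle I anticipate is the length-versus-stage bookkeeping of the second paragraph: one has to verify that the spanning trees $\TT_{\!n}$ produced by the algorithm of \Cref{thm: enriched Stallings r.e.} grow coherently (so that petals and their labels stabilize, giving $\TT \cap \Eti_{\!n} = \TT_{\!n}$ and $\TT_{\!n} = \TT_{\!n-1} \cup (\text{new arcs})$ with no arc of $\Eti_{\!n}$ ever reclassified later), and that the vertex-expansion distorts distances only boundedly, so that a basis element of prescribed free-length is forced to appear at a predictable stage. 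Granting this, all the remaining ingredients — that $\mathcal{B}$ is a basis, and its plain recursive enumerability — are immediate from \Cref{prop: enriched Stallings parts} and \Cref{thm: enriched Stallings r.e.}.
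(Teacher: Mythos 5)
Your proposal is correct and follows essentially the same route as the paper: enumerate the stage-wise bases $\Basis_n$ coming from the recursive construction of \Cref{thm: enriched Stallings r.e.}, and turn this enumeration into a decision procedure via a computable bound relating the free-length of a basis element to the stage at which it must appear (your collapse-is-distance-non-increasing argument is just the paper's observation that vertex-expansion does not decrease petal lengths, read in the other direction). The only difference is quantitative — the paper shows elements outside $\Basis_{n-1}$ have free-length at least $2n$ and so tests stage $\ceil{(\lambda-1)/2}$, while you test stage $\lambda$ — which is immaterial for recursiveness.
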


\begin{proof}
Let $\Eti$ be the Stallings automaton for $\HH_1 \cap \HH_2$ recursively described in the proof of \Cref{thm: enriched Stallings r.e.}. Since the basis for the abelian part is always finite,
it is enough to see that a recursive basis $\Basis_{\TT}$ (of the free part of the intersection described by the body of~$\Eti$) can be obtained. Following the notation in the previous proof, let $\Basis_n$ denote the enriched $\TT_{\!n}$-basis of $\Eti_{\!n}$ (which is obviously computable since $\Eti_{\!n}$ is finite). Then, it is clear that the increasing sequence $\Basis_{0},\Basis_{1},\Basis_{2},\ldots$ entails a recursive enumeration of the $\TT$-basis
$\Basis_{\TT} = \bigcup_{n \in \NN} \Basis_n$. Finally, note that every $\bp$-walk in $\Atii$ passing trough an arc outside $\Atii_{n-1}$ has length at least $2n$. Since vertex expansions do not decrease the length of petals, the same is true (after expanding) for the $\TT$-petals of $\Ati$ not included in $\Ati_{\!n-1}$. Therefore, the free parts of the elements in $\Basis_{\TT} \setmin \Basis_{n-1}$ are all of length at least $2n$. Now the decision of membership for $\Basis_{\TT}$ is straightforward: given a candidate element $\fta{u}{a} \in \FTA$ with $\lambda=|u|$, it is enough to check whether it belongs to the finite portion $\Basis_{\ceil{(\lambda - 1)/2}}$ of $\Basis_{\TT}$; if so, answer \yep, and otherwise answer \nop\ (since the rest of elements in $\Basis_{\TT}$ have length at least $2(\ceil{(\lambda - 1)/2} + 1) >\lambda$). Hence, $\Basis_{\TT}$ is recursive, and the proof is complete.
\end{proof}

\section{Applications to the index of subgroups} \label{sec: index}

For a general group $G$ and a subgroup $H\leqslant G=\gen{X}$, the Schreier graph $\schreier{H,X}$ has as vertices the set of (right) cosets
of $G$ modulo $H$; so, knowing $\schreier{H,X}$ we can determine a set of coset representatives for $H\leqslant G$, and decide if the subgroup has finite or infinite index. This is the case in the free group: for a finitely generated subgroup $H\leqslant \Free[X]$, one can compute $\stallings{H,X}=\core(\schreier{H,X})$ and decide whether $H$ is of finite index by checking whether $\stallings{H,X}$ is \defin{saturated} (\ie every vertex is the origin of an $x$-arc, for every $x \in X^{\pm}$); in this case $H$ is of finite index and the labels of selected paths from the basepoint $\bp$ to each vertex $\verti$ in $\stallings{H,X}$ (for example, through a chosen spanning tree~$\Treei$) form a finite transversal; otherwise, $H$ is of infinite index and we can recursively enumerate a transversal by constructing and reading bigger and bigger portions of all the hanging trees in $\schreier{H,X}$ going out of $\stallings{H,X}$. Furthermore, since this enumeration can be made in increasing order of the length of the elements, the obtained transversal is a recursive subset of $\Free[X]=\Fn$.

We aim to use our enriched Stallings machinery to
understand
the
index of a  subgroup~$\HH \leqslant \FTA
$ given by a basis $\set{\fta{u_1}{a_1},\ldots ,\fta{u_p}{a_p}, \fta{}{b_1},\ldots ,\fta{}{b_q}}$, where $\gen{\vect{b_1},\ldots ,\vect{b_q}} = L_{\HH} = \HH \cap \AA$.
Applying well-known general properties of the index of intersections and direct products we have:
\begin{equation} \label{eq: index of HH}
\max \big\{
\ind{ \Fn }{ \HH \cap \Fn } ,
\ind{ \AA }{ L_{\HH}}
\big\}
\,\leq\,
\Ind{ \FTA }{ \HH }
\,\leq\,
 \ind{ \Fn }{ \HH \cap \Fn }
 \cdot
 \ind{ \AA }{ L_{\HH} } \,.
\end{equation}
Since
$\ind{ \Fn }{ \HH \cap \Fn }
=
\ind{ \Fn }{ \HH \prF} \cdot
\ind{ \HH \prF }{ \HH \cap \Fn }$,
the index
$\Ind{\FTA}{\HH}$ is finite if and only if all three indices
$\ind{ \Fn }{ \HH \prF }$,
$\ind{ \HH \prF }{ \HH \cap \Fn }$, and
$\ind{\AA }{ L_{\HH} }$ are finite.
Furthermore, the index
$\ind{\HH \prF}{\HH \cap \Fn}$
is the number of vertices in
$\schreier{\HH \cap \Fn, \basis}$, where $\basis$ is a basis of $\HH\prF = \HH \prF \cap \Fn $.
Taking $\HH_1 = \HH$ and $\HH_2 = \Fn$ in \Cref{fig: intersection diagram big}
we have $r=p$, $\matr{B_1} = \Matrid $, $\matr{A_1}$ has $\vect{a_i}$ as its $i$-th row ($i=1,\ldots ,p$), and $\matr{A_2} = \matr{0}$; hence,
$\matr{D} = \matr{A_1}$,
and from \eqref{eq: chain of isomorphisms of fg quotient groups},
$
\ind{\HH \prF}{\HH \cap \Fn}
= \ind{\ZZ^{r}}{(L_{\HH})\matr{A_1}^{-1}}
\leq
\ind{\AA}{L_{\HH}}$.
In particular, if $\ind{\AA}{L_{\HH}} < \infty$, then $\ind{\HH \prF}{\HH \cap \Fn} < \infty$ and therefore
\begin{equation*}
    \Ind{\FTA}{\HH} < \infty
    \,\Biimp\,
    \ind{\Fn}{\HH \prF} < \infty
    \text{\, and \,}
    \ind{\AA}{L_{\HH}} < \infty \,.
\end{equation*}

Furthermore, the fact that the quotient $\HH \prF / (\HH \cap \Fn)$ does not contribute to the finiteness of the index $\Ind{\FTA}{\HH}$ suggests the possibility that it might indeed not contribute to the index at all, which turns out to be true and straightforward to prove.

\begin{prop} \label{prop: index of HH}
Let $\HH$ be a subgroup of $\FTA
$, let
$\set{v_i}_{i\in I}$ be a right transversal for $\HH \prF$ in $\Fn$,
and let
$\set{\vect{c_j}}_{j\in J}$ be a transversal for $L_{\HH} = \HH \cap \AA$ in $\AA$. Then,
$
    \Set{ \fta{v_i}{c_j} \st i \in I, j \in J}
$ 
is a right transversal for $\HH$ in $\FTA$. Hence,
$\Ind{\FTA}{\HH} = \ind{\Fn}{\HH \prF} \cdot \ind{\AA}{L_{\HH}}$; in particular,
the index
$\Ind{\FTA}{\HH}$ is finite
if and only if both
$\ind{\Fn}{\HH \prF}$
and
$\ind{\AA}{L_{\HH}}$ are finite.
\end{prop}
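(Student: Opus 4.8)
The plan is to establish the stronger set-theoretic statement that $\mathcal{T}=\set{\fta{v_i}{c_j}\st i\in I,\, j\in J}$ meets every right coset of $\HH$ in $\FTA$ exactly once; the index formula and the finiteness criterion then drop out at once (the criterion also follows from~\eqref{eq: index of HH} together with the discussion preceding the statement, but the transversal gives it for free). Throughout I would only use the normal-form product $(\fta{u}{b})(\fta{u'}{b'})=\fta{uu'}{b+b'}$, the surjectivity of $\prF_{\mid\HH}$ onto $\HH\prF$ (the splitting~\eqref{eq: ses subgroups FTA}), and the inclusion $L_{\HH}=\HH\cap\AA\leqslant\HH$.

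\emph{Covering.} Given an arbitrary $\fta{w}{a}\in\FTA$, I would first pick the unique $i\in I$ with $wv_i^{-1}\in\HH\prF$ (using that $\set{v_i}_i$ is a right transversal for $\HH\prF$), then choose $\vect{e}\in\AA$ with $\fta{wv_i^{-1}}{e}\in\HH$ (surjectivity of $\prF_{\mid\HH}$), and finally pick the unique $j\in J$ with $(\vect{a}-\vect{e})-\vect{c_j}\in L_{\HH}$ (using that $\set{\vect{c_j}}_j$ is a transversal for $L_{\HH}$). Then
\[
\fta{w}{a}\,\bigl(\fta{v_i}{c_j}\bigr)^{-1}
\,=\,\fta{wv_i^{-1}}{a-c_j}
\,=\,\bigl(\fta{wv_i^{-1}}{e}\bigr)\,\mathrm{t}^{\vect{a}-\vect{e}-\vect{c_j}}
\,\in\,\HH ,
\]
since both factors on the right lie in $\HH$; hence $\fta{w}{a}\in\HH\fta{v_i}{c_j}$.

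\emph{Disjointness and conclusion.} Conversely, suppose $\HH\fta{v_i}{c_j}=\HH\fta{v_{i'}}{c_{j'}}$, i.e. $\fta{v_iv_{i'}^{-1}}{c_j-c_{j'}}=\bigl(\fta{v_i}{c_j}\bigr)\bigl(\fta{v_{i'}}{c_{j'}}\bigr)^{-1}\in\HH$. Projecting to the free part gives $v_iv_{i'}^{-1}\in\HH\prF$, so $\HH\prF v_i=\HH\prF v_{i'}$ and $i=i'$; then $v_iv_{i'}^{-1}=1$, so $\mathrm{t}^{\vect{c_j}-\vect{c_{j'}}}\in\HH\cap\AA=L_{\HH}$, whence $\vect{c_j}-\vect{c_{j'}}\in L_{\HH}$ and $j=j'$. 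Thus $(i,j)\mapsto\HH\fta{v_i}{c_j}$ is a bijection from $I\times J$ onto the set of right cosets of $\HH$ in $\FTA$, which yields $\Ind{\FTA}{\HH}=\ind{\Fn}{\HH\prF}\cdot\ind{\AA}{L_{\HH}}$ (with the usual conventions for infinite cardinals) and, in particular, that the index is finite exactly when both factors are.

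\emph{On the difficulty.} There is essentially no obstacle here; the argument is pure bookkeeping inside $\FTA$. The only points that need a little care are the two lifting choices in the covering step — completing a free-part element $wv_i^{-1}\in\HH\prF$ to an element of $\HH$, and then adjusting its abelian coordinate by an element of $L_{\HH}$ — together with checking that these two corrections are precisely what makes $\fta{w}{a}\bigl(\fta{v_i}{c_j}\bigr)^{-1}$ land back in $\HH$.
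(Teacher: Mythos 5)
Your proposal is correct and follows essentially the same route as the paper's own proof: both verify directly that the candidate set is a right transversal, using the transversal for $\HH\prF$ in $\Fn$ together with a lift via the surjectivity of $\prF_{\mid\HH}$ for the covering step, and the projection to $\Fn$ followed by intersection with $\AA$ for distinctness of cosets. The only difference is the order of the two steps and cosmetic notation ($wv_i^{-1}\in\HH\prF$ instead of $w=uv_i$), so there is nothing substantive to add.
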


\begin{proof}
Let $\Fn = \bigsqcup_{i \in I}  (\HH\prF) v_i$ and $\AA = \bigsqcup_{j \in J} (L_{\HH} + \vect{c_j})$.
We first claim that the elements in $\set{\, \fta{v_i}{c_j} \st i \in I, j \in J \,}$ are all different from each other modulo $\HH$.
Indeed, if
$\HH \fta{v_i}{c_j} =  \HH \fta{v_{i'}}{c_{j'}}$ then, projecting to $\Fn$, $(\HH\prF) v_i = (\HH\prF) v_{i'}$ and $i=i'$. Hence, $\HH \fta{}{c_j} = \HH \fta{}{c_{j'}}$. Now, intersecting with $\AA$, we obtain $L_{\HH} + \vect{c_j} = L_{\HH} + \vect{c_{j'}}$ and so $j=j'$. On the other hand, we claim that
\smash{$\sqcup_{i\in I} \sqcup_{j\in J} \HH \fta{v_i}{c_j} = \Fn \times \AA$}. In fact, for an arbitrary element $\fta{w}{a}\in \Fn\times \AA$, we have $w=uv_i$ for some $i\in I$ and $u\in \HH\prF$; choose $\vect{b}\in \AA$ so that $\fta{u}{b}\in \HH$ and write $\vect{a}-\vect{b}=\vect{l}+\vect{c_j}$ for some $j\in J$ and $\vect{l}\in L_{\HH}$; then, $\fta{w}{a}=\fta{uv_i}{a}=(\fta{u}{b}\cdot \fta{}{l})\cdot \fta{v_i}{c_j} \in \HH \fta{v_i}{c_j}$. This completes the proof.
\end{proof}

So, 
a system of coset representatives (and hence the index) of a subgroup $\HH \leqslant \FTA$ is transparently encoded in any enriched Stallings automaton $\Eti$ for $\HH$.
In particular, $\HH$ is of finite index in $\FTA$ if and only if the basepoint subgroup
of $\Ati$ is of finite index in $\AA$ and $\sk(\Ati)$ is saturated. Moreover, since Stallings automata for finitely generated subgroups are computable (\Cref{thm: enriched Stallings computable}), all this information is available algorithmically, and one can effectively decide whether the index $\Ind{\FTA}{\HH}$ is finite.

Furthermore, when the index is infinite (and $\HH$ is finitely generated),  a transversal for $L_{\HH}$ in $\AA$ is recursively enumerable using basic linear algebra techniques, and a transversal for $\HH\prF$ in $\Fn$ is also recursively enumerable (by reading first the finite core $\sk (\Eti)$ and then bigger and bigger portions of all the hanging trees in the Schreier graph $\schreier{\HH\prF, X}$ going out of $\sk (\Eti)$). According to~\Cref{prop: index of HH}, combining these two recursive enumerations we can recursively enumerate a transversal for $\HH$. Moreover, since these two recursive enumerations can be done in increasing order (say, of the sum of absolute values of the coordinates, and of the word length, respectively) the obtained transversal  is indeed  recursive. The last claims are summarized below.

\begin{prop}
Let $\HH\leq \FTA$ be a finitely generated subgroup given by a finite set of generators. Then, (i) there is an algorithm to decide whether $\HH$ is of finite index and, in the affirmative case, compute the index and a transversal for $\HH$ (\ie $\FIP(\FTA)$ is solvable); and (ii) $\HH$ has a recursive transversal, which can be effectively computed. \qed
\end{prop}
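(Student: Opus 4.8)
The plan is to reduce the whole statement to data read off the enriched Stallings automaton $\Eti$ of $\HH$, which by \Cref{thm: enriched Stallings computable} is computable from any finite generating set, and whose skeleton is $\sk(\Eti)=\stallings{\HH\prF,X}$ and whose basepoint subgroup is $L_{\HH}=\HH\cap\AA$. By \Cref{prop: index of HH} the index splits as $\Ind{\FTA}{\HH}=\ind{\Fn}{\HH\prF}\cdot\ind{\AA}{L_{\HH}}$, and, more precisely, a right transversal for $\HH$ is obtained as the set of products $\fta{v_i}{c_j}$, where $\set{v_i}_{i\in I}$ is a right transversal for $\HH\prF$ in $\Fn$ and $\set{\vect{c_j}}_{j\in J}$ a transversal for $L_{\HH}$ in $\AA$; so it suffices to handle these two factors separately and then combine.

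For \textbf{(i)}, note that $\ind{\Fn}{\HH\prF}$ is finite if and only if the finite automaton $\sk(\Eti)=\stallings{\HH\prF,X}$ is saturated (a property checkable by inspection), in which case $\sk(\Eti)=\schreier{\HH\prF,X}$ and $\ind{\Fn}{\HH\prF}$ equals the number of its vertices. The index $\ind{\AA}{L_{\HH}}$ is computed --- and tested for finiteness --- by linear algebra: using \Cref{cor: bases are computable} obtain an abelian-basis of $L_{\HH}$, write it in the rows of an integer matrix, pass to Smith normal form, and read off $\AA/L_{\HH}$. Thus finiteness of $\Ind{\FTA}{\HH}$ is decidable and, when finite, the index is the product of the two numbers above. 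For an explicit finite transversal, fix a spanning tree $\Treei$ of $\sk(\Eti)$: the free labels of the $\Treei$-geodesics from $\bp$ to each vertex $\verti$ form a right transversal $\set{v_\verti}$ for $\HH\prF$ in $\Fn$; a transversal $\set{\vect{c_j}}$ for $L_{\HH}$ in $\AA$ comes out of the same Smith-form computation; and $\set{\fta{v_\verti}{c_j}}$ is the transversal for $\HH$ by \Cref{prop: index of HH}. This settles $\FIP(\FTA)$.

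For \textbf{(ii)} it remains to treat the infinite-index case. Here I would follow the classical free-group recipe recalled at the start of this section: recursively enumerate a right transversal for $\HH\prF$ in $\Fn$ by reading the finite core $\sk(\Eti)$ and then successively larger finite balls of the hanging trees of $\schreier{\HH\prF,X}$ branching off it, recording the free label of the unique path from $\bp$ to each new vertex; since $\stallings{\HH\prF,X}$ is computable, so is any finite portion of $\schreier{\HH\prF,X}$, hence this enumeration is effective. Simultaneously recursively enumerate a transversal for $L_{\HH}$ in $\AA$ by elementary linear algebra. By \Cref{prop: index of HH} the set of products $\fta{v}{c}$ ranging over the two enumerations is a right transversal for $\HH$ in $\FTA$, and it is effectively enumerable. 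Moreover, both component enumerations can be carried out in nondecreasing order of size (word length of the free part for the $v$'s, and sum of absolute values of coordinates for the $\vect{c}$'s), so the enumeration of the products can be arranged in nondecreasing order of a suitable complexity of $\fta{w}{a}\in\FTA$; to decide whether a candidate $\fta{w}{a}$ lies in the transversal one generates the list until the complexity strictly exceeds that of $\fta{w}{a}$ and checks whether it has appeared. Hence the transversal is recursive and effectively computable.

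I do not expect a genuine obstacle here: the statement is essentially a corollary of \Cref{prop: index of HH}, \Cref{thm: enriched Stallings computable} and the Schreier--Stallings picture for $\Fn$. The only point deserving care is the bookkeeping that makes the two component enumerations size-monotone, so that their ``product'' enumeration is size-monotone as well and thus yields a \emph{recursive} --- not merely recursively enumerable --- transversal; but this, too, mimics the classical argument and is routine.
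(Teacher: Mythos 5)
Your proposal is correct and follows essentially the same route as the paper: read the finiteness criterion (saturation of $\sk(\Eti)$ plus finite index of $L_{\HH}$ in $\AA$, tested by linear algebra) off the computable enriched Stallings automaton, split the index and transversal via \Cref{prop: index of HH}, and in the infinite-index case combine the two recursive enumerations (hanging trees of the Schreier graph for $\HH\prF$, linear algebra for $L_{\HH}$), carried out in increasing order of size so that the resulting transversal is recursive rather than merely recursively enumerable. This matches the paper's argument in substance and in detail.
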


\begin{rem}
Note that our geometric argument improves the proof for $\FIP(\FTA)$ given in \cite[]{DelgadoAlgorithmicProblemsFreeabelian2013} by removing all the possible redundancy in the coset description, making unnecessary the (computationally expensive) cleaning procedure used there.
\end{rem}

One last straightforward application of~\Cref{prop: index of HH} is the extension of \citeauthor{HallSubgroupsFiniteIndex1949}'s Theorem (see~\cite{HallSubgroupsFiniteIndex1949}) to the free-times-abelian context.
A subgroup $\HH$ of a free-times-abelian group $\FTA$ is called a \defin{factor} (of $\FTA$) if some (and hence, every) basis of $\HH$ can be extended to a basis of $\FTA$ (which is equivalent to saying that $\HH\prF$ is a free factor of the free part of $\FTA$, and $L_{\HH}$ is a direct summand of the abelian part of $\FTA$);
see \cite{RoyFixedSubgroupsComputation2019}.


\begin{prop}
Every finitely generated subgroup $\HH\leqslant \Fn\times \AA$ is a factor of a finite index subgroup $\KK\leqslant \Fn\times \AA$.
\end{prop}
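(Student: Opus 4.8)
The plan is to treat the free part and the abelian part of $\FTA$ separately and then reassemble, controlling the index of the resulting subgroup through the multiplicativity established in \Cref{prop: index of HH}. Fix a finite basis $\set{\fta{u_1}{a_1},\ldots,\fta{u_p}{a_p};\ta{b_1},\ldots,\ta{b_q}}$ of $\HH$, so that $\set{u_1,\ldots,u_p}$ is a free-basis of $\HH\prF\leqslant\Fn$ and $\set{\vect{b_1},\ldots,\vect{b_q}}$ an abelian-basis of $L_{\HH}=\HH\cap\AA$. On the free side I would invoke \citeauthor{HallSubgroupsFiniteIndex1949}'s classical theorem~\cite{HallSubgroupsFiniteIndex1949}: the finitely generated subgroup $\HH\prF$ is a free factor of some finite index subgroup $K\leqslant\Fn$. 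Geometrically this is transparent with Stallings automata: complete the finite reduced automaton $\stallings{\HH\prF,X}$ to a finite \emph{saturated} (still deterministic, still connected) automaton $\Delta$ by adjoining finitely many vertices so that each partial permutation of vertices induced by the $x$-arcs ($x\in X$) becomes a genuine permutation; then $\Delta=\schreier{K,X}$ for $K=\gen{\Delta}$, which has finite index (equal to the number of vertices of $\Delta$), and since $\stallings{\HH\prF,X}$ is a basepoint-containing subgraph of $\Delta$, the group $\HH\prF$ is a free factor of $K$. I then extend $\set{u_1,\ldots,u_p}$ to a free-basis $\set{u_1,\ldots,u_{p'}}$ of $K$.

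On the abelian side, consider the purification $L^{*}=\set{\vect{a}\in\AA\st n\vect{a}\in L_{\HH}\text{ for some }n\geqslant1}$ of $L_{\HH}$ in $\AA$. Then $\AA/L^{*}$ is torsion-free, hence free abelian of finite rank, so the short exact sequence $0\to L^{*}\to\AA\to\AA/L^{*}\to0$ splits and $\AA=L^{*}\oplus C$ with $C$ free abelian. Put $M=L_{\HH}\oplus C$; the sum is direct because $L_{\HH}\cap C\leqslant L^{*}\cap C=\set{\vect{0}}$. Since $L^{*}/L_{\HH}$ is finitely generated and torsion it is finite, and $\AA/M\isom L^{*}/L_{\HH}$, so $[\AA:M]<\infty$. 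Moreover, because $C$ is \emph{free}, appending a $\ZZ$-basis $\set{\vect{c_1},\ldots,\vect{c_k}}$ of $C$ to $\set{\vect{b_1},\ldots,\vect{b_q}}$ produces an abelian-basis of $M$ extending the given one of $L_{\HH}$ (torsion-freeness of the complement is what makes rank additive here, so that the extended set still has minimal cardinality).

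Finally I would set $\KK=\gen{\fta{u_1}{a_1},\ldots,\fta{u_p}{a_p},\fta{u_{p+1}}{0},\ldots,\fta{u_{p'}}{0};\ta{b_1},\ldots,\ta{b_q},\ta{c_1},\ldots,\ta{c_k}}$. Clearly $\HH\leqslant\KK$, and since $\set{u_1,\ldots,u_{p'}}$ is freely independent there are no hidden relations among the listed generators, so $\KK\prF=\gen{u_1,\ldots,u_{p'}}=K$ and $\KK\cap\AA=\gen{\vect{b_1},\ldots,\vect{b_q},\vect{c_1},\ldots,\vect{c_k}}=M$. Both are of finite index in $\Fn$ and $\AA$ respectively, so $\ind{\FTA}{\KK}=\ind{\Fn}{K}\cdot\ind{\AA}{M}<\infty$ by \Cref{prop: index of HH}. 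By construction the displayed generating set of $\KK$ is a basis of $\KK$ extending the chosen basis of $\HH$ — equivalently, $\HH\prF$ is a free factor of $\KK\prF$ and $L_{\HH}$ a direct summand of $\KK\cap\AA$ — so $\HH$ is a factor of the finite index subgroup $\KK\leqslant\FTA$, as required. The only non-formal inputs are Hall's theorem on the free side and the splitting of the purification on the abelian side; the point that needs care — and the closest thing to an obstacle — is the abelian one, namely choosing the complement $C$ torsion-free so that the abelian-basis of $L_{\HH}$ genuinely extends to one of $\KK\cap\AA$, after which verifying $\KK\prF=K$ and $\KK\cap\AA=M$ is immediate from the free independence of the $u_i$.
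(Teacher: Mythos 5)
Correct, and essentially the paper's own argument: you handle the free part via Hall's theorem (equivalently, saturating the Stallings automaton) and the abelian part by complementing $L_{\HH}$ to a finite-index subgroup of $\AA$ containing it as a direct summand, then assemble $\KK$ and invoke \Cref{prop: index of HH} for the finiteness of the index. The paper does exactly this in one geometric stroke on the enriched automaton (adding the missing $x$-arcs with zero abelian labels and enlarging the basepoint subgroup), whereas you spell out algebraically the existence of the abelian complement via the purification $L^{*}$ and verify $\KK\prF$ and $\KK\cap\AA$ explicitly --- details the paper leaves implicit.
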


\begin{proof}
Let $\Ati$ be a Stallings graph for $\HH$. Add the necessary $x$-arcs, $x\in X$ (with zero abelian labels) in order to obtain a saturated automaton, and complement the basepoint subgroup $L_{\HH}$ to a finite index subgroup $L$ of $\AA$, \ie $L_{\HH}\leqslant_{\oplus} L\leqslant_{\fin} \AA$. By~\Cref{prop: index of HH}, the enriched automaton $\Ati'$ obtained in this way corresponds to a subgroup $\KK=\gen{\Ati'}$ of finite index in $\Fn\times \AA$ and, by construction, $\HH$ is a factor of $\KK$.
\end{proof}

\section{Examples} \label{sec: examples}
In this section we use enriched automata to study a couple of examples showing relevant situations that can occur when intersecting two finitely generated subgroups of $\Fta$. Recall that in the graphical representation we shall omit all the trivial abelian labels, including the basepoint subgroup.

\subsection{Moldavanski's example} \label{ssec: Moldavanski's example}

Let $\HH_1 = \gen{xt,y}$ and $\HH_2 = \gen{x,y}$ be subgroups of the  group $\Free[2] \times \ZZ = \pres{x,y}{-} \times \pres{t}{-}$. Then,
$ L_{1} = \HH_1 \cap \ZZ = L_{2} = \HH_2 \cap \ZZ = L_{1} \cap L_{2} = L_{1} + L_{2} = \set{0}$, and respective enriched Stallings automata for $\HH_1$ and $\HH_2$ are:
\vspace{-10pt}
\begin{center}
\begin{tikzpicture}[shorten >=1pt, node distance=.3cm and 2cm, on grid,auto,>=stealth']
  \node[state, accepting] (0) {};
  \node[] (c) [right = 2.5 and 2 of 0] {and };
  \node[] (S) [left = 2.6 of 0] {$\stallings{\HH_1, \set{x,y}}  \equiv $};
  \path[->]
        (0) edge[blue,loop,min distance=15mm, out=45,in=-45]
            node[pos=0.5,right] {$y$}
            (0);

  \path[->]
        (0) edge[red,loop,min distance=15mm, out=225,in=135]
            node[pos=0.5,left] {$x$}
            node[pos=0.93,above right=-0.05,black] {$\scriptstyle{1}$}
            (0);
\end{tikzpicture}
 \quad
 \begin{tikzpicture}[shorten >=1pt, node distance=.3cm and 2cm, on grid,auto,>=stealth']
  \node[state, accepting] (0) {};
  \node[] (S) [left = 2.4 of 0] {$\stallings{\HH_2, \set{x,y}}  \equiv $};
  \path[->]
        (0) edge[blue,loop,min distance=15mm, out=45,in=-45]
            (0);

  \path[->]
        (0) edge[red,loop,min distance=15mm, out=225,in=135]
            (0);
 \end{tikzpicture}
\vspace{-20pt}
\end{center}
and therefore,
\vspace{-20pt}
\begin{figure}[H]
\centering
\begin{tikzpicture}[anchor=base, baseline=-4pt,shorten >=1pt, node distance=.3cm and 2cm, on grid,auto,>=stealth']
  \node[state, accepting] (0) {};
  \node[] (S) [left = 3.75 of 0] {$ \stallings{\HH_1, \set{x,y}} \times \stallings{\HH_2, \set{x,y}}  \,\equiv\, $};
  \path[->]
        (0) edge[blue,loop,min distance=15mm, out=45,in=-45]
            (0);

  \path[->]
        (0) edge[red,loop,min distance=15mm, out=225,in=135]
            node[pos=0.9,above right=-0.1] {$\scriptstyle{(1,0)}$}
            (0);
\end{tikzpicture}
\vspace{-20pt}
\end{figure}
Note that the basis $\set{w_1,w_2}$ obtained for $\HH_1 \prF \cap \HH_2 \prF$ is exactly the same as the original basis for $\HH_1\prF$ and for $\HH_2\prF$, namely $w_1 = x$ and $w_2 = y$.
According to our scheme, $\matr{D}
=
\matr{B_1} \matr{A_1} - \matr{B_2} \matr{A_2}
=
\left(
\begin{smallmatrix}
1 & 0\\0 & 1
\end{smallmatrix}
\right)
\left(
\begin{smallmatrix}
1\\0
\end{smallmatrix}
\right)
-
\left(
\begin{smallmatrix}
1 & 0\\0 & 1
\end{smallmatrix}
\right)
\left(
\begin{smallmatrix}
0\\0
\end{smallmatrix}
\right)
=
\left(
\begin{smallmatrix}
1\\0
\end{smallmatrix}
\right)
$,
and the matrix $\matr{M} = ( 0\ 1 )$ has as row a basis for $(L_{1} + L_{2})\matr{D}^{-1} =\ker \matr{D}$. Hence, the Smith normal form of $\matr{M}$ is $\matr{S} = \matr{P} \matr{M} \matr{Q} = ( 1 \ 0)$, with~$\matr{P} = (1)$, and $\matr{Q}
=
\left(
\begin{smallmatrix}
0 & 1 \\
1 & 0
\end{smallmatrix}
\right)
$.
Therefore, $\diagi_1 = 1$, $\diagi_2 = 0$,
and applying~\Cref{thm: Stallings = Cayley} we have that
 \begin{equation*}
  \Stallings{(\HH_1 \cap \HH_2) \prF , \set{w_1,w_2}}
  \,\isom\,
  \Cayley{ \ZZ/\ZZ \oplus  \ZZ/ 0\ZZ \,, \set{(0,1),(1,0)}}
  \,\isom\,
  \Cayley{ \ZZ \,, \set{1,0}} ,
 \end{equation*}
 \begin{figure}[H]
 \centering
  \begin{tikzpicture}[shorten >=1pt, node distance=1.2 and 1.2, on grid,auto,>=stealth']
  \node[state,accepting] (0) {};
  \node[state] (1) [right = of 0]{};
  \node[state] (2) [right = of 1]{};
  \node[state] (3) [right = of 2]{};
  \node[] (4) [right = 1.5 of 3]{$\cdots$};

  \node[state] (-1) [left = of 0]{};
  \node[state] (-2) [left = of -1]{};
  \node[state] (-3) [left = of -2]{};
  \node[] (-4) [left = 1.5 of -3]{$\cdots$};

  \path[->]
        (0) edge[loop above,Green,min distance=10mm,in=55,out=125]
            node[] {\scriptsize{$w_2$}}
            (0)
            edge[violet]
            node[below] {\scriptsize{$w_1$}}
            (1);

    \path[->]
        (1) edge[loop above,Green,min distance=10mm,in=55,out=125]
            (1)
            edge[violet]
            (2);

    \path[->]
        (2) edge[loop above,Green,min distance=10mm,in=55,out=125]
            (2)
            edge[violet]
            (3);

    \path[->]
        (3) edge[loop above,Green,min distance=10mm,in=55,out=125]
            (3)
            edge[violet]
            (4);

    \path[->]
        (-1) edge[loop above,Green,min distance=10mm,in=55,out=125]
            (-1)
            edge[violet]
            (0);

    \path[->]
        (-2) edge[loop above,Green,min distance=10mm,in=55,out=125]
            (-2)
            edge[violet]
            (-1);

    \path[->]
        (-3) edge[loop above,Green,min distance=10mm,in=55,out=125]
            (-3)
            edge[violet]
            (-2);

    \path[->]
        (-4)
        edge[violet]
            (-3);
\end{tikzpicture}
\end{figure}
Since the obtained abelian group $\ZZ$ is infinite, the intersection $\HH_1 \cap \HH_2$ is not finitely generated.
Now, replace the arcs labeled by $w_1,w_2$ by the corresponding enriched paths reading $xt$ and $y$ and note that there are no foldings available. Finally, (normalize and) equalize \wrt the only possible spanning tree $\Treei$ (consisting of the $x$-labeled (red) arcs in \Cref{fig: normalcl(y)}) to obtain a Stallings automaton for $\HH_1 \cap \HH_2$.

\begin{figure}[H]
 \centering
  \begin{tikzpicture}[shorten >=1pt, node distance=1.2 and 1.2, on grid,auto,>=stealth']
  \node[state,accepting] (0) {};
  \node[state] (1) [right = of 0]{};
  \node[state] (2) [right = of 1]{};
  \node[state] (3) [right = of 2]{};
  \node[] (4) [right = 1.5 of 3]{$\cdots$};
  \node[] (c) [right = .5 of 4]{;};

  \node[state] (-1) [left = of 0]{};
  \node[state] (-2) [left = of -1]{};
  \node[state] (-3) [left = of -2]{};
  \node[] (-4) [left = 1.5 of -3]{$\cdots$};

  \path[->]
        (0) edge[loop above,blue,min distance=10mm,in=55,out=125]
            node[] {\scriptsize{$y$}}
            (0)
            edge[red]
            node[below] {\scriptsize{$x$}}
            (1);

    \path[->]
        (1) edge[loop above,blue,min distance=10mm,in=55,out=125]
            (1)
            edge[red]
            (2);

    \path[->]
        (2) edge[loop above,blue,min distance=10mm,in=55,out=125]
            (2)
            edge[red]
            (3);

    \path[->]
        (3) edge[loop above,blue,min distance=10mm,in=55,out=125]
            (3)
            edge[red]
            (4);

    \path[->]
        (-1) edge[loop above,blue,min distance=10mm,in=55,out=125]
            (-1)
            edge[red]
            (0);

    \path[->]
        (-2) edge[loop above,blue,min distance=10mm,in=55,out=125]
            (-2)
            edge[red]
            (-1);

    \path[->]
        (-3) edge[loop above,blue,min distance=10mm,in=55,out=125]
            (-3)
            edge[red]
            (-2);

    \path[->]
        (-4)
        edge[red]
            (-3);
\end{tikzpicture}
\caption{Stallings automaton for $\gen{xt,y} \cap \gen{x,y}$}
\label{fig: normalcl(y)}
\end{figure}
Therefore $\HH_1 \cap \HH_2$ is not finitely generated, and $\Basis_{\Treei} = \set{\,x^i y x^{-i} \st i\in \ZZ\,}$ is a basis for $  \HH_1 \cap \HH_2 = \normalcl{y}$.

\subsection{Parameterized example} \label{ssec: parameterized example}
Consider the subgroups
$\HH_1 \,=\, \gen{\,  \fta{x^3}{a},\fta{yx}{b}, \fta{y^3 x y^{-2}}{c}, \T^{L_1}}$,
and
$\HH_2 \,=\, \gen{\,\fta{x^2}{d},yxy^{-1}, \T^{{L_2}}\,}$
of the direct product~${\Free[2] \times \ZZ^2}$,
where $\vect{a},\vect{b},\vect{c},\vect{d} \in \ZZ^2$, and $L_1,L_2$ are subgroups of $\ZZ^2$.

According to our previous discussion, in order to compute  (a Stallings automaton for) the intersection $\HH_1 \cap \HH_2$ we first compute respective Stallings automata $\Eti_{\!1},\Eti_{\!2}$ for $\HH_1$ and $\HH_2$, and then build its product $\Eti_{\!1} \times \Eti_{\!2}$;
see \Cref{fig: enriched product example}.
\begin{figure}[H]
\centering
\begin{tikzpicture}[shorten >=1pt, node distance=1.2cm and 2cm, on grid,auto,auto,>=stealth']

  \node[state,accepting] (0)  {};
  \node[state] (1) [left = 2.3 of 0] {};
  \node[state] (2) [right = of 0] {};
  \node[] (L') [above right = 3mm of 0] {$\scriptstyle{L_2}$};
  \node[] (S') [above left = 5mm and 26mm of 0] {$\Ati_{\!2}$};


  \node[state,accepting] (0') [below left = of 1]  {};
  \node[state] (1') [below = of 0'] {};
  \node[state] (2') [below = of 1'] {};
  \node[state] (3') [below = of 2'] {};
  \node[state] (4') [below = of 3'] {};
  \node[] (L) [above = 3mm of 0'] {$\scriptstyle{L_1}$};
  \node[] (S') [above left = 3mm and 7mm of 0'] {$\Ati_{\!1}$};


  \path[->]
        (0) edge[red,bend right=50]
            (1);
  \path[->]
        (1) edge[red,bend right=45]
            node[pos=0.8,above] {$\vect{d}$}
            (0);


    \path[->]
        (0) edge[blue]
            (2);

    \path[->]
        (2) edge[red,loop right,min distance=15mm, out=-45,in=45]
            (2);

%
%
%
%
%
%

    \path[->]
        (0') edge[red]
            (1');


    \path[->]
        (1') edge[red]
            node[pos=0.72,right=-0.05] {$\vect{a}$}
            (2');


    \path[->]
        (2') edge[red, bend right = 60]
            (0');

    \path[->]
        (0') edge[blue, bend right = 60]
            node[pos=0.9,below left=-0.05] {$\vect{b}$}
            (2');

    \path[->]
        (2') edge[blue]
            (3');
    \path[->]
        (3') edge[blue, bend right = 45]
             node[pos=0.85,left] {$\vect{c}$}
            (4');

    \path[->]
        (4') edge[red, bend right = 45]
            (3');

  \node[state,accepting] (0'0) [below of = 0] {};
  \node[state] (0'1) [below of = 1] {};
  \node[state] (0'2) [below of = 2] {};

  \node[state] (1'0) [below of = 0'0] {};
  \node[state] (1'1) [below of = 0'1] {};
  \node[state] (1'2) [below of = 0'2] {};

  \node[state] (2'0) [below of = 1'0] {};
  \node[state] (2'1) [below of = 1'1] {};
  \node[state] (2'2) [below of = 1'2] {};

  \node[state] (3'0) [below of = 2'0] {};
  \node[state] (3'1) [below of = 2'1] {};
  \node[state] (3'2) [below of = 2'2] {};

  \node[state] (4'0) [below of = 3'0] {};
  \node[state] (4'1) [below of = 3'1] {};
  \node[state] (4'2) [below of = 3'2] {};
  \node[] (L) [above right = 2mm  and 4mm of 0'0] {$\scriptstyle{L_1,L_2}$};
  \node[] (S') [above right = 0mm and 9mm of 4'2] {$\Ati_{\!1} \times \Ati_{\!2}$};

    \path[->]
        (0'0) edge[red]
            (1'1);

    \path[->]
        (1'1) edge[red]
            node[pos=0.9,left] {$\vect{a},\!\vect{d}$}
            (2'0);

    \path[->]
        (2'0) edge[red]
            (0'1);

    \path[->]
        (0'1) edge[red]
            node[pos=0.8,right=0.05] {$\vect{0},\!\vect{d}$}
            (1'0);

    \path[->]
        (1'0) edge[red]
            node[pos=0.9,right=0.1] {$\vect{a},\!\vect{0}$}
            (2'1);

    \path[->]
        (2'1) edge[red]
            node[pos=0.85,right] {$\vect{0},\!\vect{d}$}
            (0'0);

    \path[->]
        (0'0) edge[blue]
            node[pos=0.92,left=0.03] {$\vect{b},\!\vect{0}$}
            (2'2);

    \path[->]
        (2'2) edge[red,bend right = 75]
            (0'2);

    \path[->]
        (0'2) edge[red]
            (1'2);

    \path[->]
        (1'2) edge[red]
            node[pos=0.75,right=-0.05] {$\vect{a},\!\vect{0}$}
            (2'2);
    \path[->]
        (4'2) edge[red]
            (3'2);

    \path[->]
        (4'1) edge[red]
            node[pos=0.9,above left = -0.1] {$\vect{0},\!\vect{d}$}
            (3'0);

    \path[->]
        (4'0) edge[red]
            (3'1);

    \path[->]
        (2'0) edge[blue]
            (3'2);

    \path[->]
        (3'0) edge[blue]
            node[pos=0.78,above right=-0.1] {$\vect{c},\!\vect{0}$}
            (4'2);
\end{tikzpicture}
\caption{Product $\Eti_{\!1} \times \Eti_{\!2}$ of the Stalings automata $\Eti_{\!1},\Eti_{\!2}$ for $\HH_1$ and $\HH_2$}
\label{fig: enriched product example}
\end{figure}

Note that the product $\Ati_{\!1} \times \Ati_{\!2}$ is disconnected and has a hanging tree not containing the basepoint. After removal, we obtain the core of the doubly-enriched product which can be normalized as follows (the arcs outside the chosen spanning tree are drawn with thicker lines):

\begin{figure}[H]
\centering
\begin{tikzpicture}[shorten >=1pt, node distance=1.2cm and 2cm, on grid,auto,>=stealth']
  \node[state,accepting] (0'0) {};
  \node[state] (1'1) [below left of = 0'0] {};
  \node[state] (2'0) [left of = 1'1] {};
  \node[state] (0'1) [above left  of = 2'0] {};
  \node[state] (1'0) [above right of = 0'1] {};
  \node[state] (2'1) [right of = 1'0] {};

  \node[state] (2'2) [right = 1.5 of 0'0] {};
  \node[state] (0'2) [below right of = 2'2] {};
  \node[state] (1'2) [above right of = 2'2] {};
  \node[] (bp) [above right = 0.3 and 0.3 of 0'0] {$\scriptstyle{L_1,L_2}$};


    \path[->]
        (1'1) edge[red]
            (0'0);

    \path[->]
        (1'0) edge[red]
            (0'1);

    \path[->]
        (2'1) edge[red]
            (1'0);

    \path[->]
        (2'0) edge[red]
            (1'1);

    \path[->]
        (0'1) edge[thick,red]
             node[pos=0.75,below left=-0.1] {$2\vect{a},\!3\vect{d}$}
            (2'0);

    \path[->]
         (0'0) edge[red]
            (2'1);

    \path[->]
        (0'0) edge[blue]
            (2'2);

    \path[->]
        (0'2) edge[thick,red]
             node[pos=0.8,right] {$\vect{a},\!\vect{0}$}
            (1'2);

    \path[->]
        (1'2) edge[red]
            (2'2);

    \path[->]
        (2'2) edge[red]
            (0'2);
\end{tikzpicture}
\caption{Normalized product for $\HH_1 \cap \HH_2$}
\label{fig: normalized intersection scheme example}
\end{figure}

\begin{rem} \label{rem: neglected parts of product}
The basis element $y^3 x y^{-2}\, \fta{}{c} \in \HH_1$ does not contribute to the core of the product $\Eti_{\!1} \times \Eti_{\!2}$.
In a similar vein,
the abelian labels $\vect{b},\vect{c}$
no longer appear in the normalized product core (\Cref{fig: normalized intersection scheme example})
and will not play any role in the intersection $\HH_1 \cap \HH_2$.
\end{rem}
So,
we obtain a basis
$\set{ w_1,w_2}$  for $\HH_1 \prF \cap \HH_2 \prF $, where $w_1 = x^6$ and $w_2 = y x^3 y^{-1}$.
Let us now study the intersection $\HH_1 \cap \HH_2$ in light of
\Cref{thm: Stallings = Cayley,thm: enriched Stallings computable}.
According to the notation summarized in \Cref{fig: intersection diagram big}, we have
$\matr{A_1} =
\left(
  \begin{smallmatrix}
  \vect{a}  \phantom{\vect{d}} \hspace{-5pt}\\
  \vect{b}\\
  \vect{c}
  \end{smallmatrix}
  \right)
$,
$\matr{A_2} =
\left(
  \begin{smallmatrix}
  \vect{d} \\
  \vect{0}\\
  \end{smallmatrix}
  \right)
$,
$\matr{B_1} =
\left(
  \begin{smallmatrix}
  2&0 & 0\\ 1&0&0\\
  \end{smallmatrix}
  \right)
$,
$\matr{B_2} =
\left(
  \begin{smallmatrix}
  3&0\\ 0&3\\
  \end{smallmatrix}
  \right)
$,
and
$\matr{D} =
\left(
  \begin{smallmatrix}
  2\vect{a}  - 3 \vect{d}\\ \vect{a}\\
  \end{smallmatrix}
  \right)
$.
We shall distinguish different cases depending on the values of the parameters $\vect{a},\vect{d} \in \ZZ^2$, and the subgroups~$L_1,L_2\,\leqslant \ZZ^2 $.

\goodbreak

\paragraph{\large{Case 1.}}
Let $\vect{a} = (1,0),\,\vect{d}=(0,1) \in \ZZ^2$, and $L_{1} = \gen{(0,6)}$, $L_{2}=\gen{(3,-3)} \leqslant \ZZ^2$.

Then, $L_1 + L_2 = \gen{(0,6),(3,-3)}$, $L_{1} \cap L_{2} = \set{(0,0)}$,
 and
 $
  \matr{D} =
  \left(
  \begin{smallmatrix}
  2 & -3\\1&\hfill 0
     \end{smallmatrix}
     \right)
$.
Hence, the subgroup ${M = (L_1 + L_2) \matr{D}\preim}$ is
      generated by the rows of the matrix~$
      {\matr{M} =
      \left(
      \begin{smallmatrix*}[r]
      -2 & 4\\1& 1
      \end{smallmatrix*}
      \right)}
      $
      which,
      in turn,
      admits the Smith normal form decomposition
      $\matr{P} \matr{M} \matr{Q} = \matr{S}$,
 where $\matr{P} =
  \left(
  \begin{smallmatrix*}[r]
    0 & 1 \\
    1 & 2
  \end{smallmatrix*}
  \right)
  $,
  $\matr{Q} =
  \left(
  \begin{smallmatrix*}[r]
    1 & -1 \\
    0 & 1
  \end{smallmatrix*}
  \right)
  $,
  and
  $\matr{S} =
  \left(
  \begin{smallmatrix*}[r]
    1 & 0 \\
    0 & 6
  \end{smallmatrix*}
  \right)
  $.
  Therefore, according to~\Cref{thm: Stallings = Cayley}, we obtain: %
  \begin{equation*}
  \Stallings{(\HH_1 \cap \HH_2) \prF, \set{w_1,w_2}} \isom
  \Cayley{\ZZ / \ZZ \oplus \ZZ/6\ZZ,\set{(1,-1),(0,1)}} \isom
  \Cayley{\ZZ/6\ZZ \,,\, \set{-1,1}} .
  \end{equation*}
Denoting by a violet (\resp green) arc the action of the element $-1$ (\resp $1$), we obtain:
\begin{figure}[H]
\centering
\begin{tikzpicture}[shorten >=1pt, node distance=1.2 and 1.5, on grid,auto,>=stealth']
\newcommand{\aaa}{6}
\node[regular polygon, regular polygon sides=\aaa, minimum size=2.5cm] at (\aaa*4,0) (A) {};

\foreach \i in {1,...,5}
    \node[state] (\i) at (A.corner \i) {};
\node[state,accepting] (6) at (A.corner 6) {};

  \path[->]
        (1) edge[bend right,violet]
            (2);

  \path[->]
        (2) edge[bend right,violet]
            (3);

  \path[->]
        (3) edge[bend right,violet]
            (4);

  \path[->]
        (4) edge[bend right,violet]
            (5);

  \path[->]
        (5) edge[bend right,violet]
            (6);

  \path[->]
        (6) edge[bend right,violet]
            node[above right = -0.1] {\scriptsize{$w_1$}}
            (1);

  \path[->]
        (2) edge[bend right,Green]
            (1);

  \path[->]
        (3) edge[bend right,Green]
            (2);

  \path[->]
        (4) edge[bend right,Green]
            (3);

  \path[->]
        (5) edge[bend right,Green]
            (4);

  \path[->]
        (6) edge[bend right,Green]
            (5);

  \path[->]
        (1) edge[bend right,Green]
            node[below left = -0.1] {\scriptsize{$w_2$}}
            (6);

\end{tikzpicture}
\caption{Stallings automaton corresponding to $\Cayley{\ZZ/6\ZZ \,,\, \set{-1,1}}$}
\label{fig: St intersection prf example1}
\end{figure}
Since $\ZZ / 6 \ZZ$ is finite, the intersection $\HH_1 \cap \HH_2$ is finitely generated.
Finally, we apply \Cref{thm: enriched Stallings computable} to compute a Stallings automaton.
After replacing the arcs reading
$w_1$ (\resp $w_2$) with
an enriched path reading
$x^6 \, \T^{(2,0),(0,3)}$
(\resp
$
y x^3 y^{-1}\, \T^{(1,0),(0,0)}$),
folding,
and normalizing \wrt a spanning tree $\Treei$
(whose cyclomatic arcs are drawn thicker),
the automaton in \Cref{fig: St intersection prf example1}
becomes:
\begin{figure}[H]
\centering
\begin{tikzpicture}[shorten >=0.5pt, node distance=1.2 and 1.5, on grid,auto,>=stealth']
\def \n {36}
\def \m {18}
\def \o {6}
\def \R {2.5cm}
\def \r {1.5cm}

\def \margin {2} 

\node[state,accepting] (0) at (2.5,0) {};
\node [above right = 0.15 and 0.5 of 0] {$\scriptstyle{L_1,L_2}$};

\foreach \s in {0,...,35}
{
  \node[state, inner sep=0pt, minimum size=3pt] at ({360/\n * (\s)}:\R) {};
}

\foreach \s in {0,...,35}
{
  \path[->]
        ({360/\n * (\s)+\margin*.8}:\R)
        edge[red,thin]
        ({360/\n * (\s+1)-\margin*.7}:\R);
}

\foreach \s in {0,...,17}
{
  \node[state, inner sep=0pt, minimum size=3pt] at ({360/\m * (\s - 1)}:\r) {};
}

\foreach \s in {0,...,17}
{
  \path[->]
        ({360/\m * (\s+1)-\margin*1.25}:\r)
        edge[red,thin]
        ({360/\m * (\s)+\margin}:\r);
}

\foreach \s in {1,...,\o}
{
  \path[->]
        ({360/\o * (\s - 1)}:\R-\margin*1.5)
        edge[blue,thin]
        ({360/\o * (\s - 1)}:\r+\margin);
}

 \path[->]
        ({360/\n * 5+\margin*.8}:\R)
        edge[red,line width=0.8pt]
      node[pos=0.7,above right = -0.1] {$\scriptscriptstyle{(-3,0),(0,3)}$}
        ({360/\n * 6-\margin*.7}:\R);

 \path[->]
        ({360/\n * 11+\margin*.8}:\R)
        edge[red,line width=0.8pt]
      node[pos=0.8,above left=-0.1] {$\scriptscriptstyle{(3,0),(0,3)}$}
        ({360/\n * 12-\margin*.7}:\R);

 \path[->]
        ({360/\n * 17 +\margin*.8}:\R)
        edge[red,line width=0.8pt]
      node[pos=0.7,left] {$\scriptscriptstyle{(3,0),(0,3)}$}
        ({360/\n * 18-\margin*.7}:\R);

 \path[->]
        ({360/\n * 23+\margin*.8}:\R)
        edge[red,line width=0.8pt]
      node[pos=0.8,below left=-0.1] {$\scriptscriptstyle{(3,0),(0,3)}$}
        ({360/\n * 24-\margin*.7}:\R);

 \path[->]
        ({360/\n * 29+\margin*.8}:\R)
        edge[red,line width=0.8pt]
      node[pos=0.7,below right=-0.1] {$\scriptscriptstyle{(3,0),(0,3)}$}
        ({360/\n * 30-\margin*.7}:\R);

 \path[->]
        ({360/\n * 35+\margin*.8}:\R)
        edge[red,line width=0.8pt]
      node[pos=0.7,right] {$\scriptscriptstyle{(3,0),(0,3)}$}
        ({360/\n * 36-\margin*.7}:\R);

  \path[->]
        ({360/\m * 1 - \margin*1.25}:\r)
        edge[red,line width=0.8pt]
        ({360/\m * 0 +\margin}:\r);

    \path[->]
        ({360/\m * 4 - \margin*1.25}:\r)
        edge[red,thin]
        ({360/\m * 3 +\margin}:\r);

    \path[->]
        ({360/\m * 7 - \margin*1.25}:\r)
        edge[red,thin]
        ({360/\m * 6 +\margin}:\r);

    \path[->]
        ({360/\m * 10 - \margin*1.25}:\r)
        edge[red,thin]
        ({360/\m * 9 +\margin}:\r);

    \path[->]
        ({360/\m * 13 - \margin*1.25}:\r)
        edge[red,thin]
        ({360/\m * 12 +\margin}:\r);

    \path[->]
        ({360/\m * 16 - \margin*1.25}:\r)
        edge[red,thin]
        ({360/\m * 15 +\margin}:\r);


\path[->]
        ({360/\m -\margin*1.1}:\r)
        edge[red]
        node[pos=0.8, left] {$\scriptscriptstyle{(6,0),(0,0)}$}
        ({\margin*.8}:\r );
\end{tikzpicture}
\caption{Normalized expanded product for $\HH_1 \cap \HH_2$}
\label{fig: normalized expanded intersection scheme example}
\end{figure}
We know by construction (see~\Cref{thm: Stallings = Cayley}) that
the automaton in \Cref{fig: normalized expanded intersection scheme example} must be equalizable;
that is,
the doubly enriched label $(\vect{a},\vect{b})$ of any $\Treei$-arc satisfies
$(\vect{a} + L_1) \cap (\vect{b} + L_2) \neq \varnothing$. After replacing each label $(\vect{a},\vect{b})$ with some $\vect{c} \in (\vect{a} + L_1) \cap (\vect{b} + L_2)$,
and replacing $(L_1,L_2)$ with $L_1 \cap L_2 = \set{(\vect{0},\vect{0})}$ as the subgroup basepoint,
we obtain a Stallings automaton for the intersection $\HH_1 \cap \HH_2$:
\begin{figure}[H]
\centering
\begin{tikzpicture}[shorten >=0.5pt, node distance=1.2 and 1.5, on grid,auto,>=stealth']
\def \n {36}
\def \m {18}
\def \o {6}
\def \R {2.5cm}
\def \r {1.5cm}

\def \margin {2} 

\node[state,accepting] (0) at (2.5,0) {};


\foreach \s in {0,...,35}
{
  \node[state, inner sep=0pt, minimum size=3pt] at ({360/\n * (\s)}:\R) {};
}

\foreach \s in {0,...,35}
{
  \path[->]
        ({360/\n * (\s)+\margin*.8}:\R)
        edge[red,thin]
        ({360/\n * (\s+1)-\margin*.7}:\R);
}

\foreach \s in {0,...,17}
{
  \node[state, inner sep=0pt, minimum size=3pt] at ({360/\m * (\s - 1)}:\r) {};
}

\foreach \s in {0,...,17}
{
  \path[->]
        ({360/\m * (\s+1)-\margin*1.25}:\r)
        edge[red,thin]
        ({360/\m * (\s)+\margin}:\r);
}

\foreach \s in {1,...,\o}
{
  \path[->]
        ({360/\o * (\s - 1)}:\R-\margin*1.5)
        edge[blue,thin]
        ({360/\o * (\s - 1)}:\r+\margin);
}

 \path[->]
        ({360/\n * 5+\margin*.8}:\R)
        edge[red,line width=0.8pt]
      node[pos=0.7,above right = -0.1] {$\scriptscriptstyle{(-3,6)}$}
        ({360/\n * 6-\margin*.7}:\R);

 \path[->]
        ({360/\n * 11+\margin*.8}:\R)
        edge[red,line width=0.8pt]
      node[pos=0.8,above left=-0.1] {$\scriptscriptstyle{(3,0)}$}
        ({360/\n * 12-\margin*.7}:\R);

 \path[->]
        ({360/\n * 17 +\margin*.8}:\R)
        edge[red,line width=0.8pt]
      node[pos=0.7,left] {$\scriptscriptstyle{(3,0)}$}
        ({360/\n * 18-\margin*.7}:\R);

 \path[->]
        ({360/\n * 23+\margin*.8}:\R)
        edge[red,line width=0.8pt]
      node[pos=0.8,below left=-0.1] {$\scriptscriptstyle{(3,0)}$}
        ({360/\n * 24-\margin*.7}:\R);

 \path[->]
        ({360/\n * 29+\margin*.8}:\R)
        edge[red,line width=0.8pt]
      node[pos=0.7,below right=-0.1] {$\scriptscriptstyle{(3,0)}$}
        ({360/\n * 30-\margin*.7}:\R);

 \path[->]
        ({360/\n * 35+\margin*.8}:\R)
        edge[red,line width=0.8pt]
      node[pos=0.7,right] {$\scriptscriptstyle{(3,0)}$}
        ({360/\n * 36-\margin*.7}:\R);

  \path[->]
        ({360/\m * 1 - \margin*1.25}:\r)
        edge[red,line width=0.8pt]
        ({360/\m * 0 +\margin}:\r);

    \path[->]
        ({360/\m * 4 - \margin*1.25}:\r)
        edge[red,thin]
        ({360/\m * 3 +\margin}:\r);

    \path[->]
        ({360/\m * 7 - \margin*1.25}:\r)
        edge[red,thin]
        ({360/\m * 6 +\margin}:\r);

    \path[->]
        ({360/\m * 10 - \margin*1.25}:\r)
        edge[red,thin]
        ({360/\m * 9 +\margin}:\r);

    \path[->]
        ({360/\m * 13 - \margin*1.25}:\r)
        edge[red,thin]
        ({360/\m * 12 +\margin}:\r);

    \path[->]
        ({360/\m * 16 - \margin*1.25}:\r)
        edge[red,thin]
        ({360/\m * 15 +\margin}:\r);


\path[->]
        ({360/\m -\margin*1.1}:\r)
        edge[red]
        node[pos=0.8, left] {$\scriptscriptstyle{(6,-6)}$}
        ({\margin*.8}:\r );
\end{tikzpicture}
\caption{Stallings automaton for $\HH_1 \cap \HH_2$ (Case 1)}
\label{fig: Stallings intersection example1}
\end{figure}
This provides the basis
$\set{\,
y  x^3  y^{-1}  x^{6}  \T^{(3,0)}, \allowbreak
y  x^6  y^{-1}  x^{6}  y  x^{-3}  y^{-1} \, \T^{(3,0)}, \allowbreak
y  x^9  y^{-1}  x^{6}  y  x^{-6}  y^{-1} \, \T^{(3,0)}, \allowbreak
y  x^{12}  y^{-1}  x^{6}  y  x^{-9}  y^{-1} \, \T^{(3,0)}, \allowbreak
y  x^{15}  y^{-1}  x^{6}  y  x^{-12}  y^{-1} \, \T^{(3,0)}, \allowbreak
y  x^{18}  y^{-1} \, \T^{(6,-6)}, \allowbreak
x^{6}  y  x^{-15}  y^{-1} \, \T^{(-3,6) \,}
}$
for the intersection $\HH_1 \cap \HH_2$.

\paragraph{\large{Case 2.}}

Let $\vect{a} = (3,3),\,\vect{d}=(2,2) \in \ZZ^2$, and $L_{1} = \gen{(1,2)},\,L_{2}=\gen{(0,0)} \leqslant \ZZ^2$.

Then, $L_1 + L_2 = \gen{(1,2)}$, $L_{1} \cap L_{2} = \set{(0,0)}$,
      $
      \matr{D} =
      \left(
      \begin{smallmatrix}
      0 & 0\\3&\hfill 3
      \end{smallmatrix}
      \right)
      $,
      and $M$ is
      generated by the row of the matrix~$
      {\matr{M} =
      \left( \,
        1 \ \, 0
        \, \right)}
      $,
      which is already in Smith normal form;
 hence, 
 $\matr{P} = (1), \ \matr{Q} =
  \left(
  \begin{smallmatrix*}[r]
    1 & 0 \\
    0 & 1
  \end{smallmatrix*}
  \right)$,
  and
  $ \matr{S} = (\, 1 \ 0\,)
  $.
  According to~\Cref{thm: Stallings = Cayley},
  $\Stallings{(\HH_1 \cap \HH_2) \prF, \set{w_1,w_2}} \isom
  \Cayley{\ZZ / \ZZ \oplus \ZZ / 0\ZZ, \set{(1,0),(0,1)}} \isom
  \Cayley{\ZZ \,, \set{0,1}}
  $,
  which takes the form:

\begin{figure}[H]
\centering
  \begin{tikzpicture}[shorten >=1pt, node distance=1.2 and 1.6, on grid,auto,>=stealth']
  \node[state,accepting] (0) {};
  \node[state] (1) [right = of 0]{};
  \node[state] (2) [right = of 1]{};
  \node[state] (3) [right = of 2]{};
  \node[] (dr) [right = .5 of 3]{$\cdots$};

  \node[state] (-1) [left = of 0]{};
  \node[state] (-2) [left = of -1]{};
  \node[state] (-3) [left = of -2]{};
  \node[] (dl) [left = .5 of -3]{$\cdots$};

    \path[->]
        (0) edge[violet,loop,min distance=15mm, out=135,in=45]
            node[above=0.01] {$w_1$}
            (0);

    \path[->]
        (1) edge[violet,loop,min distance=15mm, out=135,in=45]
            (1);

    \path[->]
        (2) edge[violet,loop,min distance=15mm, out=135,in=45]
            (2);

    \path[->]
        (3) edge[violet,loop,min distance=15mm, out=135,in=45]
            (3);

    \path[->]
        (-1) edge[violet,loop,min distance=15mm, out=135,in=45]
            (-1);

    \path[->]
        (-2) edge[violet,loop,min distance=15mm, out=135,in=45]
            (-2);

    \path[->]
        (-3) edge[violet,loop,min distance=15mm, out=135,in=45]
            (-3);

  \path[->]
        (0) edge[Green]
            node[pos=0.5,above] {$w_2$}
            (1);

    \path[->]
        (1) edge[Green]
            (2);

    \path[->]
        (2) edge[Green]
            (3);


    \path[->]
        (-1) edge[Green]
            (0);

    \path[->]
        (-2) edge[Green]
            (-1);

    \path[->]
        (-3) edge[Green]
            (-2);

\end{tikzpicture}
 \caption{\protect{Stallings automaton corresponding to $\Cayley{\ZZ ,\set{0,1}}$}}
 \end{figure}
Since $\ZZ$ is infinite, in Case 2 the intersection $\HH_1 \cap \HH_2$ has infinite rank.
After replacing the arcs reading
$w_1$ and $w_2$ with the enriched paths reading
$
x^6 \, \T^{(6,6),(6,6)}$ and
$
y x^3 y^{-1}\, \T^{(3,3),(0,0)}$,
folding, normalizing 
(\wrt the spanning tree having as cyclomatic arcs the thicker ones), and equalizing, we obtain a Stallings automaton for $\HH_1 \cap \HH_2$:
 \begin{figure}[H]
\centering
  \begin{tikzpicture}[shorten >=1pt, node distance=0.6 and 0.7, on grid,auto,>=stealth']
  \node[state,accepting] (0) {};
  \node[smallstate] (01) [below = of 0] {};
  \node[smallstate] (02) [right = of 01] {};
  \node[smallstate] (03) [right = of 02] {};
  \node[smallstate] (04) [right = of 03] {};

  \node[smallstate] (h02) [above left =0.6 of 0] {};
  \node[smallstate] (h03) [above = of h02] {};
  \node[smallstate] (h04) [above right = 0.6 of h03] {};
  \node[smallstate] (h05) [below right = 0.6 of h04] {};
  \node[smallstate] (h06) [below = of h05] {};

  \node[smallstate] (1) [above = of 04]{};
  \node[smallstate] (12) [right = of 04] {};
  \node[smallstate] (13) [right = of 12] {};
  \node[smallstate] (14) [right = of 13] {};
  \node[smallstate] (h12) [above left =0.6 of 1] {};
  \node[smallstate] (h13) [above = of h12] {};
  \node[smallstate] (h14) [above right = 0.6 of h13] {};
  \node[smallstate] (h15) [below right = 0.6 of h14] {};
  \node[smallstate] (h16) [below = of h15] {};

  \node[smallstate] (2) [above =  of 14]{};
  \node[smallstate] (h22) [above left =0.6 of 2] {};
  \node[smallstate] (h23) [above = of h22] {};
  \node[smallstate] (h24) [above right = 0.6 of h23] {};
  \node[smallstate] (h25) [below right = 0.6 of h24] {};
  \node[smallstate] (h26) [below = of h25] {};

  \node[smallstate] (-11) [left = of 01] {};
  \node[smallstate] (-12) [left = of -11] {};
  \node[smallstate] (-13) [left = of -12] {};
  \node[smallstate] (-1) [above = of -13] {};
  \node[smallstate] (h-12) [above left =0.6 of -1] {};
  \node[smallstate] (h-13) [above = of h-12] {};
  \node[smallstate] (h-14) [above right = 0.6 of h-13] {};
  \node[smallstate] (h-15) [below right = 0.6 of h-14] {};
  \node[smallstate] (h-16) [below = of h-15] {};

  \node[smallstate] (-21) [left = of -13] {};
  \node[smallstate] (-22) [left = of -21] {};
  \node[smallstate] (-23) [left = of -22] {};
  \node[smallstate] (-2) [above = of -23] {};
  \node[smallstate] (h-22) [above left =0.6 of -2] {};
  \node[smallstate] (h-23) [above = of h-22] {};
  \node[smallstate] (h-24) [above right = 0.6 of h-23] {};
  \node[smallstate] (h-25) [below right = 0.6 of h-24] {};
  \node[smallstate] (h-26) [below = of h-25] {};

  \node[] (dr) [right = 1 of 14]{$\cdots$};
  \node[] (dl) [left = 1 of -23]{$\cdots$};


    \path[->]
        (0) edge[blue]
            (01);

    \path[->]
        (01) edge[red]
            (02);

    \path[->]
        (02) edge[red]
            (03);

    \path[->]
        (03) edge[red]
            (04);

    \path[->]
        (1) edge[blue]
            (04);

    \path[->]
        (0) edge[red]
            (h02);

    \path[->]
        (h02) edge[red]
            (h03);

    \path[->]
        (h03) edge[red]
            (h04);

    \path[->]
        (h04) edge[red,thick]
            node[pos=0.7,above right=-.5mm] {$\scriptstyle{(6,6)}$}
            (h05);

    \path[->]
        (h05) edge[red]
            (h06);

    \path[->]
        (h06) edge[red]
            (0);

    \path[->]
        (04) edge[red]
            (12);

    \path[->]
        (12) edge[red]
            (13);

    \path[->]
        (13) edge[red]
            (14);

    \path[->]
        (2) edge[blue]
            (14);

    \path[->]
        (1) edge[red]
            (h12);

    \path[->]
        (h12) edge[red]
            (h13);

    \path[->]
        (h13) edge[red]
            (h14);

    \path[->]
        (h14) edge[red,thick]
             node[pos=0.7,above right=-.5mm] {$\scriptstyle{(6,6)}$}
            (h15);

    \path[->]
        (h15) edge[red]
            (h16);

    \path[->]
        (h16) edge[red]
            (1);

    \path[->]
        (2) edge[red]
            (h22);

    \path[->]
        (h22) edge[red]
            (h23);

    \path[->]
        (h23) edge[red]
            (h24);

    \path[->]
        (h24) edge[red,thick]
             node[pos=0.7,above right=-.5mm] {$\scriptstyle{(6,6)}$}
            (h25);

    \path[->]
        (h25) edge[red]
            (h26);

    \path[->]
        (h26) edge[red]
            (2);

    \path[->]
        (-13) edge[red]
            (-12);

    \path[->]
        (-12) edge[red]
            (-11);

    \path[->]
        (-11) edge[red]
            (01);

    \path[->]
        (-1) edge[blue]
            (-13);

    \path[->]
        (-1) edge[red]
            (h-12);

    \path[->]
        (h-12) edge[red]
            (h-13);

    \path[->]
        (h-13) edge[red]
            (h-14);

    \path[->]
        (h-14) edge[red,thick]
             node[pos=0.7,above right=-.5mm] {$\scriptstyle{(6,6)}$}
            (h-15);

    \path[->]
        (h-15) edge[red]
            (h-16);

    \path[->]
        (h-16) edge[red]
            (-1);

    \path[->]
        (-23) edge[red]
            (-22);

    \path[->]
        (-22) edge[red]
            (-21);

    \path[->]
        (-21) edge[red]
            (-13);

    \path[->]
        (-2) edge[blue]
            (-23);

    \path[->]
        (-2) edge[red]
            (h-22);

    \path[->]
        (h-22) edge[red]
            (h-23);

    \path[->]
        (h-23) edge[red]
            (h-24);

    \path[->]
        (h-24) edge[red,thick]
             node[pos=0.7,above right=-.5mm] {$\scriptstyle{(6,6)}$}
            (h-25);

    \path[->]
        (h-25) edge[red]
            (h-26);

    \path[->]
        (h-26) edge[red]
            (-2);

    \path[->]
        (14) edge[red]
            (dr);

\path[->]
        (dl) edge[red]
            (-23);

\end{tikzpicture}
\caption{Stallings automaton for $\HH_1 \cap \HH_2 $ (Case 2)}
\label{fig: Stallings intersection example2}
\end{figure}
The corresponding (infinite) basis
for $\HH_1 \cap \HH_2 $
is
$\set{
y x^{3k} y^{-1} x^6 y  x^{-3k} y^{-1} \, \T^{(6,6)}
\st k \in \ZZ
}$.

\goodbreak

\paragraph{\large{Case 3.}}
Let $\vect{a} = (3,3),\,\vect{d}=(2,2) \in \ZZ^2$, and $L_1 = \gen{(2,2)},\,L_2=\gen{(0,0)} \leqslant \ZZ^2$.

In this case,
 $\matr{D} =
      \left(
      \begin{smallmatrix}
      0 & 0\\3&\hfill 3
      \end{smallmatrix}
      \right)$,
      ${\matr{M} =
      \left(
      \begin{smallmatrix*}[r]
      1 & 0\\0& 2
      \end{smallmatrix*}
      \right)}
      $,
      $
      \matr{P} = \matr{Q} =
      \left(
      \begin{smallmatrix*}[r]
      1 & 0\\0& 1
      \end{smallmatrix*}
      \right)
      $, and
      $\matr{S} =
      \left(
      \begin{smallmatrix*}[r]
      1 & 0\\0& 2
      \end{smallmatrix*}
      \right)
      $.
  Therefore,
  $\stallings{(\HH_1 \cap \HH_2) \prF, \set{w_1,w_2}} \isom
  \Cayley{\ZZ / \ZZ \oplus \ZZ / 2\ZZ, \set{(1,0),(0,1)}} \isom
  \Cayley{\ZZ/2\ZZ \,, \set{0,1}}$,
  which takes the form:
    \begin{figure}[H]
    \centering
      \begin{tikzpicture}[shorten >=1pt, node distance=1.2 and 1.6, on grid,auto,>=stealth']
      \node[state,accepting] (0) {};
      \node[state] (1) [right = of 0]{};

        \path[->]
            (0) edge[violet,loop,min distance=15mm, out=225,in=135]
                node[left=0.01] {$w_1$}
                (0);

        \path[->]
            (1) edge[violet,loop,min distance=15mm, out=45,in=-45]
                (1);

        \path[->]
            (0) edge[Green,bend left = 30]
                node[above=0.01] {$w_2$}
                (1);

        \path[->]
            (1) edge[Green,bend left = 30]
                (0);

    \end{tikzpicture}
    \vspace{-15pt}
    \caption{Stallings automaton corresponding to $\Cayley{\ZZ/2\ZZ,  \set{0,1}}$}
    \end{figure}
    After replacing the arcs reading
    $w_1$ and $w_2$ with the enriched paths reading
    $
    x^6 \, \T^{(6,6),(6,6)}$ and
     $
     y x^3 y^{-1}\, \T^{(3,3),(0,0)}$,
    folding, normalizing (\wrt the spanning tree having as cyclomatic arcs the thicker ones), and equalizing, we obtain the Stallings automaton:
\begin{figure}[H]
\centering
  \begin{tikzpicture}[shorten >=1pt, node distance=1 and 1, on grid,auto,>=stealth']
     \node[state,accepting] (0) {};

  \node[smallstate] (h02) [below left =0.9 of 0] {};
  \node[smallstate] (h03) [left = of h02] {};
  \node[smallstate] (h04) [above left = 0.9 of h03] {};
  \node[smallstate] (h05) [above right = 0.9 of h04] {};
  \node[smallstate] (h06) [right = of h05] {};

  \node[smallstate] (h11) [right = of 0] {};
  \node[smallstate] (h12) [above right =0.9 of h11] {};
  \node[smallstate] (h13) [right = of h12] {};
  \node[smallstate] (h14) [below right = 0.9 of h13] {};
  \node[smallstate] (h15) [below left = 0.9 of h14] {};
  \node[smallstate] (h16) [left = of h15] {};

  \node[smallstate] (2) [right = of h14] {};
  \node[smallstate] (h22) [above right =0.9 of 2] {};
  \node[smallstate] (h23) [right = of h22] {};
  \node[smallstate] (h24) [below right = 0.9 of h23] {};
  \node[smallstate] (h25) [below left = 0.9 of h24] {};
  \node[smallstate] (h26) [left = of h25] {};

  \path[->]
        (0) edge[blue]
            (h11);

  \path[->]
        (0) edge[red]
            (h02);

    \path[->]
        (h02) edge[red]
            (h03);

    \path[->]
        (h03) edge[red]
            (h04);

    \path[->]
        (h04) edge[red]
            (h05);

    \path[->]
        (h05) edge[red]
            (h06);

    \path[->]
        (h06) edge[red,thick]
            node[pos=0.7,above right =-.2mm] {$\scriptstyle{(6,6)}$}
            (0);

      \path[->]
        (h11) edge[red]
            (h12);

    \path[->]
        (h12) edge[red]
            (h13);

    \path[->]
        (h13) edge[red]
            (h14);

    \path[->]
        (h14) edge[red]
            (h15);

    \path[->]
        (h15) edge[red]
            (h16);

    \path[->]
        (h16) edge[red,thick]
            (h11);

  \path[->]
        (0) edge[red]
            (h02);

    \path[->]
        (2) edge[red]
            (h22);

    \path[->]
        (h22) edge[red]
            (h23);

    \path[->]
        (h23) edge[red]
            (h24);

    \path[->]
        (h24) edge[red]
            (h25);

    \path[->]
        (h25) edge[red]
            (h26);

    \path[->]
        (h26) edge[red,thick]
            node[pos=0.7,below left=-.2mm] {$\scriptstyle{(6,6)}$}
            (2);

  \path[->]
        (2) edge[blue]
            (h14);

\end{tikzpicture}
\caption{Stallings automaton for $\HH_1 \cap \HH_2 $ (Case 3)}
\label{fig: Stallings interesection example3}
\end{figure}
This provides the basis
$\set{
x^6   \T^{(6,6)},
y  x^6  y^{-1},
y  x^3  y^{-1}  x^6  y  x^{-3}  y^{-1} \T^{(6,6)}
}$
 for ${\HH_1 \cap \HH_2}$.

\paragraph{\large{Case 4.}}

Let $\vect{a} = (3,3),\,\vect{d}=(2,2) \in \ZZ^2$, and $L_1 = \gen{(1,1)},\,L_2=\gen{(0,0)} \leqslant \ZZ^2$.

In this case, 
$\matr{D} =
      \left(
      \begin{smallmatrix}
      0 & 0\\3&\hfill 3
      \end{smallmatrix}
      \right)$,
      and
      ${\matr{M} = \matr{P} = \matr{Q} = \matr{S} =
      \left(
      \begin{smallmatrix*}[r]
      1 & 0\\0& 1
      \end{smallmatrix*}
      \right)}
      $.
  Therefore,
  \[
  \stallings{(\HH_1 \cap \HH_2) \prF, \set{w_1,w_2}} \,\isom\,
\Cayley{\ZZ / \ZZ \oplus \ZZ/\ZZ \,,\, \set{(1,0),(0,1)}} \,\isom\,
  \cayley{\set{0} \,,\, \mset{0,0}},
  \]
which takes the form:
  \vspace{-25pt}
\begin{figure}[H]
\centering
  \begin{tikzpicture}[shorten >=1pt, node distance=1.2 and 1.6, on grid,auto,>=stealth']
  \node[state,accepting] (0) {};

  \path[->]
        (0) edge[Green,loop,min distance=19mm, in=225,out=135]
            node[left=0.01] {$w_1$}
            (0);

    \path[->]
        (0) edge[violet,loop,min distance=19mm, in=45,out=-45]
            node[right=0.01] {$w_2$}
            (0);
\end{tikzpicture}
\vspace{-30pt}
\caption{Stallings automaton corresponding to $\Cayley{\set{0} \,,\, \mset{0,0}}$}
\end{figure}

Recall that we are using Cayley \emph{multi}digraphs. Hence, esoteric objects like $\Cayley{\set{0} \,,\, \mset{0,0}}$ (the Cayley multidigraph of the trivial group \wrt the trivial generator \emph{considered twice}) may appear from our construction.

 After replacing the arcs reading
  $w_1$ and $w_2$ with the enriched paths reading
  $
  x^6 \, \T^{(6,6),(6,6)}$ and
  $
  y x^3 y^{-1}\, \T^{(3,3),(0,0)}$,
folding, normalizing
(\wrt the spanning tree having as cyclomatic arcs the thicker ones), and equalizing, we obtain the Stallings automaton:
\begin{figure}[H]
\centering
\begin{tikzpicture}[shorten >=1pt, node distance=1cm and 1cm, on grid,auto,>=stealth']
  \node[state,accepting] (0'0) {};
  \node[state] (1'1) [below left of = 0'0] {};
  \node[state] (2'0) [left of = 1'1] {};
  \node[state] (0'1) [above left  of = 2'0] {};
  \node[state] (1'0) [above right of = 0'1] {};
  \node[state] (2'1) [right of = 1'0] {};

  \node[state] (2'2) [right = of 0'0] {};
  \node[state] (0'2) [below right of = 2'2] {};
  \node[state] (1'2) [above right of = 2'2] {};


    \path[->]
        (1'1) edge[red,thick]
            node[pos=0.7,below right = -0.1] {$\scriptstyle{(6,6)}$}
            (0'0);

    \path[->]
        (1'0) edge[red]
            (0'1);

    \path[->]
        (2'1) edge[red]
            (1'0);

    \path[->]
        (2'0) edge[red]
            (1'1);

    \path[->]
        (0'1) edge[red]
            (2'0);

    \path[->]
         (0'0) edge[red]
            (2'1);

    \path[->]
        (0'0) edge[blue]
            (2'2);

    \path[->]
        (0'2) edge[red]
            (1'2);

    \path[->]
        (1'2) edge[red,thick]
            (2'2);

    \path[->]
        (2'2) edge[red]
            (0'2);
\vspace{5pt}
\end{tikzpicture}
\caption{Stallings automaton for $\HH_1 \cap \HH_2$ (Case 4)}
\label{fig: Stallings intersection example4}
\end{figure}
This provides the basis
$\set{ \,
x^6 \, \T^{(6,6)},
y  x^3  y^{-1}
\,}$
 for $\HH_1 \cap \HH_2$.

\begin{rem}
Comparing the Cases 2, 3 and 4, we see that a slight change in one of the abelian parts can seriously affect the behavior of the intersection.
\end{rem}

\goodbreak

\paragraph{\large{Case 5.}} \label{case: 5}

Let $\vect{a} = (6,6),\,\vect{d}=(4,4)$, and $L_1 = \gen{(6p,6p)}$ ($0 \neq p\in\ZZ$), ${L_2=\gen{(0,0)} \leqslant \ZZ^2}$.

 In this case,
 $\matr{D} 
      =
      \left(
      \begin{smallmatrix}
      0 & 0\\6&\hfill 6
      \end{smallmatrix}
      \right)$,
      $\matr{M} =
      \left(
      \begin{smallmatrix*}[r]
      1 & 0\\0& p
      \end{smallmatrix*}
      \right)
      $,
 $\matr{P} =
  \matr{Q} =
  \left(
  \begin{smallmatrix*}[r]
    1 & 0 \\
    0 & 1
  \end{smallmatrix*}
  \right)
  $,
  and
  $\matr{S} =
  \left(
  \begin{smallmatrix*}[r]
    1 & 0 \\
    0 & p
  \end{smallmatrix*}
  \right)
  $.
Therefore,
  $
  \Stallings{(\HH_1 \cap \HH_2) \prF, \set{w_1,w_2}}
  \isom
  \Cayley{\ZZ / \ZZ \oplus \ZZ / p\ZZ, \set{(1,0),(0,1)}} \isom
  \Cayley{\ZZ/p\ZZ \,,\, \set{0,1} }
  $:
\begin{figure}[H]
\centering
\begin{tikzpicture}[shorten >=0.5pt, node distance=1 and 1.1, on grid,auto,>=stealth']
\def \p {8}
\def \m {18}
\def \o {6}
\def \R {1.3cm}
\def \r {1.5cm}

\pgfmathsetmacro{\pp}{int(\p-1)}

\def \margin {2} 

\node[state,accepting] (0) at (1.3,0) {};

\foreach \s in {1,...,\pp}
{
  \node[state] (\s) at ({360/\p * (\s)}:\R) {};

}

\foreach \s in {1,...,\pp}
{
  \pgfmathsetmacro{\ss}{int(\s-1)}
  \path[->]
        (\ss) edge[Green]
            (\s);
}

\path[->]
        (\pp) edge[gray,dashed]
            node[below right = -0.1 and 0 ,black] {$p$ \text{ vertices}}
            (0);

\foreach \s in {1,...,\pp}
{
  \path[->]
        (\s) edge[violet,loop,min distance=9mm, in=\s*360/\p+45,out=\s*360/\p-45]
            (\s);
}

  \path[->]
        (0) edge[violet,loop,min distance=11mm, in=45,out=-45]
            (0);
\end{tikzpicture}
\vspace{-7pt}
\caption{Stallings automaton corresponding to $\cayley{\ZZ/p\ZZ,\set{0,1}}$}
\label{fig: }
\end{figure}
 After replacing the arcs reading
  $w_1$ and $w_2$ with the appropriate enriched paths,
folding, normalizing
(\wrt the spanning tree having as cyclomatic arcs the thicker ones), and equalizing,
we obtain the Stallings automaton:
\begin{figure}[H]
\centering
\begin{tikzpicture}[shorten >=0.5pt, node distance=1 and 1.2, on grid,auto,>=stealth']
\def \p {8}
\def \m {18}
\def \o {6}
\def \R {2cm}
\def \r {2.5cm}
\def \rr {2.8cm}
\def \rrr {3.2cm}
\def \rrrr {3.4cm}

\pgfmathsetmacro{\pp}{int(\p-1)}
\pgfmathsetmacro{\ppp}{int(\p-2)}

\def \margin {2} 

\node[state,accepting] (0) at (45:\r) {};

\foreach \s in {0,...,\pp}
{
  \node[smallstate] (\s) at ({360/\p * (\s)}:\R) {};
  \node[smallstate] (h\s1) at ({360/\p * (\s)}:\r) {};
  \node[smallstate] (h\s2) at ({360/\p * (\s) - 8} :\rr) {};
  \node[smallstate] (h\s6) at ({360/\p * (\s) + 8} :\rr) {};
  \node[smallstate] (h\s3) at ({360/\p * (\s) - 7} :\rrr) {};
  \node[smallstate] (h\s5) at ({360/\p * (\s) + 7} :\rrr) {};
  \node[smallstate] (h\s4) at ({360/\p * (\s)} :\rrrr) {};

}

\foreach \s in {0,...,\ppp}
{

  \foreach \k in {1,...,3}{
    \node[smallstate] (\s\k) at ({360/\p * (\s) + 120/\p * \k}:\R) {};
  }
}

\foreach \s in {1,...,\pp}
{
  \pgfmathsetmacro{\ss}{int(\s-1)}
  \path[->]
        (\ss) edge[red]
            (\ss1);

    \path[->]
        (\ss1) edge[red]
            (\ss2);

    \path[->]
        (\ss2) edge[red]
            (\ss3);
}

\path[->]
        (\pp) edge[gray,dashed]
            node[below right] {($p$ \text{times})}
            (0);

\foreach \s in {0,...,\pp}
{
    \path[->]
        (h\s1) edge[blue]
            (\s);

    \path[->]
        (h\s1) edge[red]
            (h\s2);

    \path[->]
        (h\s2) edge[red]
            (h\s3);

    \path[->]
        (h\s3) edge[red]
            (h\s4);

    \path[->]
        (h\s4) edge[red,thick]
            (h\s5);

    \path[->]
        (h\s5) edge[red]
            (h\s6);

    \path[->]
        (h\s6) edge[red]
            (h\s1);
}

%
%
%

\node[red] (u1) at ({360/\p * 0 + 5}:3.7){$\scriptscriptstyle{(12,12)}$};
\node[red] (u2) at ({360/\p * 1 + 3}:3.55){$\scriptscriptstyle{(12,12)}$};
\node[red] (u3) at ({360/\p * 2 + 11}:3.5){$\scriptscriptstyle{(12,12)}$};
\node[red] (u4) at ({360/\p * 3 + 8}:3.65){$\scriptscriptstyle{(12,12)}$};
\node[red] (u5) at ({360/\p * 4 + 5}:3.7){$\scriptscriptstyle{(12,12)}$};
\node[red] (u6) at ({360/\p * 5 + 4}:3.55){$\scriptscriptstyle{(12,12)}$};
\node[red] (u7) at ({360/\p * 6 + 11}:3.5){$\scriptscriptstyle{(12,12)}$};
\node[red] (u8) at ({360/\p * 7 + 8}:3.65){$\scriptscriptstyle{(12,12)}$};

\path[->]
        (02) edge[red,thick]
            (03);

\end{tikzpicture}
\caption{Stallings automaton for $\HH_1 \cap \HH_2$ (Case 5)}
\label{fig: Stallings intersection example6}
\end{figure}
This provides the basis
$
\set{ \,
y x^{3p} y^{-1}
}
\cup
\set{
y x^{3k} y^{-1} x^6 y x^{-3k} y^{-1}\, \T^{(12,12)}
\st
k \in [0,p-1]
\,}
$
for ${\HH_1 \cap \HH_2}$.

\goodbreak

\begin{rem}
Case 5 above points out the following interesting consequence:
not only the intersection of two finitely generated subgroups $ \HH_1, \HH_2 \leqslant \FTA$ can be of infinite rank,
but \emph{even when it is finitely generated}, one can no longer bound the rank of $\HH_1 \cap \HH_2$ in terms of the ranks of the intersecting subgroups. This fact is relevant because it denies any possible extension of the recently proved Hanna Neumann conjecture to groups containing $\Free[2] \times \ZZ$.

Indeed, $\HH_1 \,=\, \gen{\,  x^3 \T^{(6,6)}, yx, y^3 x y^{-2}, \T^{(6p,6p)}}$
 and  $\HH_2 \,=\, \gen{\,x^2 \T^{(4,4)},yxy^{-1}\,}$
are subgroups of
$ \Free[2] \times \gen{(1,1)}\leqslant  \Free[2] \times \ZZ^2$ of ranks $4$ and $2$ respectively (independently from $p$), whereas the intersection $\HH_1 \cap \HH_2$ has rank $p+1$. Moreover, note that by \Cref{rem: neglected parts of product} we can remove $y^3xy^{-2}$ from $\HH_1$ without affecting the intersection; this way we obtain two subgroups of~$\Free[2] \times \ZZ$ of ranks $3$ and $2$ whose intersection has rank $p+1$.

Note that this is the minimum possible sum of ranks for such an example: if one of the intersecting subgroups has rank $1$, then the intersection must be cyclic; if one of the intersecting subgroups is abelian then the intersection has rank at most $2$. It only remains to consider the case of two subgroups of rank $2$ with trivial abelian part.
But then, by \Cref{rem: L1=L2=0},  $\HH_1 \cap \HH_2$ is either non finitely generated or $(\HH_1 \cap \HH_2)\prF = \HH_1 \prF \cap \HH_2 \prF$, and hence has rank bounded by $1(2-1) (2-1) + 1 = 2$. So, the minimum possible ranks of subgroups $\HH_1,\HH_2 \leqslant \Free[2] \times \ZZ$ with intersection of arbitrarily large finite rank are $3$ and $2$, as claimed.
\end{rem}

\section*{Acknowledgements}

The first author was partially supported by CMUP (UID/MAT/00144/2019), which is funded by FCT (Portugal) with national (MCTES) and European structural funds through the programs FEDER, under the partnership agreement PT2020.
Parts of this project were developed during the participation of the first author in the
``Logic and Algorithms in Group Theory'' meeting
held in the
Hausdorff Research Institute for Mathematics (Bonn) in fall 2018.

Both authors acknowledge partial support from the Spanish Agencia Estatal de Investigación, through grant MTM2017-82740-P (AEI/FEDER, UE), and also from the Barcelona Graduate School of Mathematics through the “María de Maeztu” Programme for Units of Excellence in R\&D (MDM-2014-0445).

The work was supported in part by MINECO grant PID2019-107444GA-I00 and the Basque Government grant IT974-16.

\renewcommand*{\bibfont}{\small}
\printbibliography

\Addresses

\end{document}